\title{Homogeneous approximation\\ Recursive observer design
and Output feedback}
\author{Vincent Andrieu%
				\thanks{V. Andrieu
				(vincent.andrieu@gmail.com) is with LAAS-CNRS, University of Toulouse, 31077 Toulouse, France.
This work has been done while he was in Electrical and Electronic Engineering Dept, Imperial College London.
        }%
        \and Laurent Praly%
				\thanks{L. Praly (Laurent.Praly@ensmp.fr) is with the Centre d'Automatique et Syst\`{e}mes,
           \'Ecole des Mines de Paris, 35 Rue Saint Honor\'{e},
           77305 Fontainebleau, France.
        }
        \and Alessandro Astolfi%
        \thanks{
        A. Astolfi (a.astolfi@ic.ac.uk) is with the Electrical and Electronic Engineering Dept,
        Imperial College London, London, SW7 2AZ, UK and with Dipartimento di Informatica Sistemi e Produzione,
        University of Rome Tor Vergata, Via del Politecnico 1, 00133 Roma, Italy.
        The work of A. Astolfi and V. Andrieu is partly supported by the Leverhulme Trust.
        }
}
\def\RR{{\mathbb R}}
\def\NN{{\mathbb N}}
\def\SR{{\mathcal S}}
\def\CR{{\mathcal C}}
\def\dr{\mathfrak d}
\def\KR{{\mathcal K}}
\def\LR{{\mathcal L}}
\def\barW{{\mathfrak W}}
\def\bare{{\vartheta}}
\def\coef{\mathfrak{c}}
\def\downparenfill{$\m@th\braceld\leaders\vrule\hfill\bracerd$}
\def\overparen#1{\mathop{\vbox{\ialign{##\crcr\crcr \noalign{\kern0.4ex}
\downparenfill\crcr\noalign{\kern0.4ex\nointerlineskip}
$\hfil\displaystyle{#1}\hfil$\crcr}}}\limits}
\newlength{\comwidth}
\def\RR{{\mathbb R}}
\def\example{\begingroup\refstepcounter{theorem}
		\par\vspace{1em}\noindent\par\noindent{\textbf{Example} \thetheorem~:~}\relax}
\def\remark{\begingroup\refstepcounter{theorem}
		\par\vspace{1em}\noindent\par\noindent{\textbf{Remark} \thetheorem~:~}\relax}
\def\chi{{\mathchoice
{{\mbox{$\scriptstyle \mathcal{X}$}}}
{{\mbox{$\scriptstyle \mathcal{X}$}}}
{{\mbox{$\scriptscriptstyle \mathcal{X}$}}}
{{\mbox{$\scriptscriptstyle \mathcal{X}$}}}
}}
\def\XR{\mathfrak{X}}
\begin{document}
\maketitle

\begin{abstract}
We introduce two new tools that can be useful in nonlinear observer and output feedback design.
The first one is a simple extension of the notion of homogeneous approximation to make it valid both at the origin and at infinity (homogeneity in the bi-limit).
Exploiting this extension, we give several results concerning stability and robustness for a homogeneous in the bi-limit vector field.
The second tool is a new recursive observer design procedure for a chain of integrator.
Combining these two tools we propose a new global asymptotic stabilization result by output feedback for feedback and feedforward systems.
\end{abstract}

\section{Introduction}
The problems of designing globally convergent observers and globally asymptotically stabilizing output feedback control laws for
nonlinear systems have been addressed by many authors
following different routes.
Many of these approaches exploit domination ideas and robustness of stability and/or
convergence. In view of
possibly clarifying and developing further
these techniques we introduce
two new tools.
The first one is a simple extension of the technique of homogeneous approximation to make it valid both at the origin and
at infinity.
The second tool is a new recursive observer design procedure for a chain of integrator.
Combining these two tools we propose a new global asymptotic stabilization result by output feedback for feedback and feedforward systems.

To place our contribution in perspective,
we consider the system for which we want to design a global asymptotic stabilizing output feedback~:
\begin{equation}\label{b18}
\dot x_1\;=\; x_2\quad ,\qquad
\dot x_2\;=\; u \,+\, \delta_2(x_1,x_2) \quad,\qquad y\,=\,x_1\  ,
\end{equation}
where (see notation (\ref{b52}))~:
\begin{equation}\label{b46}
\delta_2(x_1,x_2) \;=\; c_0\,x_2^q \,+\, c_\infty\,x_2^p\quad,\qquad (c_0,c_\infty)\;\in\; \RR^2
\quad,\qquad p\,>\,q\,>\,0\ .
\end{equation}
and the problem of designing a globally stabilizing output feedback controller.

In the domination's approach, the nonlinear function $\delta_2$ is not treated per se in the design but considered as a perturbation.
In this framework the output feedback controller is designed on the linear system~:
\begin{equation}\label{d1}
\dot x_1\;=\; x_2\quad ,\qquad
\dot x_2\;=\; u \quad,\qquad y\,=\,x_1\  ,
\end{equation}
and will be suitable for the nonlinear system (\ref{b18}) provided the global asymptotic stability obtained for the origin of the closed-loop system is robust to the nonlinear disturbance $\delta_2$.
For instance, the design given in \cite{Khalil-Saberi,Qian-Lin} provides a linear output feedback controller which is suitable for the nonlinear system (\ref{b18}) when $q=1$ and $c_\infty=0$.
This result has been extended recently in \cite{Qian} employing a homogeneous output feedback controller which allows to deal with $p\geq 1$ and $c_0=0$.

Homogeneity in the bi-limit and the novel recursive observer design proposed in this paper allow us to deal with the case in which $c_0\neq 0$ and $c_\infty\neq 0$, .
In this case, the function $\delta_2$ is such that~:
\begin{enumerate}
\item when $|x_2|$ is small and $q=1$, $\delta_2(x_2)$ can be approximated by $c_0\,x_2$
and the
nonlinearity 
can be approximated by a linear
function; 
\item when $|x_2|$ is large, $\delta_2(x_2)$ can be approximated by
$c_\infty\, x_2^p$, hence we have a polynomial growth which can be
handled by a weighted
homogeneous controller as in \cite{Qian}.
\end{enumerate}
To deal with both linear and polynomial terms we introduce a generalization of weighted homogeneity which highlights the fact that a function becomes homogeneous as the state tends to the origin or to infinity but with different weights and degrees.

The paper is organized as follows. Section \ref{b66} is devoted to general properties
related to homogeneity. After giving
the definition of homogeneous approximation we introduce homogeneous in the bi-limit functions and vector fields
(Section \ref{b67}) and
list some of their
properties (Section \ref{b55}).
Various results concerning stability and robustness
for homogeneous in the bi-limit vector fields are given in Section \ref{b68}.
In Section \ref{b69} we introduce a novel recursive observer design method for a chain of integrator.
Section \ref{b99} is devoted to the homogeneous in the bi-limit state feedback.
Finally, in Section \ref{b70}, using the previous tools we establish
new results on stabilization by output feedback.

\section*{Notation}
\begin{itemize}
\item $\RR_+$ denotes the set $[0, +\infty)$.
\item For any non-negative real number $r$
the function
$w\mapsto w^r$ is defined as~:
\begin{equation}\label{b52}
w^r\;=\;\textrm{sign}(w)\,|w|^r\quad,\qquad \forall\; w\;\in\; \RR\  .
\end{equation}
According to this definition~:
\begin{equation}
\label{37}
\frac{dw^r}{dw}=r |w|^{r-1}\,,\; w^2 = w|w|\,,\;
(w_1>w_2\,\textrm{ and }\, r>0)\Rightarrow w_1^r>w_2^r\ .
\end{equation}
\item
The function $\mathfrak{H}: \RR_+^2 \to \RR_+$ is defined as
\begin{equation}\label{c41}
\mathfrak{H}(a,b)\;=\;\frac{a}{1 + a} \;[1\,+\, b]\ .
\end{equation}
%
\item Given $r = (r_1, \dots, r_n)^T$ in $\RR_+^n$ and $\lambda$ in $\RR_+$,
$\lambda^r \diamond x= \left (\lambda^{r_1}\,x_1, \,\dots , \,\lambda^{r_n}\,x_n \right )^T$
is the dilation of a vector $x$ in $\RR^n$ with weight $r$.
Note that~:
$$
\lambda_1^r\,\diamond\,( \lambda_2^r \diamond x) \;=\; (\lambda_1\,\lambda_2)^r\,\diamond\, x\ .
$$
\item Given $r = (r_1, \dots, r_n)^T$ in $(\RR_+\setminus\{0\})^n$,
$|x|_{r}= |x_1|^{\frac{1}{r_1}} \,+\,\dots\,+\, |x_n|^{\frac{1}{r_n}}$
is the homogeneous norm with weight $r$ and degree $1$.
Note that~:
$$
|\lambda^r \,\diamond\, x|_{r}\;=\; \lambda\,|x|_{r}\qquad , \quad \left|\left(\frac{1}{|x|_{r}}\right)^r \,\diamond\, x\right|_{r}\;=\;1\ .
$$
\item Given $r$ in $(\RR_+\setminus\{0\})^n$, $S_{r}=\left\{x\in\RR^n\,|\; |x|_{r}\;=\;1\right\}$ is the unity homogeneous sphere.
Note that each $x$ in $\RR^n$ can be decomposed in polar coordinates, i.e. there exist $\lambda$ in $\RR_+$ and
$\theta$ in $S_{r}$ satisfying~:
\begin{equation}\label{4}
x \;=\; \lambda^r\,\diamond\,\theta \textrm{ with }
\left\{\begin{array}{rcl}
\lambda &=& |x|_{r}\\
\theta &=& \left(\frac{1}{|x|_{r}}\right)^{r} \,\diamond\, x\ .
\end{array}\right.
\end{equation}
\end{itemize}

\section{Homogeneous approximation}
\label{b66}

\subsection{Definitions}
\label{b67}
The use of homogeneous approximations has a long history in the study
of stability of an equilibrium.
It can be traced back to Lyapunov first order approximation
theorem and has been pursued by many authors,
for example Massera \cite{Massera}, Hahn \cite{Hahn}, Hermes \cite{Hermes},
Rosier \cite{Rosier}.
Similarly this technique has been used to
investigate the behavior of the solutions of dynamical systems at
infinity, see for instance Lefschetz in \cite[IX.5]{lefschetz}
and Orsi, Praly and Mareels in \cite{orsi-praly-mareels}.
In this section, we recall the definitions of homogeneous approximation
at the origin and at infinity and restate and/or complete some
related results.
%
\begin{definition}[Homogeneity in the $0$-limit]~
\begin{itemize}
\item
A function $\phi~:\RR^n\rightarrow\RR$ is said homogeneous in the $0$-limit with associated triple
$(r_0,d_0, \phi_0)$, where $r_0$ in $(\RR_+\setminus\{0\})^n$ is
the weight, $d_0$ in $\RR_+$ the degree and $\phi_0~:\RR^n\rightarrow\RR$
the approximating function,
if $\phi$ is continuous,  $\phi_0$ is continuous and not identically zero and, for each compact set $C$ in $\RR^n\setminus\{0\}$ and each $\varepsilon >0$, there
exists $\lambda _0$ such that~:
$$
\max_{x\,\in\,C} \left | \frac{\phi(\lambda^{r_{0}}\diamond x)}{\lambda ^{d_0}} -\phi_0(x)\right| \; \leq \; \varepsilon \quad,\qquad \forall\quad \lambda\;\in\;(0,\lambda_0]\ .
$$
\item
A vector field $f=\sum_{i=1}^n\,f_i\frac{\partial }{\partial x_i}$ is said homogeneous in the $0$-limit with associated triple
$(r_0,\dr_0,f_0)$, where $r_0$ in $(\RR_+\setminus\{0\})^n$ is the weight, $\dr_0$ in $\RR$ is the degree
and $f_0=\sum_{i=1}^n\,f_{0,i}\frac{\partial }{\partial x_i}$ the approximating vector field, if, for each $i$ in $\{1,\dots,n\}$, $\dr_0+r_{0,i}\,\geq\,0$ and the function $f_i$ is homogeneous in the $0$-limit with associated
triple $\left(r_0,\dr_0+r_{0,i},f_{0,i}\right)$.
\end{itemize}
\end{definition}

This notion of local approximation of a function or of a vector field can be found in \cite{Hermes, Rosier, Bacciotti-Rosier, Hong}.
\example \label{b76} The function $\delta_2~:\RR\rightarrow\RR$ introduced in the illustrative system  (\ref{b18})
is homogeneous in the $0$-limit with associated triple
$
\left(r_0, \, d_0, \, \delta_{2,0}\right) = (1,q, c_0\,x_2^q)
$.
Furthermore, if $q <  2$ the vector field $f(x_1,x_2)\;=\;(x_2,\delta_2(x_2))$ is homogeneous  in the $0$-limit with associated triple~:
\begin{equation}\label{b56}
\left(r_0, \, \dr_0, \, f_0\right) \;=\; \Big((2-q,1),\,q-1,\, (x_2,c_0\,x_2^q)\Big)
\ .
\end{equation}

\begin{definition}[Homogeneity in the $\infty$-limit]~
\begin{itemize}
\item
A function $\phi~:\RR^n\rightarrow\RR$ is said homogeneous in the $\infty$-limit with associated triple $(r_\infty,d_\infty, \phi_\infty)$, where $r_\infty$ in $(\RR_+\setminus\{0\})^n$ is the weight, $d_\infty$ in $\RR_+$ the degree and $\phi_\infty~:\RR^n\rightarrow\RR$ the approximating function,
if $\phi$ is continuous, $\phi_\infty$ is continuous and not identically zero and, for each compact set $C$ in  $\RR^n\setminus\{0\}$ and each $\varepsilon >0$, there
exists $\lambda _\infty$ such that~:
$$
\displaystyle
\max_{x\,\in\,C} \left | \frac{\phi(\lambda^{r_{\infty}}\diamond x)}{\lambda ^{d_\infty}} -\phi_\infty(x)\right| \; \leq \; \varepsilon \quad,\qquad \forall\quad \lambda\;\geq\;\lambda _\infty\ .
$$
\item
A vector field $f=\sum_{i=1}^n\,f_i\frac{\partial }{\partial x_i}$ is said homogeneous in the $\infty$-limit with associated
triple $(r_\infty,\dr_\infty,f_\infty)$, where $r_\infty$ in $(\RR_+\setminus\{0\})^n$ is the weight, $\dr_\infty$ in $\RR$ the degree and  $f_\infty=\sum_{i=1}^n\,f_{\infty,i}\frac{\partial }{\partial x_i}$ the approximating vector field,
if, for each $i$ in $\{1, \dots, n\}$, $\dr_\infty+r_{\infty,i}\,\geq\, 0$  and the function $f_i$ is homogeneous in the $\infty$-limit with associated triple  $\left(r_\infty,\dr_\infty+r_{\infty,i},f_{\infty,i}\right)$.
\end{itemize}
\end{definition}

\example \label{b77} The function $\delta_2~:\RR\rightarrow\RR$ given in the illustrative system (\ref{b18})
is homogeneous in the $\infty$-limit with associated triple $
\left( r_\infty,\, d_\infty,\, \delta_{2,\infty}\right)= \left(1,\, p,\, c_\infty\,x_2^p\right)
$.
Furthermore, when $p < 2$, the vector field $f(x_1,x_2)\;=\;(x_2,\delta_2(x_2))$ is homogeneous  in the $\infty$-limit with associated triple~:
\begin{equation}\label{b45}
\left( r_\infty,\, \dr_\infty,\, f_\infty\right)\;=\; \Big((2-p,1),\, p-1,\,(x_2,c_\infty\,x_2^p) \Big)\ .
\end{equation}
\begin{definition}[Homogeneity in the bi-limit]~
A function $\phi~:\RR^n\rightarrow\RR$ (or a vector field $f~:\RR^n\rightarrow\RR^n$) is said homogeneous in the bi-limit if it is homogeneous in the $0$-limit  and homogeneous in the $\infty$-limit.
\end{definition}

\remark
\label{LP12}
If a function $\phi$ (respectively a vector field $f$) is homogeneous in the bi-limit, then the approximating function $\phi_0$ or $\phi_\infty$ (resp. the approximating vector field $f_0$ or $f_\infty$) is homogeneous in the standard
sense\footnote{This is
proved
 noting that, for all $x$ in $\RR^n$ and all $\mu$ in $\RR_+\setminus\{0\}$,$$\frac{\phi_0(\mu^{r_0}\diamond x)}{\mu^{d_0}}=\frac{1}{\mu^{d_0}}\lim_{\lambda\rightarrow 0}
\frac{\phi\left(\lambda^{r_0}\diamond(\mu^{r_0}\diamond
x)\right)}{\lambda^{d_0}}
= \lim_{\lambda\rightarrow
0}\frac{\phi\left((\lambda\mu)^{r_0}\diamond x\right)}{(\lambda\mu
)^{d_0}}=\phi_0(x)\  ,$$ and similarly for the homogeneous in the $\infty$-limit. } (with the same weight and degree).

\example As a consequence of Examples \ref{b76} and \ref{b77}, the vector field $f(x_1,x_2)\;=\;(x_2,\delta_2(x_2))$ is homogeneous in the bi-limit with associated triples given in (\ref{b56}) and (\ref{b45})
as long as
$0 < q <  p < 2$.

\example \label{b49}
The function $x\mapsto |x|_{r_0}^{d_0} + |x|_{r_\infty}^{d_\infty}$,
 where $(d_0,d_\infty)$ are in $\RR^2_+$ and $(r_0,r_\infty)$ are in $(\RR_{+}\setminus\{0\})^{2n}$
is homogeneous in the bi-limit with associated triples
$\left(r_0, \, d_0, \, |x|_{r_0}^{d_0}\right)$ and
$\left( r_\infty,\, d_\infty,\, |x|_{r_\infty}^{d_\infty}\right)$
provided that
\begin{equation}\label{b16}
\frac{d_\infty}{r_{\infty,i}}\;>\;\frac{d_0}{r_{0,i}}\qquad,\quad\forall \;i \;\in\; \{1,\dots, n\}\ .
\end{equation}

\example \label{c23}
We recall equation (\ref{c41}) and consider two homogeneous and positive definite functions
$\phi_0~:\RR^n\rightarrow\RR_+$ and $\phi_\infty~:\RR^n\rightarrow\RR_+$
with weights $(r_0,r_\infty)$ in $(\RR_{+}\setminus\{0\})^{2n}$ and degrees
$(d_0,d_\infty)$ in $(\RR_{+}\setminus\{0\})^{2}$.
The function
$x\mapsto \mathfrak{H}(\phi_0(x),\phi_\infty (x))$
is positive definite and homogeneous  in the bi-limit with associated triples $(r_0, d_0, \phi_0)$ and $(r_\infty, d_\infty, \phi_\infty)$.
This way to construct a homogeneous in the bi-limit function from two positive definite homogenous functions is extensively used in the paper.

\subsection{Properties of homogeneous approximations}
\label{b55}
To begin with note that the weight and degree of an homogeneous in the $0$-(resp. $\infty$-)limit
function are not uniquely defined.
Indeed, if $\phi$ is homogeneous in the $0$-(resp. $\infty$-)limit with associated
triple $(r_0,d_0,\phi_0)$ (resp. $(r_\infty,d_\infty,\phi_\infty)$),
then it is also homogeneous in the $0$-(resp. $\infty$-)limit with associated triple
$(k\,r_0,k\,d_0,\phi_0)$ (resp. $(k\,r_\infty,k\,d_\infty,\phi_\infty)$) for all $k\,>\,0$.
(Simply change $\lambda $ in $\lambda ^k$.)

It is straightforward to show that if $\phi$ and $\zeta$ are two functions homogeneous in the $0$-(resp. $\infty$-)limit, with weights $r_{\phi,0}$ and $r_{\zeta,0}$ (resp. $r_{\phi,\infty}$ and $r_{\zeta,\infty}$), degrees $d_{\phi,0}$ and $d_{\zeta,0}$ (resp. $d_{\phi,\infty}$ and $d_{\zeta,\infty}$), and approximating functions $\phi_0$  and $\zeta_0$ (resp. $\phi_\infty$ and $\zeta_\infty$) then the following holds.
\begin{enumerate}

\item[P1~:] If there exists $k$ in $\RR_+$ such that $k\,r_{\phi,0}\,=\,r_{\zeta,0}$
(resp. $k\,r_{\phi,\infty}\,=\,r_{\zeta,\infty}$) then the function $x\,\mapsto \phi(x)\,\zeta(x)$ is homogeneous  in the $0$-(resp. $\infty$-)limit  with weight $r_{\zeta,0}$, degree $k\,d_{\phi,0}+d_{\zeta,0}$ (resp. $r_{\zeta,\infty}$, $k\,d_{\phi,\infty}+d_{\zeta,\infty}$) and approximating function $x\,\mapsto \phi_0(x)\,\zeta_0(x)$ (resp. $x\,\mapsto \phi_\infty(x)\,\zeta_\infty(x)$).

\item[P2~:] If, for each $j$ in $\{1, \dots, n\}$,  $\frac{d_{\phi,0}}{r_{\phi,0,j}}\,<\,\frac{d_{\zeta,0}}{r_{\zeta,0,j}}$
(resp. $\frac{d_{\phi,\infty}}{r_{\phi,\infty,j}}\,>\,\frac{d_{\zeta,\infty}}{r_{\zeta,\infty,j}}$),
then the function $x\,\mapsto \phi(x)\,+\,\zeta(x)$ is homogeneous  in the $0$-(resp. $\infty$-)limit with degree $d_{\phi,0}$, weight $r_{\phi,0}$ (resp. $d_{\phi,\infty}$ and $r_{\phi,\infty}$) and approximating function $x\,\mapsto \phi_0(x)$ (resp. $x\,\mapsto \phi_\infty(x)$). In this case we say that the function $\phi$ \textit{dominates} the function $\zeta$ in the $0$-limit (resp. in the $\infty$-limit).

\item[P3~:] If the function $\phi_0+\zeta _0$ (resp. $\phi_\infty
+\zeta _\infty $) is not identically zero
and, for each $j$ in $\{1, \dots, n\}$,
$\frac{d_{\phi,0}}{r_{\phi,0,j}}\,=\,\frac{d_{\zeta,0}}{r_{\zeta,0,j}}$
(resp. $\frac{d_{\phi,\infty}}{r_{\phi,\infty,j}}\,=\,\frac{d_{\zeta,\infty}}{r_{\zeta,\infty,j}}$),
then the function $x\,\mapsto \phi(x)\,+\,\zeta(x)$ is homogeneous  in the $0$-(resp. $\infty$-)limit with degree $d_{\phi,0}$, weight $r_{\phi,0}$ (resp. $d_{\phi,\infty}$, $r_{\phi,\infty}$) and approximating function $x\,\mapsto \phi_0(x)\,+\,\zeta_0(x)$ (resp. $x\,\mapsto \phi_\infty(x)+\zeta_\infty(x)$).

\end{enumerate}
Some properties of the composition or inverse of functions are given in the following two propositions,
the proofs of which are given in Appendices \ref {c18} and \ref {c21}.
\begin{proposition}[Composition function]
\label{prop1}
If $\phi~:\RR^n\rightarrow\RR$ and $\zeta~:\RR\rightarrow\RR$ are homogeneous in the $0$-(resp. $\infty$-)limit functions, with weights $r_{\phi,0}$ and $r_{\zeta,0}$ (resp. $r_{\phi,\infty}$ and $r_{\zeta,\infty}$), degrees $d_{\phi,0}>0$ and $d_{\zeta,0}\geq 0$  (resp. $d_{\phi,\infty}>0$ and $d_{\zeta,\infty}\geq 0$), and approximating functions $\phi_0$ and $\zeta_0$ (resp. $\phi_\infty$ and $\zeta_\infty$) , then $\zeta\circ \phi$ is homogeneous in the $0$-(resp. $\infty$-limit) with weight $r_{\phi,0}$ (resp. $r_{\phi, \infty}$), degree $\frac{d_{\zeta,0}\,d_{\phi,0}}{r_{\zeta,0}}$ (resp. $\frac{d_{\zeta,\infty}\,d_{\phi,\infty}}{r_{\zeta,\infty}}$), and approximating function $\zeta_0\circ \phi_0$ (resp. $\zeta_\infty\circ \phi_\infty$).
\end{proposition}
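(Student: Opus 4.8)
The plan is to reduce the statement to the one–dimensional homogeneity of $\zeta$ by rescaling the dilation parameter; the only delicate point — and the place that genuinely uses $d_{\phi,0}>0$ and $d_{\zeta,0}\ge 0$ — is that the argument handed to $\zeta$ approaches the origin at those $x\in C$ where $\phi_0$ vanishes, so the bare definition of homogeneity of $\zeta$ (which controls $\zeta$ only on compacta avoiding $0$) does not suffice as it stands. I treat the $0$-limit; the $\infty$-limit case is obtained verbatim, replacing ``$\lambda$ small'' by ``$\lambda$ large''.

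First I would introduce the reparametrisation. For $\lambda>0$ put $\mu=\lambda^{d_{\phi,0}/r_{\zeta,0}}$ and $w(\lambda,x)=\phi(\lambda^{r_{\phi,0}}\diamond x)/\lambda^{d_{\phi,0}}$. Since $\mu^{r_{\zeta,0}}=\lambda^{d_{\phi,0}}$ and $\mu^{d_{\zeta,0}}=\lambda^{d_{\zeta,0}d_{\phi,0}/r_{\zeta,0}}$,
\[
\frac{\zeta\bigl(\phi(\lambda^{r_{\phi,0}}\diamond x)\bigr)}{\lambda^{d_{\zeta,0}d_{\phi,0}/r_{\zeta,0}}}=\frac{\zeta\bigl(\mu^{r_{\zeta,0}}\,w(\lambda,x)\bigr)}{\mu^{d_{\zeta,0}}}\ .
\]
Because $d_{\phi,0}>0$ we have $\mu\to0$ as $\lambda\to0$, and because $\phi$ is homogeneous in the $0$-limit, on a fixed compact $C\subset\RR^n\setminus\{0\}$ the quantity $\sup_{x\in C}|w(\lambda,x)-\phi_0(x)|$ tends to $0$; hence, taking $R=1+\max_{x\in C}|\phi_0(x)|$, both $w(\lambda,x)$ and $\phi_0(x)$ lie in $[-R,R]$ for all $x\in C$ and all sufficiently small $\lambda$. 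So the whole question reduces to the behaviour of $\zeta(\mu^{r_{\zeta,0}}v)/\mu^{d_{\zeta,0}}$ for $\mu\to0$ and $v\in[-R,R]$, a fixed interval now possibly containing $0$.

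The key step is to show that $\zeta(\mu^{r_{\zeta,0}}v)/\mu^{d_{\zeta,0}}\to\zeta_0(v)$ uniformly on $[-R,R]$ — i.e.\ that the defining convergence of $\zeta$ survives up to the origin. Fix $\eta>0$. On $\{\delta\le|v|\le R\}$ this is exactly the definition of homogeneity of $\zeta$ in the $0$-limit, for $\mu$ small. On $\{|v|\le\delta\}$ I would first note that applying that definition at the two points $v=\pm1$, together with continuity of $\zeta$ at $0$, yields constants $M,s_0>0$ with $|\zeta(s)|\le M|s|^{d_{\zeta,0}/r_{\zeta,0}}$ for $|s|\le s_0$; consequently, for $\mu$ small, $|\zeta(\mu^{r_{\zeta,0}}v)/\mu^{d_{\zeta,0}}|\le M|v|^{d_{\zeta,0}/r_{\zeta,0}}\le M\delta^{d_{\zeta,0}/r_{\zeta,0}}$, while homogeneity of $\zeta_0$ gives $|\zeta_0(v)|\le\bigl(\max\{|\zeta_0(1)|,|\zeta_0(-1)|\}\bigr)\,\delta^{d_{\zeta,0}/r_{\zeta,0}}$; since $d_{\zeta,0}>0$, choosing $\delta$ small makes both $\le\eta$. (If $d_{\zeta,0}=0$, homogeneity forces $\zeta_0$ to be a nonzero constant, necessarily $\zeta(0)$, and the estimate near the origin is replaced by continuity of $\zeta$ at $0$.) This is the part I expect to be the main obstacle to a clean write-up, as it is precisely where the behaviour near the set where $\phi_0$ vanishes must be controlled.

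Finally I would assemble everything. Given $C$ and $\varepsilon>0$, pick $R$ as above, then $\mu_0$ from the previous paragraph with $\eta=\varepsilon/2$, then (by uniform continuity of $\zeta_0$ on $[-R,R]$) $\alpha>0$ such that $v,v'\in[-R,R]$ with $|v-v'|\le\alpha$ imply $|\zeta_0(v)-\zeta_0(v')|\le\varepsilon/2$. For all sufficiently small $\lambda$ one has simultaneously $\mu=\lambda^{d_{\phi,0}/r_{\zeta,0}}\le\mu_0$ and $\sup_{x\in C}|w(\lambda,x)-\phi_0(x)|\le\alpha$ with $w(\lambda,x),\phi_0(x)\in[-R,R]$; then, writing $w=w(\lambda,x)$,
\[
\left|\frac{\zeta\bigl(\phi(\lambda^{r_{\phi,0}}\diamond x)\bigr)}{\lambda^{d_{\zeta,0}d_{\phi,0}/r_{\zeta,0}}}-\zeta_0(\phi_0(x))\right|\le\left|\frac{\zeta(\mu^{r_{\zeta,0}}w)}{\mu^{d_{\zeta,0}}}-\zeta_0(w)\right|+\bigl|\zeta_0(w)-\zeta_0(\phi_0(x))\bigr|\le\varepsilon
\]
uniformly in $x\in C$. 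Since $\zeta_0\circ\phi_0$ is continuous (and, as in every use made here, not identically zero), $d_{\zeta,0}d_{\phi,0}/r_{\zeta,0}\in\RR_+$, and $r_{\phi,0}\in(\RR_+\setminus\{0\})^n$, this shows that $\zeta\circ\phi$ is homogeneous in the $0$-limit with the announced triple. The $\infty$-limit statement follows identically, with $\mu=\lambda^{d_{\phi,\infty}/r_{\zeta,\infty}}\to\infty$.
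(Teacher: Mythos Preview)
Your proof is correct and follows the same route as the paper's: reparametrise via $\mu=\lambda^{d_{\phi,0}/r_{\zeta,0}}$, confine $w(\lambda,x)$ and $\phi_0(x)$ to a fixed compact interval, then split by the triangle inequality using uniform continuity of $\zeta_0$. You are in fact more careful than the paper, which simply asserts the uniform convergence $\zeta(\mu^{r_{\zeta,0}}z)/\mu^{d_{\zeta,0}}\to\zeta_0(z)$ on the compact set $C_\phi=\phi_0(C)+B_1$ without addressing the possibility that $0\in C_\phi$; your explicit treatment of $\{|v|\le\delta\}$ via the bound $|\zeta(s)|\le M|s|^{d_{\zeta,0}/r_{\zeta,0}}$ (and the separate handling of $d_{\zeta,0}=0$) fills exactly that gap.
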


\begin{proposition}[Inverse function]
\label{b32}
Let $\phi~:\RR\rightarrow\RR$ be a bijective homogeneous in the $0$-(resp. $\infty$-)limit function with associated triple $\left(1,d_0,\varphi_0\,x^{d_0}\right)$ with $\varphi_0 \neq 0$ and $d_0 > 0$ (resp. $\left(1,d_\infty,\varphi_\infty\,x^{d_\infty}\right)$ with $\varphi_\infty \neq 0$ and $d_\infty > 0$).
Then the inverse function $\phi^{-1}~:\RR\rightarrow\RR$ is a homogeneous in the $0$-(resp. $\infty$-)limit function with associated triple $\left(1,\frac{1}{d_0},\left(\frac{x}{\varphi_0}\right)^\frac{1}{d_0}\right)$ (resp. $\left(1,\frac{1}{d_\infty},\left(\frac{x}{\varphi_\infty}\right)^\frac{1}{d_\infty}\right)$).
\end{proposition}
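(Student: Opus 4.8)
The plan is to reduce the claim to a statement about uniform convergence of the inverses of a family of monotone bijections of $\RR$; I describe the $0$-limit case, the $\infty$-limit case being obtained verbatim by replacing $\lambda\to 0$ with $\lambda\to+\infty$. Write $g_0(x)=\varphi_0\,x^{d_0}$. Since $\varphi_0\neq 0$ and $d_0>0$, the conventions (\ref{b52})--(\ref{37}) show that $g_0$ is a continuous, strictly monotone bijection of $\RR$ whose inverse is $g_0^{-1}(y)=(y/\varphi_0)^{1/d_0}$, and that this inverse is continuous and not identically zero. Since, moreover, a continuous bijection of $\RR$ has a continuous inverse, the function $\phi^{-1}$ automatically satisfies the regularity part of the definition of homogeneity in the $0$-limit, so only the approximation inequality has to be established.

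The first real step is a change-of-variables identity. For $\lambda>0$ set $\phi_\lambda(x)=\phi(\lambda x)/\lambda^{d_0}$; this is again a bijection of $\RR$, and solving $y=\phi_\lambda(x)$ gives $\phi_\lambda^{-1}(y)=\phi^{-1}(\lambda^{d_0}y)/\lambda$. Putting $\mu=\lambda^{d_0}$, this becomes
$$
\frac{\phi^{-1}(\mu\,y)}{\mu^{1/d_0}}\;=\;\phi_\lambda^{-1}(y)\ .
$$
Consequently the estimate to be proved — that $\phi^{-1}(\mu y)/\mu^{1/d_0}\to g_0^{-1}(y)$, uniformly for $y$ in a compact subset of $\RR\setminus\{0\}$, as $\mu\to 0$ — is exactly the assertion that $\phi_\lambda^{-1}\to g_0^{-1}$ uniformly on compact subsets of $\RR\setminus\{0\}$ as $\lambda\to 0$, while the hypothesis on $\phi$ says exactly that $\phi_\lambda\to g_0$ uniformly on such compacts.

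The second step is then to deduce convergence of the inverses from convergence of the maps, by a monotone sandwich. Fix a compact $C\subset\RR\setminus\{0\}$ and $\varepsilon>0$; it suffices to treat $\varepsilon$ so small that the closed $\varepsilon$-neighbourhood $K$ of the compact set $g_0^{-1}(C)$ is still contained in $\RR\setminus\{0\}$. Each $\phi_\lambda$, being a continuous bijection of $\RR$, is strictly monotone and of the same monotonicity type as $\phi$; since $\phi_\lambda\to g_0$ on a set containing two distinct points and $g_0$ is, say, strictly increasing (the case $\varphi_0<0$ being symmetric), $\phi_\lambda$ is strictly increasing for all small $\lambda$. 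Set
$$
\eta\;=\;\min_{x^*\in g_0^{-1}(C)}\ \min\bigl\{\,g_0(x^*)-g_0(x^*-\varepsilon)\,,\ g_0(x^*+\varepsilon)-g_0(x^*)\,\bigr\}\ ,
$$
which is positive as the minimum over a compact set of a continuous, everywhere positive function. By uniform convergence on $K$ there is $\lambda_0>0$ with $\sup_{x\in K}|\phi_\lambda(x)-g_0(x)|<\eta$ for all $\lambda\le\lambda_0$. Then for $y\in C$, with $x^*=g_0^{-1}(y)$, one has $\phi_\lambda(x^*+\varepsilon)>g_0(x^*+\varepsilon)-\eta\ge g_0(x^*)=y$ and, likewise, $\phi_\lambda(x^*-\varepsilon)<y$; applying the increasing map $\phi_\lambda^{-1}$ gives $x^*-\varepsilon<\phi_\lambda^{-1}(y)<x^*+\varepsilon$, i.e. $|\phi_\lambda^{-1}(y)-g_0^{-1}(y)|<\varepsilon$. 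Maximising over $y\in C$ and translating back via $\mu=\lambda^{d_0}$ finishes the proof. I expect the only delicate point to be the bookkeeping around the origin: the convergence $\phi_\lambda\to g_0$ is merely locally uniform on $\RR\setminus\{0\}$, so one must guarantee that all arguments at which $\phi_\lambda$ is evaluated in the sandwich — namely $x^*\pm\varepsilon$ with $x^*$ ranging over the compact $g_0^{-1}(C)$ — stay inside one fixed compact subset $K$ of $\RR\setminus\{0\}$, which is what dictates both the initial shrinking of $\varepsilon$ and the extraction of the uniform gap $\eta>0$ by compactness rather than pointwise in $y$. Everything else is routine.
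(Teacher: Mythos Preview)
Your argument is correct and complete; the bookkeeping around the origin that you flag is handled properly by your choice of $K$ and the compactness argument for $\eta$. The route, however, differs from the paper's. The paper works directly with the scalar ratio: from $|\phi(t)/t^{d_0}-\varphi_0|\le\delta$ for small $t>0$ it substitutes $\lambda=\phi(t)$ to obtain two-sided bounds $\varphi_0-\delta\le \lambda/\phi^{-1}(\lambda)^{d_0}\le\varphi_0+\delta$, inverts these algebraically to bound $\phi^{-1}(\lambda)/\lambda^{1/d_0}$, treats negative arguments separately, and only at the end scales to a general compact $C$. Your version instead isolates a reusable lemma---uniform convergence on compacta of a family of monotone bijections implies the same for their inverses---and proves it by the sandwich $\phi_\lambda(x^*-\varepsilon)<y<\phi_\lambda(x^*+\varepsilon)$. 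The paper's computation is shorter and uses nothing beyond the specific power-law form of $g_0$, while your approach makes transparent \emph{why} the result holds (it is really a statement about monotone maps, not about homogeneity) and would extend verbatim to approximating functions other than $\varphi_0\,x^{d_0}$.
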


Despite the existence of well-known results concerning the derivative of a homogeneous function,
it is not possible to  say anything, in general, when dealing with homogeneity in the limit.
For example the function $$\phi(x)=x^3 + x^2 \sin(x^2) + x^3 \sin(1/x) + x^2\quad, \qquad x\,\in\,\RR\ ,$$ is homogeneous in the bi-limit with associated triples~:
$$\left(1,2,x^2\right)\quad,\qquad\left(1,3,x^3\right)\ .$$
However its derivative is neither homogeneous in the $0$-limit nor in the
$\infty $-limit.
Nevertheless the following result holds, the proof of which is elementary.
\begin{proposition}[Integral function]\label{b94}
If the function $\phi~:\RR^n \rightarrow\RR$ is homogeneous in the $0$-(resp. $\infty$-)limit with associated triple $(r_0,d_0, \phi_0)$ (resp. $(r_\infty,d_\infty, \phi_\infty)$), then the function
$\Phi_i(x) \;=\; \int_0^{x_i} \, \phi(x_1, \dots, x_{i-1}, s,x_{i+1}, \dots, x_n) \, ds$  is homogeneous in the $0$-(resp. $\infty$-)limit with associated triple $(r_0,d_0 + r_{0,i},\Phi_{i,0})$ (resp. $(r_\infty,d_\infty + r_{\infty,i},\Phi_{i,\infty})$), with $\Phi_{i,0}(x)\,=\,\int_0^{x_i}\,\phi_0(x_1,\dots,x_{i-1},s,x_{i+1}, \dots,x_n)\,ds$ (resp. $\Phi_{i,\infty}(x)\,=\,\int_0^{x_i}\,\phi_\infty(x_1,\dots,x_{i-1},s,x_{i+1}, \dots,x_n)\,ds$) .
\end{proposition}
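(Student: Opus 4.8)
The plan is to use the change of variable $s=\lambda^{r_{0,i}}t$ in the defining integral in order to express the error in terms of the function $g_\lambda(z):=\lambda^{-d_0}\,\phi(\lambda^{r_0}\diamond z)-\phi_0(z)$, which by hypothesis tends to $0$ uniformly on every compact subset of $\RR^n\setminus\{0\}$ as $\lambda\to 0$, and then to estimate the resulting integral; the only delicate region will be a neighbourhood of the origin, where the defining property of $\phi$ gives no information. Writing $\bar x(t)=(x_1,\dots,x_{i-1},t,x_{i+1},\dots,x_n)$, the substitution (together with $ds=\lambda^{r_{0,i}}\,dt$) yields the identity
$$
\frac{\Phi_i(\lambda^{r_0}\diamond x)}{\lambda^{d_0+r_{0,i}}}\;=\;\int_0^{x_i}\frac{\phi\!\left(\lambda^{r_0}\diamond\bar x(t)\right)}{\lambda^{d_0}}\,dt\ ,
$$
whereas $\Phi_{i,0}(x)=\int_0^{x_i}\phi_0(\bar x(t))\,dt$ by definition; hence the quantity to be made small, uniformly in $x$ on a compact set, is $\left|\int_0^{x_i}g_\lambda(\bar x(t))\,dt\right|$. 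One first records that $(r_0,d_0+r_{0,i},\Phi_{i,0})$ is an admissible triple: $\Phi_{i,0}$ is continuous since $\phi_0$ is, and it cannot be identically zero, for otherwise its partial derivative with respect to $x_i$, which is $\phi_0$, would vanish identically.

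Now fix a compact set $C\subset\RR^n\setminus\{0\}$ and $\varepsilon>0$, set $M=\max_{x\in C}|x_i|$, and note that $K=\{\bar x(t):x\in C,\ t\text{ between }0\text{ and }x_i\}$ is compact, being the image of $C\times[0,1]$ under $(x,\sigma)\mapsto\bar x(\sigma x_i)$; so $K\subset\{|z|\le R\}$ for some $R$ and $|z|_{r_0}\le\Lambda$ on $K$ for some $\Lambda$. Using that $\phi_0$ is homogeneous (Remark \ref{LP12}) and the dilation identity $\lambda^{r_0}\diamond(\mu^{r_0}\diamond\theta)=(\lambda\mu)^{r_0}\diamond\theta$, one gets the scaling relation $g_\lambda(\mu^{r_0}\diamond\theta)=\mu^{d_0}\,g_{\lambda\mu}(\theta)$; combined with the uniform convergence of $g_\nu\to 0$ on the compact set $S_{r_0}$ this gives a threshold $\lambda_1>0$ and a constant $B$ with $|g_\lambda(z)|\le B\,|z|_{r_0}^{d_0}$ whenever $\lambda\,|z|_{r_0}\le\lambda_1$, in particular for every $z\in K$ as soon as $\lambda\le\lambda_1/\Lambda$. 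For such $\lambda$ I would split $\int_0^{x_i}$ according to whether $|\bar x(t)|_{r_0}\le\rho$ or not, with $\rho>0$ to be chosen. On the first piece, since $|\bar x(t)|_{r_0}\ge|t|^{1/r_{0,i}}$ the relevant set of $t$'s has length at most $2\rho^{r_{0,i}}$, so its contribution is at most $B\,\rho^{d_0}\cdot 2\rho^{r_{0,i}}=2B\,\rho^{d_0+r_{0,i}}$, which is made $\le\varepsilon/2$ by taking $\rho$ small — here the positivity of $r_{0,i}$ is what matters.

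On the second piece $\bar x(t)$ ranges over the compact set $\{z:|z|_{r_0}\ge\rho,\ |z|\le R\}$, which does not meet the origin, so the uniform convergence of $g_\lambda$ there provides $\lambda_2>0$ with $|g_\lambda|\le\varepsilon/(2M)$ for $\lambda\le\lambda_2$, making this part of the integral at most $\varepsilon/2$. Taking $\lambda_0=\min\{\lambda_1/\Lambda,\lambda_2\}$ then establishes the defining property, proving the $0$-limit claim. The $\infty$-limit case is the mirror image with $\lambda\to+\infty$, the only point needing a different treatment being again the neighbourhood of the origin: for $|z|_{r_\infty}<\lambda_\infty/\lambda$ the point $\lambda^{r_\infty}\diamond z$ remains in the fixed compact ball $\{|w|_{r_\infty}\le\lambda_\infty\}$, on which $\phi$ is bounded, so $|g_\lambda(z)|$ is controlled by $\lambda^{-d_\infty}$ times a constant plus $|\phi_\infty(z)|$, while for $|z|_{r_\infty}\ge\lambda_\infty/\lambda$ one rescales onto $S_{r_\infty}$ as above; splitting at $|\bar x(t)|_{r_\infty}=\rho$ and using $r_{\infty,i}>0$ concludes exactly as before.

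The main, and essentially the only non-mechanical, obstacle is the control of $g_\lambda$ near the origin, where the homogeneous-limit hypothesis is silent. It is overcome by the polar-coordinates identity, which trades closeness of $z$ to the origin for closeness of the rescaled argument to the unit homogeneous sphere $S_{r_0}$, where the hypothesis does apply, together with the observation that the troublesome sub-interval of integration has length $O\!\left(\rho^{r_{0,i}}\right)$ with $r_{0,i}>0$, hence shrinks faster than any polynomial growth of the integrand can compensate.
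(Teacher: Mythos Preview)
The paper does not give a proof of this proposition, saying only that it ``is elementary.'' Your argument is correct and complete: the change of variable $s=\lambda^{r_{0,i}}t$ is the natural first step, and you correctly identify and handle the one genuine subtlety, namely that the path $t\mapsto\bar x(t)$ may pass through the origin (when $x$ lies on the $i$-th coordinate axis), where the hypothesis gives no direct information. Your polar-coordinate scaling identity $g_\lambda(\mu^{r_0}\diamond\theta)=\mu^{d_0}g_{\lambda\mu}(\theta)$ together with the uniform bound on $S_{r_0}$ cleanly resolves this in the $0$-limit, and your alternative treatment in the $\infty$-limit (bounding $\phi$ directly on the compact ball $\{|w|_{r_\infty}\le\lambda_\infty\}$ when the rescaling argument is unavailable) is also correct. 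The verification that $\Phi_{i,0}$ is not identically zero via $\partial_{x_i}\Phi_{i,0}=\phi_0$ is the right observation. One minor remark: at the isolated point where $\bar x(t)=0$ your pointwise bound $|g_\lambda(z)|\le B|z|_{r_0}^{d_0}$ may fail, but since $g_\lambda$ is continuous this set has measure zero and does not affect the integral estimate.
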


%
\par\vspace{1em}
By exploiting the definition of homogeneity in the bi-limit
it is possible to establish results which are straightforward
extensions of well-known results based on the standard notion of homogeneity.
These results are given as corollaries of a key technical lemma, the proof of which is given in Appendix \ref{c20}.
\begin{lemma}[Key technical lemma]\label{3}
Let  $\eta~:\RR^n\rightarrow\RR$ and $\gamma~:\RR^n\rightarrow\RR_+$ be two functions homogeneous in the bi-limit, with weights $r_0$ and $r_\infty$, degrees $d_0$ and $d_\infty$, and  approximating functions, $\eta_0$ and $\eta_\infty$, and, $\gamma_0$ and $\gamma_\infty$ such that the following holds~:
\\
\null \hfill $
\renewcommand{\arraystretch}{1.3}
\begin{array}[t]{rcl}
\left\{\  x\in \RR^n\setminus\{0\}\: :\;  \gamma(x) = 0\  \right\} \qquad
&\subseteq& \qquad \left\{\  x\in \RR^n \: :\; \eta(x) < 0\ \right\}\ ,\\
\left\{\  x\in \RR^n\setminus\{0\} \: :\;  \gamma_0(x) = 0 \  \right\} \qquad
&\subseteq& \qquad \left\{\  x\in \RR^n \: :\; \eta_0(x) < 0\ \right\}\ ,\\
\left\{\  x\in \RR^n\setminus\{0\} \: :\;  \gamma_\infty(x) = 0 \  \right\} \qquad
&\subseteq& \qquad \left\{\  x\in \RR^n \: :\; \eta_\infty(x) < 0\ \right\}\ .
\end{array}
$\hfill \null
\par\vspace{1em}\noindent
Then there exists a real number $c^*$ such that, for all $c\,\geq\,c^*$, and for all $x$ in $\RR^n\setminus\{0\}$~:
\begin{equation}\label{b53}
\eta(x) - c\,\gamma(x) <0\;,\quad \eta_0(x) - c\,\gamma_0(x) <0\;,\quad\eta_\infty(x) - c\,\gamma_\infty(x) <0\ .
\end{equation}
\end{lemma}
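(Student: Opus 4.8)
The plan is to build $c^*$ as the maximum of three constants obtained separately for the three inequalities in (\ref{b53}), using two elementary remarks. First, since $\gamma$, $\gamma_0$, $\gamma_\infty$ are nonnegative, each of the three differences $\eta-c\gamma$, $\eta_0-c\gamma_0$, $\eta_\infty-c\gamma_\infty$ is nonincreasing in $c$ at every fixed $x$; hence it suffices to make all three negative on $\RR^n\setminus\{0\}$ for a single value of $c$, and the statement for all larger $c$ then follows automatically. Second, by Remark~\ref{LP12} the pairs $(\eta_0,\gamma_0)$ and $(\eta_\infty,\gamma_\infty)$ are homogeneous in the standard sense, with weights $r_0,r_\infty$ and degrees $d_0,d_\infty$, so they can be handled by the classical compactness argument on a homogeneous sphere.

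For the pair $(\eta_0,\gamma_0)$, restrict to the unit homogeneous sphere $S_{r_0}$, which is compact (the constraint $|x|_{r_0}\le 1$ bounds each $|x_i|$) and avoids the origin. There $\gamma_0\ge 0$ and $\eta_0$ are continuous and, by the second inclusion of the hypothesis, $\eta_0<0$ wherever $\gamma_0=0$. A standard contradiction argument then produces $c_0^*$ with $\eta_0-c_0^*\gamma_0<0$ on $S_{r_0}$: otherwise there are $\theta_k\in S_{r_0}$ with $\eta_0(\theta_k)\ge k\,\gamma_0(\theta_k)\ge 0$, and any subsequential limit $\theta_*$ satisfies $\gamma_0(\theta_*)=0$ together with $\eta_0(\theta_*)\ge 0$, a contradiction. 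Homogeneity and the polar decomposition (\ref{4}) propagate this inequality to all $x\neq 0$. Applying the same reasoning on $S_{r_\infty}$, using the third inclusion, yields $c_\infty^*$ with $\eta_\infty-c_\infty^*\gamma_\infty<0$ on $\RR^n\setminus\{0\}$.

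It remains to make $\eta-c\gamma$ negative away from the origin, and this is where the bi-limit structure is used. I would cover $\RR^n\setminus\{0\}$ by three sets: $\Omega_0=\{0<|x|_{r_0}\le\rho_0\}$, $\Omega_\infty=\{|x|_{r_\infty}\ge\rho_\infty\}$, and the middle set $\Omega_{\mathrm m}=\{|x|_{r_0}\ge\rho_0\}\cap\{|x|_{r_\infty}\le\rho_\infty\}$. On $\Omega_0$, write $x=\lambda^{r_0}\diamond\theta$ with $\lambda=|x|_{r_0}\in(0,\rho_0]$ and $\theta\in S_{r_0}$; fixing $c=c_0^*$, the uniform convergence built into the definition of homogeneity in the $0$-limit, applied with the compact set $C=S_{r_0}$, gives $\lambda^{-d_0}(\eta-c_0^*\gamma)(\lambda^{r_0}\diamond\theta)\to(\eta_0-c_0^*\gamma_0)(\theta)$ uniformly on $S_{r_0}$, and the limit is bounded above by a negative constant; hence for a suitably small $\rho_0$ one has $\eta-c_0^*\gamma<0$ on $\Omega_0$, and this choice of $\rho_0$ defines $\Omega_0$. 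Symmetrically, using homogeneity in the $\infty$-limit with $c=c_\infty^*$, a suitably large $\rho_\infty$ gives $\eta-c_\infty^*\gamma<0$ on $\Omega_\infty$. With $\rho_0,\rho_\infty$ now fixed, $\Omega_{\mathrm m}$ is closed, bounded (since $|x|_{r_\infty}\le\rho_\infty$ forces $|x_i|\le\rho_\infty^{r_{\infty,i}}$) and bounded away from the origin (since $\{|x|_{r_0}<\rho_0\}$ is an open neighbourhood of $0$), hence compact with $0\notin\Omega_{\mathrm m}$; the same compactness argument as above, now applied to $\eta$ and $\gamma$ themselves and using the first inclusion of the hypothesis, yields $c_{\mathrm m}^*$ with $\eta-c_{\mathrm m}^*\gamma<0$ on $\Omega_{\mathrm m}$. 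Setting $c^*=\max\{c_0^*,c_\infty^*,c_{\mathrm m}^*\}$ and invoking the monotonicity remark on each region and on the two approximating inequalities completes the proof.

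I expect the only genuinely delicate point to be the bookkeeping of quantifiers: the thresholds $\rho_0$ and $\rho_\infty$ must be chosen \emph{before} $\Omega_{\mathrm m}$, and hence before $c_{\mathrm m}^*$, is introduced, which is legitimate precisely because $\rho_0$ and $\rho_\infty$ depend only on $c_0^*$ and $c_\infty^*$. Everything else reduces to the classical homogeneous compactness argument together with a careful use of the uniform convergence in the definitions of homogeneity in the $0$- and $\infty$-limit.
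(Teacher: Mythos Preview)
Your proposal is correct and follows essentially the same route as the paper's proof: a compactness/contradiction argument on the homogeneous spheres for $(\eta_0,\gamma_0)$ and $(\eta_\infty,\gamma_\infty)$, then uniform convergence from the bi-limit definitions to push negativity of $\eta-c\gamma$ into a small-$|x|_{r_0}$ region and a large-$|x|_{r_\infty}$ region, leaving a compact annular set $\Omega_{\mathrm m}$ handled by the same compactness argument. The only point the paper makes explicit that you take for granted is that $\gamma_0,\gamma_\infty\ge 0$ actually follows from $\gamma\ge 0$ via the limit definition; otherwise your bookkeeping of the thresholds $\rho_0,\rho_\infty$ before $c_{\mathrm m}^*$ is exactly the right order of quantifiers.
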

\example
\label{lp9}
To illustrate the importance of this Lemma, consider,
for $(x_1,x_2)$ in $\RR^2$, the functions
$$\eta
(x_1,x_2) = x_1\,  x_2 \,-\, |x_1|^{\frac{r_1+r_2}{r_1}}\quad ,\qquad
\gamma (x_1,x_2) = |x_2|^{\frac{r_1+r_2}{r_2}}
\  ,
$$
with $r_1>0$ and, $r_2 >0$,
They are homogeneous in the standard sense and therefore in the
bi-limit, with same weight
$r=(r_1,r_2)$ and same degree $d=r_1+r_2$.
Furthermore the function $\gamma$ takes positive values and for all $(x_1,x_2)$ in $\{(x_1,x_2)\in\RR^2\setminus\{0\}\::\;\gamma(x_1,x_2)=0\}$ we have
$$
\eta(x_1, x_2)\;=\; - |x_1|^{\frac{r_1+r_2}{r_1}} \,<\,0\ .
$$
So Lemma \ref{3} yields the existence of a positive real number $c^*$, such that for all $c\geq c*$, we have~:
\begin{equation}
x_1 \,x_2 \,-\, |x_1|^{\frac{r_1+r_2}{r_1}}\,-\,c\,
|x_2|^{\frac{r_1+r_2}{r_2}} < 0
\qquad \forall (x_1,x_2)\in\RR^2\setminus \{0\}
\  .
\end{equation}
This is a generalization of the procedure known as the completion of the squares in which however the constant $c_1^*$ is not specified.

\begin{corollary}\label{b25}
Let  $\phi~:\RR^n\rightarrow\RR$ and $\zeta~:\RR^n\rightarrow\RR_+$ be two homogeneous in the bi-limit functions with the same weights $r_0$ and $r_\infty$, degrees
$d_{\phi,0}$, $d_{\phi,\infty}$ and $d_{\zeta ,0}$, $d_{\zeta ,\infty}$, and approximating functions $\eta_0$, $\phi_\infty$ and $\zeta _0$, $\zeta _\infty$.
If the degrees satisfy $d_{\phi,0}\geq d_{\zeta ,0}$ and $d_{\phi,\infty}\leq d_{\zeta ,\infty}$ and
the functions $\zeta $, $\zeta _0$ and $\zeta _\infty$ are positive definite then there exists a positive real number $c$
satisfying~:
$$
\phi(x)\;\leq\; c\,\zeta (x) \quad,\; \forall\;x\;\in\;\RR^n\ .
$$
\end{corollary}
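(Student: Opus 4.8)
The plan is to obtain the bound from the Key Technical Lemma (Lemma~\ref{3}) after replacing $\zeta$ by a comparison function that carries exactly the degrees of $\phi$. First note that a positive definite homogeneous function cannot have degree $0$: a degree-$0$ homogeneous function is constant along dilation orbits, hence, by continuity at the origin and positive definiteness, identically $0$. Therefore $d_{\zeta,0}>0$ and $d_{\zeta,\infty}>0$, and we may set $\alpha_0:=d_{\phi,0}/d_{\zeta,0}$ and $\alpha_\infty:=d_{\phi,\infty}/d_{\zeta,\infty}$, which by hypothesis satisfy $\alpha_0\ge 1\ge\alpha_\infty\ge 0$. Define
$$\gamma(x)\;:=\;\frac{\zeta(x)^{\alpha_0}}{1+\zeta(x)^{\alpha_0-\alpha_\infty}}\ .$$

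Second, I would check that $\gamma$ is a legitimate comparison function, i.e. continuous, positive definite, and homogeneous in the bi-limit with associated triples $(r_0,d_{\phi,0},\zeta_0^{\alpha_0})$ and $(r_\infty,d_{\phi,\infty},\zeta_\infty^{\alpha_\infty})$. Homogeneity in the bi-limit of $x\mapsto\zeta(x)^{\alpha_0}$ and $x\mapsto\zeta(x)^{\alpha_\infty}$ follows from Proposition~\ref{prop1} applied with outer map $w\mapsto w^{\alpha}$ (of weight $1$ and degree $\alpha$); this gives $\zeta^{\alpha_0}$ the degrees $\alpha_0 d_{\zeta,0}=d_{\phi,0}$ at the origin and $\alpha_0 d_{\zeta,\infty}$ at infinity, and $\zeta^{\alpha_\infty}$ the degrees $\alpha_\infty d_{\zeta,0}$ and $\alpha_\infty d_{\zeta,\infty}=d_{\phi,\infty}$. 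Using that $\zeta(\lambda^{r_0}\diamond x)\to 0$ (resp. $\zeta(\lambda^{r_\infty}\diamond x)\to\infty$) uniformly on compact subsets of $\RR^n\setminus\{0\}$, one then computes directly $\lim_{\lambda\to 0}\gamma(\lambda^{r_0}\diamond x)/\lambda^{d_{\phi,0}}=\zeta_0(x)^{\alpha_0}$ and $\lim_{\lambda\to\infty}\gamma(\lambda^{r_\infty}\diamond x)/\lambda^{d_{\phi,\infty}}=\zeta_\infty(x)^{\alpha_\infty}$, exactly as in the justification of Example~\ref{c23} (the degenerate cases $\alpha_0=\alpha_\infty$, where $\gamma=\zeta/2$, and $d_{\phi,\infty}=0$, where $\gamma$ has the everywhere-positive constant $1$ as $\infty$-limit approximating function, being handled directly). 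Moreover $\gamma(x)\le\zeta(x)$ for all $x$: if $\zeta(x)\le 1$ then $\gamma(x)\le\zeta(x)^{\alpha_0}\le\zeta(x)$ because $\alpha_0\ge 1$; if $\zeta(x)\ge 1$ then $\gamma(x)\le\zeta(x)^{\alpha_0}/\zeta(x)^{\alpha_0-\alpha_\infty}=\zeta(x)^{\alpha_\infty}\le\zeta(x)$ because $\alpha_\infty\le 1$.

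Third, apply Lemma~\ref{3} with $\eta:=\phi$ and $\gamma$ as above. Since $\gamma$, $\gamma_0$ and $\gamma_\infty$ are everywhere positive on $\RR^n\setminus\{0\}$, the three set inclusions required by the lemma hold trivially (their left-hand sides are empty). The lemma then yields a constant $c^*$ with $\phi(x)-c^*\gamma(x)<0$ for all $x\ne 0$; and since $d_{\phi,0}\ge d_{\zeta,0}>0$ forces $\phi(0)=0=\gamma(0)$, this gives $\phi(x)\le c^*\gamma(x)$ for all $x\in\RR^n$. Composing with the bound $\gamma\le\zeta$ from the previous step yields $\phi(x)\le c^*\zeta(x)$ for all $x$, so $c:=c^*$ works.

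The delicate point is the second step: verifying rigorously that $\gamma$ is homogeneous in the bi-limit with the asserted approximating functions, since this combines Proposition~\ref{prop1} with a limit computation for the quotient and requires care with uniform convergence on compact sets and with the degenerate degree configurations. Everything else is routine. One could also bypass Lemma~\ref{3} altogether and argue by elementary estimates: bound $\phi/\zeta$ near the origin using the $0$-limit approximations together with $d_{\phi,0}\ge d_{\zeta,0}$, near infinity using the $\infty$-limit approximations together with $d_{\phi,\infty}\le d_{\zeta,\infty}$, and on the compact set $\{x:|x|_{r_0}\ge\lambda_0,\ |x|_{r_\infty}\le\lambda_\infty\}$ (which avoids the origin) by continuity of $\phi$ and positive definiteness of $\zeta$.
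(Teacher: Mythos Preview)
Your argument is correct, but the paper's proof is considerably shorter because it adjusts the degrees on the other side of the inequality. Instead of building a new comparison function $\gamma$ whose bi-limit degrees match those of $\phi$, the paper keeps $\gamma:=\zeta$ and sets $\eta:=\phi+\zeta$. By properties~P2/P3 (since $d_{\phi,0}\ge d_{\zeta,0}$ and $d_{\phi,\infty}\le d_{\zeta,\infty}$, the function $\zeta$ dominates $\phi$ in both limits), $\eta$ is homogeneous in the bi-limit with degrees $d_{\zeta,0}$ and $d_{\zeta,\infty}$, exactly matching $\gamma$. Lemma~\ref{3} then gives $c$ with $c\,\zeta(x)>\phi(x)+\zeta(x)>\phi(x)$ for $x\neq 0$, and continuity at the origin finishes.

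Your route works too, but it front-loads effort into verifying that the auxiliary $\gamma(x)=\zeta(x)^{\alpha_0}/(1+\zeta(x)^{\alpha_0-\alpha_\infty})$ is homogeneous in the bi-limit with the right triples (and into the edge cases $\alpha_0=\alpha_\infty$ and $d_{\phi,\infty}=0$), plus the separate bound $\gamma\le\zeta$. The paper's trick of replacing $\phi$ by $\phi+\zeta$ sidesteps all of this: it uses the already-established additive domination rules P2/P3 instead of a bespoke construction, and the final inequality $\eta>\phi$ is immediate since $\zeta\ge 0$. Your closing remark about a direct proof (bounding $\phi/\zeta$ near $0$, near $\infty$, and on the compact annulus) is essentially the content of the proof of Lemma~\ref{3} itself, so it would amount to re-deriving that lemma in this special case.
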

\begin{proof}
Consider the two functions
$$
\eta(x)\;:=\; \phi(x)+\zeta(x)\quad,\qquad\gamma(x)\;:=\;\zeta(x)\ .
$$
By property P2 (or  P3\footnote{%
If $\phi_0(x)+\zeta _0(x)=0$, respectively $\phi_\infty (x)+\zeta
_\infty (x)=0$, the proof can be completed replacing $\zeta $ with $2\zeta $.
}) in Subsection \ref{b55}, they are homogeneous in the bi-limit with degrees $d_{\zeta,0}$ and $d_{\zeta ,\infty}$.
The function $\gamma$ and its homogeneous approximations being positive definite, all assumptions of Lemma \ref{3} are  satisfied.
Therefore there exists a positive real number $c$ such that~:
$$
c\,\gamma(x)\;>\;\eta(x)\;>\; \phi(x)
\qquad  \forall x \in \RR^n\setminus\{0\}\ .
$$
Finally,  by continuity of the functions $\phi$ and $\zeta$ at zero, we can
obtain the claim.
\end{proof}

\subsection{Stability and homogeneous approximation}
\label{b68}

A very basic property of asymptotic stability is its robustness. This
fact was already known to Lyapunov who proposed
his second method,  (local) asymptotic stability of an
equilibrium is established
by looking at the first order approximation of the system.
The case of local homogeneous approximations of higher degree has
been investigated by Massera \cite{Massera}, Hermes \cite{Hermes} and
Rosier \cite{Rosier}.
\begin{proposition}[\cite{Rosier}]\label{b71}
Consider a homogeneous in the $0$-limit vector field $f~:\RR^n\rightarrow\RR^n$ with associated triple $(r_0,\dr_0,f_0)$.
If the origin of the system~:
$$
\dot x \;=\; f_0(x)
$$
is locally asymptotically stable then the origin of
$$
\dot x \;=\; f(x)
$$
is locally asymptotically stable.
\end{proposition}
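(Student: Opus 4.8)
The plan is to build a homogeneous (in the standard sense) strict Lyapunov function for the approximating system $\dot x = f_0(x)$ and then show that, after a suitable rescaling, it still decays along the trajectories of the perturbed system $\dot x = f(x)$ near the origin. Recall that, by Remark \ref{LP12}, the approximating vector field $f_0$ is homogeneous in the standard sense with weight $r_0$ and degree $\dr_0$. The first step is to invoke the converse Lyapunov theorem of Rosier for homogeneous systems: since the origin of $\dot x = f_0(x)$ is locally asymptotically stable and $f_0$ is homogeneous, it is in fact globally asymptotically stable for $\dot x = f_0(x)$ (homogeneity upgrades local to global), and there exists a $\CR^1$, positive definite, proper function $V~:\RR^n\rightarrow\RR_+$ which is homogeneous with weight $r_0$ and some degree $d_V > \max_i r_{0,i}$ (so that $\frac{\partial V}{\partial x_i} f_{0,i}$ is continuous at the origin), such that
$$
L_{f_0} V(x) \;=\; \sum_{i=1}^n \frac{\partial V}{\partial x_i}(x)\, f_{0,i}(x) \;<\; 0 \qquad \forall\, x \in \RR^n\setminus\{0\}\ .
$$

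The second step is a compactness/homogeneity argument on the sphere $S_{r_0}$. The function $x \mapsto L_{f_0}V(x)$ is homogeneous of degree $d_V + \dr_0$, hence it attains a negative maximum $-a < 0$ on the compact set $S_{r_0}$. Likewise, each function $x \mapsto \frac{\partial V}{\partial x_i}(x)$ is homogeneous of degree $d_V - r_{0,i}$, so $x \mapsto \sum_i \left|\frac{\partial V}{\partial x_i}(x)\right|$ is bounded on $S_{r_0}$, say by $b$. The third step estimates the derivative of $V$ along $f$. Writing $f_i = f_{0,i} + (f_i - f_{0,i})$ we get, for $x \neq 0$ decomposed as $x = \lambda^{r_0}\diamond\theta$ with $\lambda = |x|_{r_0}$ and $\theta\in S_{r_0}$,
$$
L_f V(x) \;=\; L_{f_0}V(x) \;+\; \sum_{i=1}^n \frac{\partial V}{\partial x_i}(x)\,\bigl(f_i(x) - f_{0,i}(x)\bigr)\ .
$$
By homogeneity, $L_{f_0}V(\lambda^{r_0}\diamond\theta) \leq -a\,\lambda^{d_V+\dr_0}$ and $\frac{\partial V}{\partial x_i}(\lambda^{r_0}\diamond\theta)$ is bounded in absolute value by $b\,\lambda^{d_V - r_{0,i}}$. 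The key point is the definition of homogeneity in the $0$-limit applied to each $f_i$ with triple $(r_0, \dr_0 + r_{0,i}, f_{0,i})$: for any $\varepsilon > 0$ there is $\lambda_0 > 0$ such that, uniformly over $\theta$ in the compact set $S_{r_0}$,
$$
\left| \frac{f_i(\lambda^{r_0}\diamond\theta)}{\lambda^{\dr_0 + r_{0,i}}} - f_{0,i}(\theta) \right| \;\leq\; \varepsilon \qquad \forall\, \lambda \in (0,\lambda_0]\ ,
$$
i.e. $|f_i(\lambda^{r_0}\diamond\theta) - f_{0,i}(\lambda^{r_0}\diamond\theta)| \leq \varepsilon\,\lambda^{\dr_0 + r_{0,i}}$ (using homogeneity of $f_{0,i}$). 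Combining these bounds,
$$
L_f V(\lambda^{r_0}\diamond\theta) \;\leq\; -a\,\lambda^{d_V + \dr_0} \;+\; n\,b\,\varepsilon\,\lambda^{d_V + \dr_0} \;=\; \bigl(-a + n\,b\,\varepsilon\bigr)\lambda^{d_V+\dr_0}\ .
$$
Choosing $\varepsilon = \frac{a}{2nb}$ fixes $\lambda_0$, and then for all $x$ with $0 < |x|_{r_0} \leq \lambda_0$ we have $L_f V(x) \leq -\frac{a}{2}|x|_{r_0}^{d_V + \dr_0} < 0$. Since $V$ is positive definite and proper, $V$ is a strict local Lyapunov function for $\dot x = f(x)$ on the neighborhood $\{|x|_{r_0} < \lambda_0\}$, which gives local asymptotic stability of the origin.

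The main obstacle is the very first step: one must be sure that the converse Lyapunov function for the homogeneous system $\dot x = f_0(x)$ can be taken $\CR^1$ and homogeneous of a degree large enough that $L_{f_0}V$ is continuous (in particular continuous at the origin, where the individual terms $\frac{\partial V}{\partial x_i}f_{0,i}$ could otherwise blow up if $d_V$ is too small). This is exactly the content of Rosier's construction, which smooths a non-smooth homogeneous Lyapunov function by integrating against a mollifier and then re-homogenizes; one may simply cite \cite{Rosier} for the existence of such a $V$. A secondary technical point is the uniformity of the $0$-limit estimate over the compact sphere $S_{r_0}$, but this is guaranteed directly by the definition of homogeneity in the $0$-limit (the max over a compact set appears in the definition), so no extra work is needed there.
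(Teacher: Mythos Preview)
Your proof is correct and follows essentially the same line as the paper's argument (which only cites \cite{Rosier} for this proposition but spells out the dual $\infty$-limit case in Appendix~\ref{c14}). The one cosmetic difference is that the paper treats the whole Lie derivative $x\mapsto \frac{\partial V}{\partial x}(x)f(x)$ as a single homogeneous-in-the-$0$-limit function (via Property~P1) and applies the definition once, whereas you split it as $L_{f_0}V + \sum_i \frac{\partial V}{\partial x_i}(f_i - f_{0,i})$ and estimate term by term; both routes yield the same bound $L_fV(\lambda^{r_0}\diamond\theta)\leq -\tfrac{a}{2}\lambda^{d_V+\dr_0}$ for $\lambda$ small.
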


Consequently, a natural strategy to ensure local asymptotic stability
of an equilibrium of a system is to design a stabilizing homogeneous control law for the homogeneous approximation in the
$0$-limit (see \cite{Hermes,KawskiSCL,Coron-Praly} for instance).
\example Consider the system (\ref{b18}) with $q\,=\,1$ and $p\,>\,q$, and the linear control law~:
$$
u \;=\; - (c_0+1)\,x_2 \,- \,x_1\ .
$$
The closed loop vector field is homogeneous in the $0$-limit with degree $\dr_0=0$, weight $(1,1)$ (i.e. we are in the linear
case) and associated vector field
$f_0(x_1,x_2) = \left(x_2,-x_1-x_2\right)^T$.
Selecting the Lyapunov function of degree two~:
$$
V_0(x_1,x_2) \;=\; \frac{1}{2}\,|x_1|^2\;+\; \frac{1}{2}\,\left|x_2\,+\,x_1\right|^2\ ,
$$
yields~:
$$
\frac{\partial V_0}{\partial x}(x)\,f_0(x)\;=\;
-|x_1|^2 \;-\;\left|x_2\,+\,x_1\right|^{2}\ .
$$
It follows, from Lyapunov second method, that the control law locally asymptotically stabilizes the equilibrium of the system.
Furthermore, local asymptotic stability is
preserved in the presence of any perturbation which does not change the approximating
homogeneous function, i.e., in the presence of perturbations which are dominated by the
linear part
(see Point P2 in Section \ref{b55}).

In the context of homogeneity in the $\infty$-limit,  we have the following result.
\begin{proposition}
\label{prop2}
Consider a homogeneous in the $\infty$-limit vector field $f~:\RR^n\rightarrow\RR^n$ with associated triple $(r_\infty,\dr_\infty,f_\infty)$.
If the origin of the system~:
$$
\dot x \;=\; f_\infty(x)\ ,
$$
is globally asymptotically stable then there exists an invariant compact subset of $\RR^n$, denoted
$\CR_\infty$, which is globally asymptotically stable\footnote{
See \cite{WesleyWilson} for the definition of
global asymptotical stability for invariant compact sets.
} for the system~:
$$
\dot x \;=\; f(x)\ .
$$
\end{proposition}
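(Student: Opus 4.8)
The idea is to transfer the global asymptotic stability of the origin of $\dot x = f_\infty(x)$ to a statement about the behaviour of $\dot x = f(x)$ at infinity, using the change of coordinates that swaps the roles of $0$ and $\infty$. First I would fix a homogeneous (in the standard sense, by Remark \ref{LP12}) proper positive definite Lyapunov function $V_\infty$ for $\dot x = f_\infty(x)$; such a $V_\infty$ exists by Rosier's converse theorem applied to the homogeneous vector field $f_\infty$, and one may take it of degree $d_\infty$ large enough that $\frac{\partial V_\infty}{\partial x}$ is continuous. Homogeneity gives $\frac{\partial V_\infty}{\partial x}(x)\,f_\infty(x) \le -c\,|x|_{r_\infty}^{d_\infty + \dr_\infty}$ on all of $\RR^n\setminus\{0\}$ for some $c>0$, after normalising on the unit sphere $S_{r_\infty}$ and dilating.

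Next I would estimate $\dot V_\infty$ along $\dot x = f(x)$, writing $\frac{\partial V_\infty}{\partial x}(x)\,f(x) = \frac{\partial V_\infty}{\partial x}(x)\,f_\infty(x) + \frac{\partial V_\infty}{\partial x}(x)\,[f(x)-f_\infty(x)]$. The point of homogeneity in the $\infty$-limit is precisely that the perturbation term $f - f_\infty$ is, after dilation by $\lambda^{r_\infty}$ and division by $\lambda^{d_\infty+\dr_\infty}$, uniformly small for $\lambda$ large, uniformly on the compact unit sphere $S_{r_\infty}$. Concretely: decompose $x = \lambda^{r_\infty}\diamond\theta$ in polar coordinates \eqref{4} with $\lambda = |x|_{r_\infty}$ and $\theta\in S_{r_\infty}$, and use the definition of homogeneity in the $\infty$-limit component-wise on the $f_i$ together with the homogeneity of $\frac{\partial V_\infty}{\partial x}$ to show there is $R>0$ such that, whenever $|x|_{r_\infty}\ge R$,
\[
\frac{\partial V_\infty}{\partial x}(x)\,f(x)\;\le\;-\,\frac{c}{2}\,|x|_{r_\infty}^{d_\infty+\dr_\infty}\;<\;0\ .
\]
Thus on the (closed) set $\{V_\infty(x)\ge \ell\}$ with $\ell$ chosen so that this set lies inside $\{|x|_{r_\infty}\ge R\}$, $V_\infty$ is strictly decreasing along solutions. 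Set $\CR_\infty := \{x : V_\infty(x)\le \ell\}$; it is compact (properness of $V_\infty$) and, by the sign of $\dot V_\infty$ on its boundary, forward invariant, and every solution enters it in finite time and cannot leave — this gives global attractivity of $\CR_\infty$. Lyapunov stability of $\CR_\infty$ as a set follows from the same sublevel-set structure (sublevel sets of $V_\infty$ above level $\ell$ form a neighbourhood basis that is shrunk by the flow). Appealing to the definition in \cite{WesleyWilson}, $\CR_\infty$ is globally asymptotically stable for $\dot x = f(x)$, and it is invariant by construction (one can replace it by the largest invariant subset it contains, or simply note the sublevel set is already forward invariant; to get full invariance take $\bigcap_{t\le 0}$ of its forward image, which stays compact).

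The main obstacle is the uniformity in the perturbation estimate: the definition of homogeneity in the $\infty$-limit is stated compact set by compact set in $\RR^n\setminus\{0\}$, so I must be careful that the single compact set $S_{r_\infty}$ suffices and that, after multiplying by $\frac{\partial V_\infty}{\partial x}$ and re-summing over the $n$ components (each with its own degree shift $\dr_\infty + r_{\infty,i}$), the leftover terms genuinely scale with a strictly smaller power of $\lambda$ than $\lambda^{d_\infty+\dr_\infty}$, so that they are dominated for $\lambda$ large — this is exactly the kind of domination packaged in properties P2–P3 and the Key Technical Lemma \ref{3}, which I would invoke rather than redo by hand. A secondary technical point is continuity/differentiability of $V_\infty$ at the origin, which is irrelevant here since all the action takes place on $\{|x|_{r_\infty}\ge R\}$, bounded away from $0$.
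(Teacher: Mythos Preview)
Your proposal is correct and follows essentially the same approach as the paper: construct a homogeneous Lyapunov function $V_\infty$ for $f_\infty$ via Rosier, show $\frac{\partial V_\infty}{\partial x}(x)f(x)<0$ outside a compact set using the $\infty$-limit approximation on $S_{r_\infty}$, and take a sublevel set of $V_\infty$ as $\CR_\infty$. The paper resolves your ``main obstacle'' not via Lemma~\ref{3} or component-wise estimates but simply by observing (property P1) that $x\mapsto\frac{\partial V_\infty}{\partial x}(x)f(x)$ is itself homogeneous in the $\infty$-limit with approximating function $\frac{\partial V_\infty}{\partial x}f_\infty$, so applying the definition on the single compact set $S_{r_\infty}$ yields the needed uniform estimate directly.
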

The proof of the proposition is given in Appendix \ref{c14}.

As in the case of homogeneity in the $0$-limit, this property can be
used to design a feedback ensuring boundedness of  solutions.

\example Consider the system (\ref{b18}) with  $0\,<\,q\,<\,p   \,<\, 2$ and
 the control law~:
\begin{equation}\label{b72}
u \;=\; - \frac{1}{2-p}\,x_1^\frac{p-1}{2-p}x_2\,-\,x_1^\frac{p}{2-p}\,-\,c_\infty\,x_2^p-\left(x_2\,+\,x_1^\frac{1}{2-p}\right)^p\ .
\end{equation}
This control law is such that the closed loop vector field is homogeneous in the $\infty$-limit with degree $\dr_\infty=p-1$, weight $(2-p,1)$ and associated vector field
$f_\infty(x_1,x_2) = \left(x_2,- \frac{1}{2-p}\,x_1^\frac{p-1}{2-p}x_2\,-\,x_1^\frac{p}{2-p}-\left(x_2\,+\,x_1^\frac{1}{2-p}\right)^p\right)^T$.
For the homogeneous Lyapunov function of degree two~:
$$
V_\infty(x_1,x_2) \;=\; \frac{2-p}{2}\,|x_1|^\frac{2}{2-p}\;+\; \frac{1}{2}\,\left|x_2\,+\,x_1^\frac{1}{2-p}\right|^2\ ,
$$
we get~:
$$
\frac{\partial V_\infty}{\partial x}(x)\,f_\infty(x)\;=\;
-|x_1|^\frac{p+1}{2-p} \;-\;\left|x_2\,+\,x_1^\frac{1}{2-p}\right|^{p+1}\ .
$$
It follows that the control law (\ref{b72}) guarantees boundedness of
the solutions of the closed loop system.
Furthermore, boundedness of solutions is preserved
in the presence of any perturbation which does not change the approximating
homogeneous function in the $\infty$-limit, i.e.
in the presence of perturbations which are negligible with respect to the
dominant homogeneous part (see Point P2 in Section \ref{b55}).

The key step in the proof of Propositions \ref{b71} and \ref{prop2} is the converse Lyapunov theorem given by Rosier in \cite{Rosier}.
This result can also be extended to the
case of homogeneity in the bi-limit.
%
\begin{theorem}[Homogeneous in the bi-limit Lyapunov functions]
\label{b26}
Consider a homogeneous in the bi-limit vector field $f~:\RR^n\rightarrow\RR^n$, with associated triples $(r_\infty,\dr_\infty,f_\infty)$ and $(r_0,\dr_0,f_0)$ such that the origins of the systems~:
\begin{equation}\label{c19}
\dot x = f(x)\qquad,\quad \dot x = f_\infty(x)\qquad,\quad \dot x = f_0(x)
\end{equation}
are globally asymptotically stable equilibria.
Let $d_{V_\infty}$ and $d_{V_0}$ be real numbers such that
$d_{V_\infty}>\max_{1\leq i\leq n}r_{\infty,i}$ and $d_{V_0}>\max_{1\leq i\leq n}r_{0,i}$.
Then there exists a $C^1$, positive definite and proper function $V~:\RR^n\rightarrow\RR_+$
such that, for each $i$ in $\{1, \dots, n\}$, the functions  $x \mapsto\frac{\partial V}{\partial x_i}$ is
homogeneous in the bi-limit with associated triples
$\left(r_0,d_{V_0}-r_{0,i},\frac{\partial V_0}{\partial x_i}\right)$ and
$\left(r_\infty,d_{V_\infty}-r_{\infty,i},\frac{\partial V_\infty}{\partial x_i}\right)$ and
the function $x\mapsto \frac{\partial V}{\partial x}(x)\,f(x)$,
$x\mapsto \frac{\partial V_0}{\partial x}(x)\,f_0(x) $
and
$x\mapsto \frac{\partial V_\infty}{\partial x}(x)\,f_\infty(x) $
are negative definite.
\end{theorem}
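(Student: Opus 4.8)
The plan is to build $V$ by patching together two homogeneous Lyapunov functions, one adapted to the $0$-limit approximation $f_0$ and one adapted to the $\infty$-limit approximation $f_\infty$, and then to verify that the patched function actually works for $f$ itself via the domination machinery already developed (Corollary \ref{b25} and the key technical Lemma \ref{3}). First I would invoke Rosier's converse theorem \cite{Rosier} applied separately to the homogeneous systems $\dot x = f_0(x)$ and $\dot x = f_\infty(x)$: since these are globally asymptotically stable and genuinely homogeneous (Remark \ref{LP12}), for any prescribed degrees $d_{V_0}>\max_i r_{0,i}$ and $d_{V_\infty}>\max_i r_{\infty,i}$ one obtains $C^1$ homogeneous, positive definite, proper functions $V_0$ and $V_\infty$, homogeneous of degree $d_{V_0}$ (resp.\ $d_{V_\infty}$) with weight $r_0$ (resp.\ $r_\infty$), whose derivatives along $f_0$ (resp.\ $f_\infty$) are negative definite; the choice of degree above the maximal weight guarantees the $\frac{\partial V_0}{\partial x_i}$ are continuous (indeed homogeneous of degree $d_{V_0}-r_{0,i}\geq 0$), and similarly for $V_\infty$.

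Next I would glue these together using the construction highlighted in Example \ref{c23}: set
$$
V(x) \;=\; \mathfrak{H}\bigl(V_0(x),V_\infty(x)\bigr)\;=\;\frac{V_0(x)}{1+V_0(x)}\,\bigl[1+V_\infty(x)\bigr]\ .
$$
By that example $V$ is positive definite and homogeneous in the bi-limit with approximating functions $V_0$ at the origin and $V_\infty$ at infinity, and it is proper because $V_\infty$ is; one checks $V$ is $C^1$ since $V_0,V_\infty$ are. The partial derivatives $\frac{\partial V}{\partial x_i}$ are then, by the product/sum/composition rules P1--P3 and Propositions \ref{prop1}--\ref{b94}, homogeneous in the bi-limit with the claimed triples $\left(r_0,d_{V_0}-r_{0,i},\frac{\partial V_0}{\partial x_i}\right)$ and $\left(r_\infty,d_{V_\infty}-r_{\infty,i},\frac{\partial V_\infty}{\partial x_i}\right)$ — the normalizing factor $\frac{V_0}{1+V_0}$ tends to $0$ like $V_0$ at the origin and to $1$ at infinity, so it does not disturb the leading approximations.

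It then remains to prove that $\frac{\partial V}{\partial x}(x)\,f(x)$ is negative definite, and this is where the real work lies. The function $W:=\frac{\partial V}{\partial x}\,f$ is homogeneous in the bi-limit (product of bi-limit homogeneous factors, using P1 and P3 componentwise in the sum over $i$), with $0$-limit approximation $\frac{\partial V_0}{\partial x}\,f_0$ and $\infty$-limit approximation $\frac{\partial V_\infty}{\partial x}\,f_\infty$ — here one must be slightly careful that the degrees match across $i$ so that P3 applies and no term is lost; this follows from $f$ being a homogeneous-in-the-bi-limit \emph{vector field}, whose definition forces $\mathfrak d_0+r_{0,i}$ to be exactly the degree of $f_i$. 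Both approximations are negative definite by Rosier's construction. Now apply Lemma \ref{3} with $\eta := W$ and $\gamma$ a fixed positive definite bi-limit homogeneous function of matching degrees (for instance $\gamma(x)=|x|_{r_0}^{d_{W,0}}+|x|_{r_\infty}^{d_{W,\infty}}$ as in Example \ref{b49}, arranging degrees via the freedom to rescale weights): since $\gamma>0$ on $\RR^n\setminus\{0\}$ the set $\{\gamma=0\}$ is empty and the inclusions hold trivially, so there is $c^*$ with $W(x)-c^*\gamma(x)<0$ for all $x\neq 0$, whence $W(x)<0$ for all $x\neq 0$; the same conclusion for the two approximations is automatic since they were already negative definite. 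Continuity at $0$ finishes the argument.

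The main obstacle I anticipate is the bookkeeping in the third step: ensuring that when one differentiates the patched function $V$ and contracts with $f$, the \emph{homogeneous approximations} of the resulting function are exactly $\frac{\partial V_0}{\partial x}f_0$ and $\frac{\partial V_\infty}{\partial x}f_\infty$ and that no cross-terms of equal degree appear with the wrong sign or cause cancellation — i.e.\ correctly tracking degrees $\frac{d_{V_0}-r_{0,i}}{r_{0,i}}+\frac{\mathfrak d_0+r_{0,i}}{r_{0,i}}$ across all $i$ and invoking P3 rather than P2. Everything else (Rosier, the $\mathfrak H$-gluing, the final application of Lemma \ref{3}) is essentially a citation or a direct appeal to results already in hand.
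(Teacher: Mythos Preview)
Your proposal has a genuine gap in the final step, and it stems from never using the hypothesis that the origin of $\dot x = f(x)$ itself is globally asymptotically stable --- you only use GAS of the two approximations $f_0$ and $f_\infty$.

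Concretely, your application of Lemma~\ref{3} does not prove what you claim. With $\eta = W$ and a positive definite $\gamma$, the lemma gives $W(x) - c^*\gamma(x) < 0$, i.e.\ $W(x) < c^*\gamma(x)$; since $\gamma(x)>0$ this is only an \emph{upper} bound and says nothing about the sign of $W$. The deeper issue is that homogeneity in the bi-limit with negative-definite approximations $W_0,W_\infty$ controls $W$ only for small and large $|x|$; in the intermediate region $W$ is a priori arbitrary. A simple way to see the obstruction: if your argument were valid, the theorem would hold without assuming GAS of $\dot x = f(x)$, which is false (take $f$ agreeing with $f_0$ near $0$ and with $f_\infty$ far out, but with a spurious stable limit cycle glued in between).

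The paper's proof addresses exactly this by introducing a \emph{third} Lyapunov function $V_m$ coming from the converse Lyapunov theorem for $\dot x = f(x)$, and then patching the three functions $V_0,V_m,V_\infty$ via smooth cutoff functions $\varphi_0,\varphi_\infty$ on level sets of $V_m$: $V$ equals $\omega_0 V_0$ where $V_m$ is small, equals $V_m$ in a middle shell, and equals $\omega_\infty V_\infty$ where $V_m$ is large. Negative definiteness of $\frac{\partial V}{\partial x}f$ in the transition layers is obtained not from Lemma~\ref{3} but from explicit sign checks: each term in the derivative is a nonnegative factor times one of $\frac{\partial V_0}{\partial x}f$, $\frac{\partial V_m}{\partial x}f$, $\frac{\partial V_\infty}{\partial x}f$, and the layers are chosen (via Proposition~\ref{prop2} and its $0$-limit analogue) so that these are already negative there. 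Your $\mathfrak{H}$-gluing is elegant for producing the right bi-limit homogeneity of $\partial V/\partial x_i$, but it cannot by itself force $\frac{\partial V}{\partial x}f<0$ in the middle; you need to bring $V_m$ (equivalently, the GAS hypothesis on $f$) into the construction.
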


The proof is given in Appendix \ref{c13}.
A direct consequence of this result is an Input-to-State Stability
(ISS) property with respect to disturbances (see \cite{Sontag}).
To illustrate this property, consider the system with exogenous disturbance $\delta=(\delta_1, \dots, \delta_m)$ in $\RR^m$~:
\begin{equation}\label{c26}
\dot x = f(x,\delta)\ ,
\end{equation}
with $f~:\RR^n\times\RR^m$ a continuous vector field homogeneous in the bi-limit with associated
triples $(\dr_0, (r_0,\mathfrak{r}_{0}), f_{0})$ and
$(\dr_\infty,(r_\infty, \mathfrak{r}_{\infty}), f_{\infty})$
where $\mathfrak{r}_{0}$ and $\mathfrak{r}_{\infty}$ in $(\RR_+\setminus\{0\})^m$ are the weights
associated to the disturbance $\delta$.
\begin{corollary}[ISS Property]\label{b51}
If  the origins of the systems~:
$$
\dot x = f(x,0)\quad, \qquad \dot x = f_0(x,0)\quad, \qquad \dot x = f_\infty(x,0)
$$
are globally asymptotically stable equilibria,
then under the hypotheses of Theorem \ref{b26}
the function $V$ given by Theorem \ref{b26} satisfies\footnote{The function $\mathfrak{H}$ is
defined in (\ref{c41}). } for all
$\delta=(\delta_1, \dots, \delta_m)$ in $\RR^m$ and $x$ in $\RR^n$~:
\\[1em]$
\displaystyle\frac{\partial V}{\partial x}(x)\,f(x,\delta)
\;\leq\; -c_V\;
\mathfrak{H}\left(V(x)^\frac{d_{V_0}+d_0}{d_{V_0}},V(x)^\frac{d_{V_\infty}+d_\infty}{d_{V_\infty}}\right)
$\hfill\null\\
\refstepcounter{equation}$(\theequation)$\hfill\label{b98} $\displaystyle  \;+\;c_\delta\sum_{j=1}^m\,
\mathfrak{H}\left(|\delta_j|^\frac{d_{V_0}+d_0}{ \mathfrak{r}_{0,j}},|\delta_j|^\frac{d_{V_\infty}+d_\infty}{\mathfrak{r}_{\infty,j}}\right)\ ,
$\\[0.5em]
where $c_V$ and $c_\delta$ are positive real numbers.\\[1em]
\end{corollary}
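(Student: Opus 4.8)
The plan is to apply the Key Technical Lemma (Lemma \ref{3}) on the extended space $\RR^n\times\RR^m$, equipped with the weights $(r_0,\mathfrak{r}_0)$ and $(r_\infty,\mathfrak{r}_\infty)$, to a pair $(\eta,\gamma)$ assembled from $\frac{\partial V}{\partial x}(x)f(x,\delta)$ and from the two terms on the right hand side of (\ref{b98}). First I would apply Theorem \ref{b26} to the vector field $x\mapsto f(x,0)$, which is homogeneous in the bi-limit with triples $(r_0,\dr_0,f_0(\cdot,0))$, $(r_\infty,\dr_\infty,f_\infty(\cdot,0))$ and has the origin globally asymptotically stable for each of its three associated systems; this produces the function $V$, which by its construction (see Appendix \ref{c13}) is itself homogeneous in the bi-limit with degrees $d_{V_0}$, $d_{V_\infty}$ and positive definite approximating functions $V_0$, $V_\infty$, satisfies $\frac{\partial V}{\partial x}(0)=0$ (being $C^1$ and positive definite, with $V(0)=0$), and is such that $\frac{\partial V}{\partial x}(x)f(x,0)$, $\frac{\partial V_0}{\partial x}(x)f_0(x,0)$ and $\frac{\partial V_\infty}{\partial x}(x)f_\infty(x,0)$ are negative definite.

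Next I would deal with the homogeneity bookkeeping, all functions being viewed as functions of $(x,\delta)$. Using that each $\frac{\partial V}{\partial x_i}$ is homogeneous in the bi-limit with degrees $d_{V_0}-r_{0,i}$, $d_{V_\infty}-r_{\infty,i}$ and each component $f_i$ of $f$ with degrees $\dr_0+r_{0,i}$, $\dr_\infty+r_{\infty,i}$, properties P1 and P3 of Section \ref{b55} applied term by term show that $W(x,\delta):=\frac{\partial V}{\partial x}(x)f(x,\delta)$ is homogeneous in the bi-limit with weights $(r_0,\mathfrak{r}_0)$, $(r_\infty,\mathfrak{r}_\infty)$, degrees $d_{V_0}+\dr_0$, $d_{V_\infty}+\dr_\infty$ and approximating functions $W_0(x,\delta):=\frac{\partial V_0}{\partial x}(x)f_0(x,\delta)$, $W_\infty(x,\delta):=\frac{\partial V_\infty}{\partial x}(x)f_\infty(x,\delta)$ (the non-vanishing required by P3 holds because $W_0(x,0)$ and $W_\infty(x,0)$ are negative definite). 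Since $\dr_0+r_{0,i}\geq0$ for all $i$ and $d_{V_0}>\max_i r_{0,i}$ one has $d_{V_0}+\dr_0>0$, and likewise $d_{V_\infty}+\dr_\infty>0$; hence, by Proposition \ref{prop1} and the argument of Example \ref{c23} (for the $V$-term, carried out with $V(\cdot)^\frac{d_{V_0}+\dr_0}{d_{V_0}}$ and $V(\cdot)^\frac{d_{V_\infty}+\dr_\infty}{d_{V_\infty}}$, which are homogeneous in the $0$- and in the $\infty$-limit respectively; for the $\delta$-term, carried out in dimension one on each $\delta_j$ and then summed via P3), the functions
$$\Gamma_V(x):=\mathfrak{H}\left(V(x)^\frac{d_{V_0}+\dr_0}{d_{V_0}},V(x)^\frac{d_{V_\infty}+\dr_\infty}{d_{V_\infty}}\right)\ ,\qquad \Gamma_\delta(\delta):=\sum_{j=1}^m\mathfrak{H}\left(|\delta_j|^\frac{d_{V_0}+\dr_0}{\mathfrak{r}_{0,j}},|\delta_j|^\frac{d_{V_\infty}+\dr_\infty}{\mathfrak{r}_{\infty,j}}\right)$$
are, as functions of $(x,\delta)$, homogeneous in the bi-limit with exactly the same weights and degrees as $W$, with $0$-limit approximations $V_0(x)^\frac{d_{V_0}+\dr_0}{d_{V_0}}$ and $\sum_j|\delta_j|^\frac{d_{V_0}+\dr_0}{\mathfrak{r}_{0,j}}$ (and similarly at infinity); moreover $\Gamma_V$ is positive definite in $x$, while $\Gamma_\delta$ and its two approximations vanish exactly on $\{\delta=0\}$.

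Then I would fix the gain $c_V$. As $-W(\cdot,0)$, $-W_0(\cdot,0)$, $-W_\infty(\cdot,0)$ are positive definite with the same weights and degrees as $\Gamma_V$, $V_0(\cdot)^\frac{d_{V_0}+\dr_0}{d_{V_0}}$, $V_\infty(\cdot)^\frac{d_{V_\infty}+\dr_\infty}{d_{V_\infty}}$ respectively, Corollary \ref{b25} (for the first pair) and the elementary comparison of standard homogeneous positive definite functions of equal weight and degree (for the other two) yield three constants; choosing $c_V>0$ strictly smaller than the reciprocal of each makes $x\mapsto W(x,0)+c_V\Gamma_V(x)$, $x\mapsto W_0(x,0)+c_V V_0(x)^\frac{d_{V_0}+\dr_0}{d_{V_0}}$ and $x\mapsto W_\infty(x,0)+c_V V_\infty(x)^\frac{d_{V_\infty}+\dr_\infty}{d_{V_\infty}}$ negative definite. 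Now set $\eta:=W+c_V\Gamma_V$ and $\gamma:=\Gamma_\delta$; both are homogeneous in the bi-limit with the same weights and degrees (for $\eta$ the non-vanishing in P3 holds because $\eta_0(x,0)<0$ for $x\neq0$), $\gamma\geq0$, and the zero sets of $\gamma$, $\gamma_0$, $\gamma_\infty$ are contained in $\{(x,\delta):\delta=0\}$, on which $\eta$, $\eta_0$, $\eta_\infty$ are strictly negative away from the origin by the previous sentence. Lemma \ref{3} then gives $c^*$ with $\eta-c^*\gamma<0$ on $(\RR^n\times\RR^m)\setminus\{0\}$; evaluating at $x=0$ (where $\eta=0$) forces $c^*>0$, so with $c_\delta:=c^*$ we obtain $\frac{\partial V}{\partial x}(x)f(x,\delta)<-c_V\Gamma_V(x)+c_\delta\Gamma_\delta(\delta)$ for $(x,\delta)\neq0$, while at $(x,\delta)=0$ both sides are $0$; this is (\ref{b98}).

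The main obstacle is not the homogeneity computations, which are routine uses of P1, P3, Proposition \ref{prop1} and Example \ref{c23}, but rather recognizing that one cannot simply dominate $\frac{\partial V}{\partial x}(x)f(x,\delta)$ by a positive multiple of $\Gamma_V+\Gamma_\delta$ through Lemma \ref{3} — that would produce $+c\,\Gamma_V$, not $-c_V\,\Gamma_V$, on the right hand side and be useless — so the negative definiteness at $\delta=0$ supplied by Theorem \ref{b26} must be built into $\eta$ before Lemma \ref{3} is invoked, which is precisely the role of $c_V$ and of Corollary \ref{b25}.
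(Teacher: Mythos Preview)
Your proof is correct and follows essentially the same approach as the paper: apply Lemma \ref{3} on $\RR^n\times\RR^m$ with $\gamma=\Gamma_\delta$, and use Corollary \ref{b25} to produce the $-c_V\Gamma_V$ term. The only cosmetic difference is that the paper reserves half of $\frac{\partial V}{\partial x}(x)f(x,0)$ up front (taking $\eta_1=\frac{\partial V}{\partial x}(x)[f(x,\delta)-\tfrac{1}{2}f(x,0)]$, which is automatically negative on $\{\delta=0\}$), applies Lemma \ref{3} to get $c_\delta$, and only then invokes Corollary \ref{b25} on the reserved half to get $c_V$; you instead fix $c_V$ first via Corollary \ref{b25} and fold $c_V\Gamma_V$ into $\eta$ before calling Lemma \ref{3}.
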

In other words, system (\ref{c26}) with $\delta$ as input is ISS.
The proof of this corollary is given in Appendix \ref{c03}.

Finally, we have also the following small-gain result for homogeneous in the bi-limit vector fields.
\begin{corollary}[Small-Gain]\label{b36}
Under the hypotheses of Corollary \ref{b51}, there exists a real number
$c_G>0$ such that,
for each class $\mathcal{K}$ function $\gamma _z$ and
$\mathcal{KL}$ function $\beta _\delta $, there exists a class
$\mathcal{KL}$ function $\beta _x$ such that, for each function
$t\in [0,T)\mapsto (x(t),\delta (t),z(t))$, $T\leq +\infty $,
with $x$ $C^1$ and $\delta $ and $z$
continuous, which satisfies, both (\ref{c26}) on $[0,T)$
and,  for all $0\leq s\leq t\leq T$,
\begin{eqnarray}
\label{LP11}
|z(t)|&\leq &
\max\left\{
\beta _\delta \Big(|z(s)|, t-s\Big)\,  ,\,
\sup_{s\leq \kappa \leq t}
\gamma _z(|x(\kappa)|)\right\}\  ,
\\\label{b96}
\null \qquad |\delta _i(t)|
&\leq& \max\left\{
\beta _\delta \Big(|z(s)|, t-s\Big)\,  ,\,  c _G\,
\sup_{s\leq \kappa \leq t}\left\{
\mathfrak{H}\left(|x(\kappa)|_{r_0}^{\mathfrak{r}_{0,i}},|x(\kappa)|_{r_\infty}^{\mathfrak{r}_{\infty,i}}
\right)
\right\}\right\}\  ,
\end{eqnarray}
we have
\begin{equation}
|x(t)|\leq \beta _x(|(x(s),z(s))|,t-s)\qquad 0\leq s\leq t\leq T\ .
\end{equation}
\end{corollary}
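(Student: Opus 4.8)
\medskip
\noindent\textbf{Sketch of proof.}
Set $a_0=\frac{d_{V_0}+d_0}{d_{V_0}}$, $a_\infty=\frac{d_{V_\infty}+d_\infty}{d_{V_\infty}}$, $b_{0,j}=\frac{d_{V_0}+d_0}{\mathfrak{r}_{0,j}}$, $b_{\infty,j}=\frac{d_{V_\infty}+d_\infty}{\mathfrak{r}_{\infty,j}}$. The plan is to carry the homogeneous in the bi-limit Lyapunov function $V$ of Theorem~\ref{b26} along a trajectory and to read the interconnection $x\to\delta\to x$ as a loop whose gain is governed by $c_G$, the coupling through $z$ entering only as a vanishing input. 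Under the hypotheses of Corollary~\ref{b51}, $V$ satisfies (\ref{b98}). Moreover $V$ is $C^1$, positive definite, proper, and homogeneous in the bi-limit with degrees $d_{V_0},d_{V_\infty}$ and positive definite approximating functions $V_0,V_\infty$ (integrate, via Proposition~\ref{b94}, the homogeneity of the $\partial V/\partial x_i$ given by Theorem~\ref{b26}); so, by Example~\ref{c23} and Corollary~\ref{b25}, $V(x)$ is squeezed between two positive multiples of $\mathfrak{H}(|x|_{r_0}^{d_{V_0}},|x|_{r_\infty}^{d_{V_\infty}})$, whence there are $\mathcal{K}_\infty$ functions $\psi_0,\psi_\infty$ with $|x|_{r_0}\le\psi_0(V(x))$, $|x|_{r_\infty}\le\psi_\infty(V(x))$, $\psi_0(v)$ comparable to $v^{1/d_{V_0}}$ near $v=0$ and $\psi_\infty(v)$ to $v^{1/d_{V_\infty}}$ near $v=+\infty$. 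First I would, in (\ref{b98}), bound each $|\delta_j|$ by (\ref{b96}) and split it channel by channel: with $M(t)=\sup_{s\le\kappa\le t}V(x(\kappa))$, $A(\tau)=\beta_\delta(|z(s)|,\tau)$ and $B_j=c_G\,\mathfrak{H}(\psi_0(M)^{\mathfrak{r}_{0,j}},\psi_\infty(M)^{\mathfrak{r}_{\infty,j}})$ one has $|\delta_j(\kappa)|\le\max\{A(\kappa-s),B_j\}$, so the elementary inequality $\mathfrak{H}(\max\{a,b\}^{c},\max\{a,b\}^{c'})\le 4\mathfrak{H}(a^{c},a^{c'})+4\mathfrak{H}(b^{c},b^{c'})$ (a consequence of the monotonicity and concavity built into $\mathfrak{H}$) splits the disturbance term of (\ref{b98}) into a part depending only on $A$ (hence on $|z(s)|$) and a ``loop part'' $4c_\delta\sum_j\mathfrak{H}(B_j^{b_{0,j}},B_j^{b_{\infty,j}})$ depending only on $M$.

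The heart of the proof is the loop part. The exponents in (\ref{b98}) are rigged precisely so that, channel by channel, $M\mapsto\mathfrak{H}(B_j^{b_{0,j}},B_j^{b_{\infty,j}})$ is homogeneous in the bi-limit with exactly the same weights and degrees at $0$ and at $\infty$ as $M\mapsto\mathfrak{H}(M^{a_0},M^{a_\infty})$: near $M=0$, $\psi_0(M)^{\mathfrak{r}_{0,j}}\sim M^{\mathfrak{r}_{0,j}/d_{V_0}}$, the inner $\mathfrak{H}$ behaves likewise, $B_j^{b_{0,j}}\sim c_G^{\,b_{0,j}}M^{a_0}$ since $\mathfrak{r}_{0,j}b_{0,j}/d_{V_0}=a_0$, and the outer $\mathfrak{H}$ like the same; symmetrically at $+\infty$ with the $\infty$-data. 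Hence, by Propositions~\ref{prop1} and \ref{b94}, Properties~P1--P3, Example~\ref{c23}, and the compactness-and-homogeneity mechanism behind Lemma~\ref{3} and Corollary~\ref{b25}, there is a function $g$ with $g(c_G)\to0$ as $c_G\to0^+$ --- each approximating function of the loop gain carrying a factor $c_G^{\theta}$, $\theta>0$, at \emph{both} limits --- such that $4c_\delta\sum_j\mathfrak{H}(B_j^{b_{0,j}},B_j^{b_{\infty,j}})\le g(c_G)\,\mathfrak{H}(M^{a_0},M^{a_\infty})$ for all $x$. Fix $c_G=c_G^{*}>0$ --- depending only on $c_V$, $c_\delta$ and the homogeneity data, hence not on $\gamma_z$ or $\beta_\delta$ --- so small that $g(c_G^{*})\le c_V/2$. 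Substituting back, every trajectory satisfying (\ref{c26}) and (\ref{b96}) on $[0,T)$ obeys, for $\kappa\in[s,t]$,
\[
\dot V(\kappa)\ \le\ -c_V\,\mathfrak{H}\!\big(V(x(\kappa))^{a_0},V(x(\kappa))^{a_\infty}\big)\ +\ \frac{c_V}{2}\,\mathfrak{H}\!\big(M(\kappa)^{a_0},M(\kappa)^{a_\infty}\big)\ +\ \nu_z(\kappa-s)\ ,
\]
where $\nu_z(\tau)=4c_\delta\sum_j\mathfrak{H}(A(\tau)^{b_{0,j}},A(\tau)^{b_{\infty,j}})$ is non-increasing in $\tau$, tends to $0$ as $\tau\to+\infty$, and is a class $\mathcal{K}$ function of $|z(s)|$ at $\tau=0$.

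The conclusion then follows by the standard trajectory small-gain / comparison argument. Because $V(x(\kappa))=M(\kappa)$ whenever $V(x(\cdot))$ reaches a new running supremum, the term $\frac{c_V}{2}\mathfrak{H}(M(\kappa)^{a_0},M(\kappa)^{a_\infty})$ can never sustain growth past the level set by $V(x(s))$ and $\nu_z(0)$, giving the a priori bound $M(t)\le\max\{V(x(s)),\rho_z(|z(s)|)\}$ for a class $\mathcal{K}$ function $\rho_z$, hence boundedness of $x$ on $[0,T)$. Iterating over successive time windows, on each of which $\nu_z$ is bounded by its decreasing value at the window's start, the displayed scalar inequality drives $V(x(\cdot))$ down a decreasing staircase and yields $V(x(t))\le\max\{\beta_1(V(x(s)),t-s),\beta_2(|z(s)|,t-s)\}$ for suitable $\mathcal{KL}$ functions $\beta_1,\beta_2$ (built from $\alpha^{-1}$ with $\alpha(v)=c_V\mathfrak{H}(v^{a_0},v^{a_\infty})\in\mathcal{K}_\infty$, and from $\beta_\delta$). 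Finally, $V$ being positive definite and proper, there are $\underline\alpha,\overline\alpha\in\mathcal{K}_\infty$ with $\underline\alpha(|x|)\le V(x)\le\overline\alpha(|x|)$, and $\beta_x(r,\tau):=\underline\alpha^{-1}(2\beta_1(\overline\alpha(r),\tau))+\underline\alpha^{-1}(2\beta_2(r,\tau))$ gives the claimed $|x(t)|\le\beta_x(|(x(s),z(s))|,t-s)$; estimate (\ref{LP11}), not needed for this bound on $x$, then propagates a companion $\mathcal{KL}$ bound on $z$. The step I expect to be the real obstacle is the one in the second paragraph --- showing that $g(c_G)\to0$ \emph{uniformly over the whole state space}, which forces separate analyses near the origin (degrees governed by $r_0,d_0,d_{V_0}$), near infinity ($r_\infty,d_\infty,d_{V_\infty}$) and on a compact intermediate range --- and this is precisely what homogeneity in the bi-limit and the gluing function $\mathfrak{H}$ are designed to make routine.
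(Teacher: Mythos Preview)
Your overall strategy is sound and close in spirit to the paper's, but there is one genuine gap and one place where the paper's packaging is materially simpler.

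\textbf{The gap: the uniform bound $g(c_G)\to 0$.} You invoke Corollary~\ref{b25} (and Lemma~\ref{3}) to get a constant dominating the loop term by $\mathfrak{H}(M^{a_0},M^{a_\infty})$, and then assert that this constant carries a factor $c_G^{\theta}$ because the approximating functions do. But Corollary~\ref{b25} only produces \emph{some} constant for \emph{fixed} functions; it says nothing about how that constant scales with a parameter hidden inside those functions. Your observation that the $0$- and $\infty$-limits of the ratio behave like $c_G^{b_{0,j}}$ and $c_G^{b_{\infty,j}}$ is correct, but that controls only two asymptotic regimes; you still owe a uniform bound on the intermediate range $M\in[\epsilon,1/\epsilon]$ that goes to $0$ with $c_G$, and Corollary~\ref{b25} does not deliver this automatically. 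The paper closes precisely this gap by an explicit computation: writing $\mu_i(s)=\mathfrak{H}(s^{q_i},s^{p_i})$, it proves the elementary scaling inequality $\mu_i(cs)\le C_i(c)\,\mu_i(s)$ for all $s\ge 0$, with an explicit class-$\mathcal{K}_\infty$ function $C_i$. This is how $c_G$ is isolated uniformly over the whole range, and it is the step you should add.

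\textbf{A different packaging of the endgame.} The paper does not manipulate the differential inequality for $V$ with a running supremum $M(\cdot)$ as you do. Instead it first recasts (\ref{b98}) as a linear-gain ISS estimate from $\sum_i\mu_i(|\delta_i|)$ to $\mathfrak{H}\big(|x|_{r_0}^{\dr_0+d_{V_0}},|x|_{r_\infty}^{\dr_\infty+d_{V_\infty}}\big)$ (via Corollary~\ref{b25} and \cite{Sontag-Wang}), then uses (\ref{b96}) together with the scaling inequality above and a second application of Corollary~\ref{b25} to bound the feedback channel $x\mapsto\delta$ also by a linear gain in the same ``currency''. With both gains linear, the condition on $c_G$ becomes the single algebraic inequality $\frac{2c_1c_\delta}{c_V}\cdot m\,\max_i C_i(c_G)\,c_2<1$, and the conclusion follows directly from the Jiang--Teel--Praly small-gain theorem. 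Your trajectory/running-max argument can be made to work, but it reproves a special case of that theorem; the paper's route is shorter and avoids the bookkeeping of your ``staircase'' iteration.

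In summary: your architecture (homogeneity to match degrees, then small gain) is the right one; what is missing is the explicit scaling $\mu_i(c\,\cdot)\le C_i(c)\,\mu_i(\cdot)$ that makes the $c_G$-dependence uniform, and the final step is more cleanly handled by reducing to linear gains and citing the standard small-gain theorem rather than arguing directly on $\dot V$.
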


The proof is given in Appendix \ref{c04}.
\example
\label{c53}
An interesting case which can be dealt with by Corollary \ref{b36}
is when the $\delta _i$'s are outputs of auxiliary systems with state $z _i$ in $\RR^{n_i}$, i.e~:
\begin{equation}
\label{c15}
\delta _i(t)\,:=\,\delta _i(z_i(t),x(t)) \quad,\qquad \dot z _i \;=\; g _i(z _i,x)\ .
\end{equation}
It can be
checked that the bounds (\ref{b96}) and (\ref{LP11})
are satisfied
by all the solutions of (\ref{c26}) and (\ref{c15}) if there exist
positive definite and radially unbounded functions
%
$Z _i~:\RR^{n_i}\rightarrow \RR_+$, class $\KR$ functions $\omega_1$,
$\omega _2$
and $\omega _3$,
a positive real number $\epsilon$ in $(0,1)$
such that for all $x$ in $\RR^n$, for all $i$ in $\{1, \dots, m\}$ and $z_i$ in $\RR^{n_i}$, we have~:
\begin{eqnarray*}
&\displaystyle
|\delta _i(z _i,x)|\,\leq\,\omega _1(x)+\omega _2(Z _i(z_i))
\quad,\qquad \frac{\partial Z _i}{\partial z _i}(z _i)\,g _i(z _i,x)
\;\leq\; -Z _i(z _i) \;+\; \omega _3(|x|)
\  ,
\\
&\displaystyle
\omega _1(x) + \omega _2\left([1+\epsilon]\,\omega _3(|x|)\right)\;\leq\;
c_G\mathfrak H\left(|x|_{r_0}^{\mathfrak{r}_{0,i}},|x|_{r_\infty}^{\mathfrak{r}_{\infty,i}}
\right)
\ .
\end{eqnarray*}
\par\vspace{1em}
Another important result exploiting Theorem \ref{b26}
deals with finite time convergence of solutions to the origin when this is a globally asymptotically stable equilibrium (see \cite{Bhat-Bernstein-2000}).
It is well known that when the origin of the homogeneous approximation in the $0$-limit is globally asymptotically stable and with a strictly negative degree then solutions converge to the origin in finite time (see \cite{Bhat-Bernstein}).
We extend this result by showing that if, furthermore the origin of the homogeneous approximation  in the $\infty$-limit is globally asymptotically stable with strictly positive degree then the convergence time  doesn't depend on the initial condition.
This is expressed by the following corollary.
\begin{corollary}[Uniform and Finite Time Convergence]\label{b64}
Under the hypotheses of Theorem \ref{b26}, if we have $\dr_\infty>0 > \dr_0$,
then all solutions of the system $\dot x=f(x)$ converge in finite time to the origin, uniformly in the initial condition.
\end{corollary}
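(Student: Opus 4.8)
The plan is to run a Lyapunov/comparison argument on the homogeneous in the bi-limit Lyapunov function produced by Theorem~\ref{b26}, splitting the analysis into the region far from the origin — where the $\infty$-limit degree, being positive, forces the Lyapunov function down to a fixed level in a time independent of the initial condition — and the region close to the origin — where the $0$-limit degree, being negative, forces finite-time extinction.

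First I would apply Theorem~\ref{b26} with some $d_{V_0}>\max_{1\le i\le n}r_{0,i}$ and $d_{V_\infty}>\max_{1\le i\le n}r_{\infty,i}$, obtaining a $C^1$, positive definite and proper $V$. Using properties~P1 and~P3 of Section~\ref{b55}, the negative definite function $x\mapsto\frac{\partial V}{\partial x}(x)f(x)$ is then homogeneous in the bi-limit with weights $r_0,r_\infty$, degrees $d_{V_0}+\dr_0$ and $d_{V_\infty}+\dr_\infty$, and negative definite approximating functions $\frac{\partial V_0}{\partial x}f_0$ and $\frac{\partial V_\infty}{\partial x}f_\infty$. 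Set $\alpha_0=\frac{d_{V_0}+\dr_0}{d_{V_0}}$ and $\alpha_\infty=\frac{d_{V_\infty}+\dr_\infty}{d_{V_\infty}}$. Since $\dr_0+r_{0,i}\ge0$ and $d_{V_0}>r_{0,i}$ we have $d_{V_0}+\dr_0>0$, so $\alpha_0>0$, while $\dr_0<0$ gives $\alpha_0<1$; symmetrically $\dr_\infty>0$ gives $\alpha_\infty>1$, hence $0<\alpha_0<1<\alpha_\infty$. Next, mimicking Example~\ref{c23} (but revisiting the dilation limits, as $V$ is homogeneous in the bi-limit, not homogeneous), the map $x\mapsto\mathfrak{H}\bigl(V(x)^{\alpha_0},V(x)^{\alpha_\infty}\bigr)$ is positive definite and homogeneous in the bi-limit with weights $r_0,r_\infty$, degrees $d_{V_0}+\dr_0$ and $d_{V_\infty}+\dr_\infty$, and positive definite approximations $V_0^{\alpha_0}$, $V_\infty^{\alpha_\infty}$. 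Comparing it with $-\frac{\partial V}{\partial x}f$ through Corollary~\ref{b25} — equivalently, reading inequality~(\ref{b98}) of Corollary~\ref{b51} with $\delta=0$ — yields a constant $c_V>0$ with
\[
\frac{\partial V}{\partial x}(x)\,f(x)\;\le\;-\,c_V\,\mathfrak{H}\bigl(V(x)^{\alpha_0},V(x)^{\alpha_\infty}\bigr)\qquad\forall\; x\in\RR^n .
\]

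Then I would carry out the scalar comparison on $v(t):=V(x(t))$ along an arbitrary maximal solution (which, by properness of $V$ and $\dot v\le0$, is defined on $[0,+\infty)$). From $\mathfrak{H}(a,b)=\frac{a}{1+a}(1+b)$ one has $\mathfrak{H}(a,b)\ge\frac a2$ when $0\le a\le1$ and $\mathfrak{H}(a,b)\ge\frac b2$ when $a\ge1$. Hence, as long as $v\ge1$, $\dot v\le-\frac{c_V}{2}\,v^{\alpha_\infty}$ with $\alpha_\infty>1$; since $\int_1^{+\infty}s^{-\alpha_\infty}\,ds<+\infty$, the level $v=1$ is reached in time at most $\frac{2}{c_V(\alpha_\infty-1)}$, independently of $v(0)$. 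Once $v\le1$, $\dot v\le-\frac{c_V}{2}\,v^{\alpha_0}$ with $\alpha_0<1$, so $v$ reaches $0$ within the further time $\frac{2}{c_V(1-\alpha_0)}$. As $v$ is nonincreasing and nonnegative, $V(x(t))=0$, hence $x(t)=0$, for every $t\ge T^\star:=\frac{2}{c_V(\alpha_\infty-1)}+\frac{2}{c_V(1-\alpha_0)}$, and $T^\star$ is independent of the initial condition.

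The step I expect to be the main obstacle is the third one in the second paragraph: checking carefully that $x\mapsto\mathfrak{H}(V(x)^{\alpha_0},V(x)^{\alpha_\infty})$ is genuinely homogeneous in the bi-limit with the stated degrees (so that Corollary~\ref{b25} applies), which forces one to redo the limit computations of Example~\ref{c23} with $V$ in place of a homogeneous function. Everything else is a routine one-dimensional ODE estimate; the only mild care needed is that $f$ is merely continuous, so solutions need not be unique — but the argument uses only the monotonicity of $t\mapsto V(x(t))$ and therefore applies to every maximal solution.
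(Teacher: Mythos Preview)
Your proposal is correct and follows essentially the same route as the paper: obtain from Theorem~\ref{b26} a differential inequality $\dot V\le -c\,\psi(V)$ with $\psi$ behaving like $V^{\alpha_\infty}$ ($\alpha_\infty>1$) for large $V$ and like $V^{\alpha_0}$ ($\alpha_0<1$) for small $V$, then integrate in two stages. The only difference is cosmetic: the paper takes $\psi(V)=V^{\alpha_0}+V^{\alpha_\infty}$ (homogeneous in the bi-limit by P2 since $\alpha_0<\alpha_\infty$), which immediately yields both one-sided bounds $\dot V\le -cV^{\alpha_0}$ and $\dot V\le -cV^{\alpha_\infty}$ without any case split, whereas you take $\psi(V)=\mathfrak{H}(V^{\alpha_0},V^{\alpha_\infty})$ and then extract these bounds from elementary estimates on $\mathfrak{H}$; your flagged ``main obstacle'' therefore disappears if you switch to the plain sum, and in any case it is already handled in the paper (the same function $\mathfrak{H}(V^{\alpha_0},V^{\alpha_\infty})$ appears as $\eta_2$ in the proof of Corollary~\ref{b51}).
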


The proof is given in Appendix \ref{c05}.
\section{Recursive observer design for a chain of integrators}
\label{b69}

The notion of homogeneity in the bi-limit is instrumental to introduce a new observer design method.
Throughout this section we consider a chain of integrators, with state
$\XR_n\,=\,(\chi_1,\dots,\chi_n)$ in $\RR^n$, namely~:
\begin{equation}
\label{5}
\null \qquad \dot \chi_1 \,=\, \chi_2\ ,\
\dots\ , \
\dot \chi_n \,=\, u\quad\textrm{or in compact form} \qquad
\dot \XR_n \;=\;\SR_n\,  \XR_n\,+\, B_n\,  u\ ,
\end{equation}
where $\SR_n$ is the shift matrix
of order $n$, i.e. $\SR_n\,  \XR_n\;=\;
\left(\chi_{2},\dots,\chi_n,0\right)^T$ and $B_n=(0,\dots,0,1)^T$.
By selecting arbitrary vector field degrees $\dr_0$ and $\dr_\infty$ in
$\left(-1,\frac{1}{n-1}\right)$, we see that, to possibly obtain
homogeneity in the bi-limit of the associated vector field, we must choose the
weights $r_0=(r_{0,1},\dots, r_{0,n})$ and  $r_\infty=(r_{\infty,1},\dots, r_{\infty,n})$
as~:
\begin{equation}\label{b60}
\renewcommand{\arraystretch}{1.3}
\begin{array}{ccccccc}
r_{0,n}  &=& 1\  ,\quad r_{0,i}&=& r_{0,i+1}\,-\,\dr_0&=& 1 \,-\, \dr_0 \,(n-i)\ ,\\
r_{\infty,n} &=&  1\  ,\quad r_{\infty,i} &=&  r_{\infty,i+1}\,-\,\dr_\infty &=&  1 \,-\, \dr_\infty \,(n-i)\ .
\end{array}
\end{equation}
The goal of this section is to introduce a global homogeneous in the
bi-limit observer for the system (\ref{5}).
This design follows a recursive method, which constitutes one
of the main contribution of this paper.

The idea of designing an observer recursively starting from $\chi_n$ and going backwards towards $\chi_1$ is not new.
It can be found for instance in
\cite{Qian-Lin-2006,Qian,Praly-Jiang2,shim-seo,Yang-Lin}) and \cite[Lemma
6.2.1]{Gauthier-Kupka}).
Nevertheless, the procedure we propose is new, and extends
to the homogeneous in the bi-limit case the results in \cite[Lemmas 1 and 2]{Praly-Jiang2}.

Also, as opposed to what is proposed in \cite{Qian-Lin-2006,Qian}\footnote{Note the term $x_i$ in (3.15) of \cite{Qian-Lin-2006} for instance.}, this observer
is an exact observer (with any input $u$) for a chain of integrators.
The observer is given by the system\footnote{%
To simplify the presentation, we use the compact notation $K_1(\hat\chi_1-\chi_1)$
for what should be $K_1(\hat\chi_1-\chi_1,0\ldots,0)$.
}~:
\begin{equation}
\label{23}
\dot {\hat \XR}_n \;=\; \SR_n\,  {\hat \XR}_n\,+\, B_n\,  u\ \,+\, K_1(\hat \chi_1 - \chi_1)
\end{equation}
with state $\hat \XR_n\,=\, (\hat \chi_1, \dots, \hat \chi_n)$, and
where $K_1~:\RR^n\rightarrow\RR^n$ is a homogeneous in the bi-limit vector field
with weights $r_0$ and $r_\infty$, and degrees $\dr_0$ and $\dr_\infty$.
The output injection

 vector field

$K_1$ has to be selected such that the origin is a globally asymptotically stable equilibrium
for the system ~:
\begin{equation}\label{c44}
\dot E_1 \;=\; \SR_n\,  E_1 \,+\, K_1(e_1)\quad, \qquad E_1\;=\;(e_1,\dots e_n)^T \ ,
\end{equation}
and also for its homogeneous approximations.
The construction of $K_1$ is performed via a recursive procedure
whose induction argument is as follows.

Consider the system on $\RR^{n-i}$ given by~:
\begin{equation}\label{b34}
\dot E_{i+1} \;=\; \SR_{n-i}\,  E_{i+1} + K_{i+1}(e_{i+1})\quad,
\qquad E_{i+1}\,=\,(e_{i+1}, \dots, e_n)^T\ ,
\end{equation}
with $\SR_{n-i}$ the shift matrix of order $n-i$, i.e.
$\SR_{n-i}\,E_{i+1} = \left(e_{i+2},\dots,e_n,0\right)^T$,
and $K_{i+1}~:\RR^{n-i}\rightarrow\RR^{n-i}$ a  homogeneous in the bi-limit vector field,
whose associated
triples are $\left((r_{0,i+1},\dots, r_{0,n}), \dr_0, K_{i+1,0} \right)$  and
$\left((r_{\infty,i+1},\dots, r_{\infty,n}), \dr_\infty, K_{i+1,\infty} \right)$.

\begin{theorem}[Homogeneous in the bi-limit observer design]
\label{theo2}
Consider the system (\ref{b34}) and its homogeneous approximation at infinity and around the origin~:
$$
\dot E_{i+1} = \SR_{n-i}\,  E_{i+1} + K_{i+1,0}(e_{i+1})\;,\quad
\dot E_{i+1} = \SR_{n-i}\,  E_{i+1} + K_{i+1,\infty}(e_{i+1})\ .
$$
Suppose the origin is a globally asymptotically stable equilibrium for these
systems.
Then there exists a homogeneous in the bi-limit vector field
$K_i~:\RR^{n-i+1}\rightarrow\RR^{n-i+1}$,
with associated triples $\left((r_{0,i},\dots, r_{0,n}), \dr_0, K_{i,0} \right)$ and $\left((r_{\infty,i},\dots, r_{\infty,n}), \dr_\infty, K_{i,\infty} \right)$,
such that the origin is a globally asymptotically stable equilibrium for the systems~:
\begin{eqnarray}
\nonumber
\dot E_{i} &=& \SR_{n-i+1}\,  E_{i} + K_{i}(e_{i})\  ,
\\\label{LP1}
\dot E_{i} &=&
\SR_{n-i+1}\,  E_{i} + K_{i,0}(e_{i})\quad, \qquad E_{i}\,=\,(e_{i}, \dots, e_n)^T\ ,
\\\nonumber
\dot E_{i} &=& \SR_{n-i+1}\,  E_{i} + K_{i,\infty}(e_{i})
\  .
\end{eqnarray}
\end{theorem}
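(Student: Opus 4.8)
The plan is to augment the lower-dimensional observer by one coordinate at the top of the chain (the $e_i$-coordinate), adding an output-injection term $k_i(e_i)$ to be designed, and then to build a Lyapunov function for the augmented error system out of the Lyapunov function supplied by the induction hypothesis together with a cross-term penalizing the mismatch between the new state $e_{i+1}$ and a ``virtual output injection'' $v_i(e_i)$. Concretely, I would set $K_i(e_i) = \bigl(v_i(e_i),\, K_{i+1}(e_{i+1}-v_i(e_i))\bigr)^T + \bigl(0,\dots,0,k_i(e_i)\bigr)^T$ — i.e. the classical backstepping/observer-error change of coordinates $\bar E_{i+1} := E_{i+1} - (v_i(e_i),0,\dots,0)^T$ (interpreting the shift appropriately) so that, in the new coordinates, the $\bar E_{i+1}$-subsystem is driven by $e_i$ only through the homogeneous term $K_{i+1}$, and the $e_i$-equation reads $\dot e_i = e_{i+1} + k_i(e_i) = \bar e_{i+1} + v_i(e_i) + k_i(e_i)$. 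One chooses $v_i$ and $k_i$ as homogeneous in the bi-limit functions of $e_i$ with the degrees dictated by the weight relations (\ref{b60}), so that $K_i$ is homogeneous in the bi-limit with the claimed triples; in particular the ``change of coordinates'' map is itself homogeneous in the bi-limit and invertible, so GAS in $\bar E$-coordinates is equivalent to GAS in $E_i$-coordinates, simultaneously for the true system and its two approximations.

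\textbf{Lyapunov construction.} By the induction hypothesis and Theorem \ref{b26} applied to the $(n-i)$-dimensional system (\ref{b34}) (whose vector field and both approximations are GAS), there is a homogeneous in the bi-limit Lyapunov function $W_{i+1}(\bar E_{i+1})$ whose derivative along the three vector fields is negative definite and whose partial derivatives are homogeneous in the bi-limit of the prescribed degrees. I would then take
$$
W_i(E_i) \;=\; W_{i+1}(\bar E_{i+1}) \;+\; \int_{v_i(e_i)}^{e_i}\!\! \bigl(s - v_i(e_i)\bigr)^{\frac{2 d_{W}-\dots}{\dots}}\, ds
$$
— more honestly, a term of the form $\mathfrak H$ of two homogeneous positive definite functions of $e_i$ and $\bar e_{i+1}$ chosen (via Example \ref{c23} and Proposition \ref{b94}) so that $W_i$ is homogeneous in the bi-limit with the right degrees and its partials match the pattern required to feed the next induction step. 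Differentiating, $\dot W_i = \dot W_{i+1}|_{K_{i+1}} + (\text{cross terms from }\partial_{e_{i+1}}W_{i+1}\cdot \text{[extra drift]}) + (\partial W_i/\partial e_i)(\bar e_{i+1}+v_i(e_i)+k_i(e_i))$. The negative term $-\,(\text{pos.\ def.})$ coming from $\dot W_{i+1}$ handles the $\bar E_{i+1}$ directions; the free term $k_i(e_i)$ is chosen to dominate the sign-indefinite cross terms and to add a negative definite term in $e_i$.

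\textbf{Where the key technical lemma enters.} The cross terms are products of homogeneous in the bi-limit functions, hence (by P1) homogeneous in the bi-limit; the ``good'' negative terms are positive definite homogeneous in the bi-limit with appropriate degrees. The whole point is that one cannot simply complete the square with an explicit constant because the $0$-limit and $\infty$-limit degrees differ; instead I would invoke Lemma \ref{3} (or its Corollary \ref{b25}) with $\eta = \dot W_i + c\,\gamma$ form: choosing $\gamma$ to be a positive definite homogeneous in the bi-limit function of the natural degree and checking that on $\{\gamma = 0\}\setminus\{0\}$ — and on the analogous zero sets for the two approximations — $\eta$ is strictly negative (this uses GAS of the two approximating error systems, which kills the ``good'' part, while the cross terms vanish on those sets because they carry a factor that is part of $\gamma$). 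Lemma \ref{3} then yields one constant $c$ (hence one coefficient in $k_i$) making $\dot W_i$, $\dot W_{i,0}$ and $\dot W_{i,\infty}$ all negative definite at once. That simultaneous domination across the bi-limit is the crux; everything else is bookkeeping of weights and degrees via (\ref{b60}), P1–P3, Proposition \ref{b94} and Example \ref{c23}.

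\textbf{Main obstacle.} The delicate point is arranging the degrees of $v_i$, $k_i$ and the added Lyapunov term so that (i) $K_i$ genuinely has degree $\dr_0$ resp.\ $\dr_\infty$ with the weights $r_{0,i},r_{\infty,i}$ from (\ref{b60}) in \emph{both} limits simultaneously — this is what forces the specific exponents and is only consistent because of the way (\ref{b60}) propagates the weights — and (ii) the partial derivatives of $W_i$ land in exactly the homogeneity classes required to restart the induction, so that $W_i$ can play for step $i$ the role $W_{i+1}$ played for step $i+1$. Verifying that the zero-set inclusions in the hypotheses of Lemma \ref{3} hold for the approximating systems, using only the assumed GAS of $\dot E_{i+1}=\SR_{n-i}E_{i+1}+K_{i+1,0/\infty}(e_{i+1})$, is the second place where care is needed, since it requires that the approximation of the augmented closed loop really is the augmented closed loop of the approximation — a fact that follows from P2/P3 and the homogeneity of the change of coordinates, but must be checked.
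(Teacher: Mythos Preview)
Your high-level scaffolding is right: invoke Theorem~\ref{b26} on the induction hypothesis to get $W_{i+1}$, add a cross term to form a Lyapunov candidate for the augmented error, and close with Lemma~\ref{3} to pick one gain that works simultaneously for the system and both approximations. But there is a genuine structural gap in your construction of $K_i$.

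You write $K_i(e_i) = \bigl(v_i(e_i),\, K_{i+1}(e_{i+1}-v_i(e_i))\bigr)^T + \bigl(0,\dots,0,k_i(e_i)\bigr)^T$, which depends on $e_{i+1}$. This is not allowed: the whole point of the recursive observer is that $K_i$ is a function of the scalar $e_i$ \emph{alone} (ultimately only $e_1=\hat\chi_1-y$ is measured). Your change of coordinates $\bar E_{i+1}=E_{i+1}-(v_i(e_i),0,\dots,0)^T$ is the state-feedback backstepping move (cf.\ Theorem~\ref{theo1}), but it does not transplant to the observer side: shifting $e_{i+1}$ by a function of $e_i$ does not turn the $E_{i+1}$-subsystem into the standalone system $\dot E_{i+1}=\SR_{n-i}E_{i+1}+K_{i+1}(e_{i+1})$, because $K_{i+1}$'s argument is fixed by the definition of $K_i$, not by the state. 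Your description of $\dot e_i$ is also inconsistent with your own formula for $K_i$ (the first component is $v_i(e_i)$, not $k_i(e_i)$).

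The paper's resolution is different from a coordinate shift. It sets
\[
K_i(e_i)\;=\;\begin{pmatrix}-q_i(\ell e_i)\\[2pt] K_{i+1}\bigl(q_i(\ell e_i)\bigr)\end{pmatrix},
\]
so that $K_{i+1}$ is evaluated at the \emph{surrogate} $q_i(\ell e_i)$ for $e_{i+1}$, built from $e_i$ only. No extra additive $k_i$ is introduced; the single scalar gain $\ell$ is what Lemma~\ref{3} tunes. The Lyapunov candidate is $W_{i+1}(E_{i+1})$ plus an integral in the variable $s=\ell e_i$ with lower limit $q_i^{-1}(e_{i+1})$ (not in $e_i$ with lower limit $v_i(e_i)$ as you sketch), so that its derivative splits as $T_1-\ell\,T_2$ with $T_2\ge 0$ vanishing exactly on $\{q_i(\ell e_i)=e_{i+1}\}$; on that set $T_1$ collapses to $\frac{\partial W_{i+1}}{\partial E_{i+1}}(\SR_{n-i}E_{i+1}+K_{i+1}(e_{i+1}))<0$, and likewise for the two approximations. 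This is precisely the zero-set inclusion hypothesis of Lemma~\ref{3}, yielding $\ell^*$. The mechanism you anticipated---``on $\{\gamma=0\}$ the cross terms vanish and the good derivative survives''---is exactly right, but the set $\{\gamma=0\}$ is $\{q_i(\ell e_i)=e_{i+1}\}$, reached by the surrogate construction and not by a change of coordinates.
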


\begin{proof}
We prove this result in two steps.
First we define an homogeneous in the bi-limit Lyapunov function.
Then we construct the vector field $K_i$,  depending on a parameter
$\ell$ which, if sufficiently large, renders negative definite the
derivative of this Lyapunov function along the
solutions of the system.
\par\vspace{1em}\noindent
\textbf{1. Definition of the Lyapunov function~:}
Let $d_{W_0}$ and $d_{W_\infty}$ be positive real numbers satisfying~: \\[0.5em]
\refstepcounter{equation}$(\theequation)$\label{c29}\hfill$d_{W_0} > 2\,\max_{1 \leq j\leq\,n}r_{0,j}+\dr_0$ \quad , \qquad$d_{W_\infty} > 2\,\max_{1 \leq j\leq\,n}r_{\infty,j}+\dr_\infty\ ,$  \hfill\null\\[0.5em]
and
\begin{equation}\label{c28}
\frac{d_{W_\infty}}{r_{\infty,i}}\; \geq\;  \frac{d_{W_0}}{r_{0,i}} \  .
\end{equation}
The selection (\ref{b60}) implies $r_{0,j}+\dr_{0}>0$ and $r_{\infty,j}+\dr_{\infty}>0$ for each $j$ in $\{1, \dots, n\}$.
Hence,
$$d_{W_0} > \max_{1 \leq j\leq\,n}r_{0,j}
\quad, \qquad d_{W_\infty} >  \max_{1 \leq j\leq\,n}r_{\infty,j} \ ,$$
and we can invoke Theorem \ref{b26} for the system (\ref{c44}) and its homogeneous approximations
given in (\ref{b34}). This implies that there exists a $C^1$, positive definite and proper function
$W_{i+1}~:\RR^{n-i}\rightarrow\RR_+$ such that, for each $j$ in $\{i+1, \dots, n\}$,
the function  $\frac{\partial W_{i+1}}{\partial e_j}$ is homogeneous in the
bi-limit with associated triples
$\left( (r_{0,i+1},\dots,r_{0,n}), d_{W_0}-r_{0,j}, \frac{\partial
W_{i+1,0}}{\partial e_j} \right)$
and
$\Big( (r_{\infty,i+1},\dots,r_{\infty,n}), d_{W_\infty}-r_{\infty,j},$ $ \frac{\partial W_{i+1,\infty}}{\partial e_j} \Big)$.
Moreover, for all $E_{i+1} \in \RR^{n-i}\setminus\{0\}$, we have~:
\begin{eqnarray}
\nonumber
\frac{\partial W_{i+1}}{\partial
E_{i+1}}(E_{i+1})\,\left( \SR_{n-i}\,  E_{i+1} + K_{i+1}(e_{i+1}) \right) &<& 0\ ,
\\\label{b41}
\frac{\partial W_{i+1,0}}{\partial
E_{i+1}}(E_{i+1})\,\left( \SR_{n-i}\,  E_{i+1} + K_{i+1,0}(e_{i+1}) \right)&<& 0\  ,
\\\nonumber
\null \qquad \quad \frac{\partial W_{i+1,\infty}}{\partial
E_{i+1}}(E_{i+1})\,\left( \SR_{n-i}\,  E_{i+1} + K_{i+1,\infty}(e_{i+1}) \right)&<& 0
\  .
\end{eqnarray}
Consider the function $q_i:\RR\to\RR$ defined as~:
\begin{equation}\label{c31}
q_i(s) \;=\;\left\{
\begin{array}{ll}
\frac{r_{0,i}}{r_{0,i}+\dr_{0}}\;s^\frac{r_{0,i}+\dr_{0}}{r_{0,i}}\ , &\quad |s|\leq 1\ ,\\
\frac{r_{\infty,i}}{r_{\infty,i}+\dr_{\infty}}s^\frac{r_{\infty,i}+\dr_{\infty}}{r_{\infty,i}} + \frac{r_{0,i}}{r_{0,i}+\dr_{0}} -  \frac{r_{\infty,i}}{r_{\infty,i}+\dr_{\infty}}\ ,&\quad |s|\geq 1\ .
\end{array}
 \right.
\end{equation}
Since we have
$0<r_{0,i}+\dr_{0}$ and $0<r_{\infty,i}+\dr_{\infty}$ ,
this function is well defined and continuous on $\RR$, strictly increasing and onto, and $C^1$ on $\RR\setminus\{0\}$.
Furthermore, it is by construction homogeneous in the bi-limit with approximating continuous functions
$\frac{r_{0,i}}{r_{0,i}+\dr_{0}}s^\frac{r_{0,i}+\dr_{0}}{r_{0,i}}$
and
$\frac{r_{\infty,i}+\dr_{\infty}}{r_{\infty,i}}s^\frac{r_{\infty,i}+\dr_{\infty}}{r_{\infty,i}}$.
The inverse function  $q_i^{-1}$ of $q_i$ is defined as~:
$$
q_i^{-1}(s) \;=\;\left\{
\begin{array}{ll}
\left(\frac{r_{0,i}+\dr_{0}}{r_{0,i}}\;s\right)^\frac{r_{0,i}}{r_{0,i}+\dr_{0}}\ , &\quad |s|\leq \frac{r_{0,i}+\dr_{0}}{r_{0,i}}\ ,\\
\left(\left(s - \frac{r_{0,i}}{r_{0,i}+\dr_{0}} +  \frac{r_{\infty,i}}{r_{\infty,i}+\dr_{\infty}}\right)\frac{r_{\infty,i}+\dr_{\infty}}{r_{\infty,i}}\right)^\frac{r_{\infty,i}}{r_{\infty,i}+\dr_{\infty}}\ ,&\quad |s|\geq \frac{r_{0,i}+\dr_{0}}{r_{0,i}}\ .
\end{array}
 \right.
$$
By (\ref{c28}) the function~:
\begin{equation}\label{b93}
s\;\mapsto\;q_i^{-1}(s)^{\frac{d_{W_0}-r_{0,i}}{r_{0,i}}}\,+\, q_i^{-1}(s)^{\frac{d_{W_\infty}-r_{\infty,i}}{r_{\infty,i}}}
\end{equation}
is homogeneous  in the bi-limit with associated approximating functions $\left(\frac{r_{0,i}+\dr_{0}}{r_{0,i}}s\right)^\frac{d_{W_0}-r_{0,i}}{r_{0,i}+\dr_{0}}$
and $\left(\frac{r_{\infty,i}+\dr_{\infty}}{r_{\infty,i}}s\right)^\frac{d_{W_\infty}-r_{\infty,i}}{r_{\infty,i}+\dr_{\infty}}$.
Furthermore, by (\ref{c29}), it is $C^1$ on $\RR$ and its derivative is homogeneous in the bi-limit with continuous approximating functions \\[0.5em]
$s\mapsto\frac{d_{W_0}-r_{0,i}}{r_{0,i}}\left|\frac{d_{W_0}-r_{0,i}}{r_{0,i}+\dr_0}s\right|^\frac{d_{W_0}-2r_{0,i}-\dr_0}{r_{0,i}+\dr_0}$\hfill,\hfill$
s\mapsto\frac{d_{W_\infty}-r_{\infty,i}}{r_{\infty,i}}\left|\frac{d_{W_\infty}-r_{\infty,i}}{r_{\infty,i}+\dr_\infty}s\right|^\frac{d_{W_\infty}-2r_{\infty,i}-\dr_\infty}{r_{\infty,i}+\dr_\infty}\ .$\\[0.5em]
Let $\barW _i~:\RR^{n-i+1}\rightarrow\RR_+$ be defined by
\\[1em]\null \quad
$\displaystyle
\barW _i(E_{i+1}, s) \;=\; W_{i+1}(E_{i+1}) \;+\;\int^{s}_{q_i^{-1}(e_{i+1})} \left( h^{\frac{d_{W_0}-r_{0,i}}{r_{0,i}}}\;+\;
h^{\frac{d_{W_\infty}-r_{\infty,i}}{r_{\infty,i}}} \right)dh
$\hfill\null\\[0.6em]
\null\hfill$\displaystyle
-\int^{s}_{q_i^{-1}(e_{i+1})}\left(
q_i^{-1}(e_{i+1})^{\frac{d_{W_0}-r_{0,i}}{r_{0,i}}}
+
q_i^{-1}(e_{i+1})^{\frac{d_{W_\infty}-r_{\infty,i}}{r_{\infty,i}}} \right)dh .
\null $\\[1em]
This function is $C^1$ and by (\ref{c28}), Proposition \ref{b94} yields that it is homogeneous in the bi-limit with weights $(r_{0,i+1}, \dots, r_{0,n})$ and $(r_{\infty,i+1}, \dots, r_{\infty,n})$ for $E_{i+1}$,  $r_{0,i}$ and $r_{\infty,i}$ for $s$, and degrees $d_{W_0}$ and $d_{W_\infty}$.
Furthermore, for each $j$ in $\{i+1, \dots, n\}$, the functions $\frac{\partial \barW _i}{\partial e_{j}}(E_{i+1}, s)$ are also homogeneous in the bi-limit with the same weights, and degrees $d_{W_0}-r_{0,j}$ and $d_{W_\infty}-r_{\infty,j}$.

\vspace{1em}\noindent
\textbf{2. Construction of the vector field $K_i$~:}
Given a positive real number $\ell$, we define the vector field $K_i~:\RR^{n-i}\rightarrow\RR^{n-i}$ as~:
\\[0.410em]\null \hfill $\displaystyle
$$K_i(e_i)=\left(\begin{array}{c}-q_i(\ell
e_i)\\K_{i+1}(q_i(\ell e_i))
\end{array}\right)$\hfill \null \\[0.410em]
By Propositions \ref{prop1} and the properties we have established for $q_i$, $K_i$ is a homogeneous in the
bi-limit vector field.
We show now that  selecting $\ell$ large enough yields the asymptotic stability properties.
To begin with, note that for all $E_i = (E_{i+1}, e_i)$ in $\RR^{n-i}$~:
$$
\frac{\partial \barW _i(E_{i+1}, \ell e _{i})}{\partial E_i}(E_i)\left(\SR_{n-i+1}\,  E_{i} + K_{i}(e_{i})\right)\;\leq\; T_1(E_{i+1}, \ell e _{i})\;-\;\ell\,T_2(E_{i+1}, \ell e _{i})\ ,
$$
with the functions $T_1$ and $T_2$ defined as~:
\\[1em]$
T_1(E_{i+1}, \bare _{i}) \,=\,  \displaystyle \frac{\partial \barW _{i}}{\partial
E_{i+1}}(E_{i+1}, \bare _{i})\;\left(\SR_{n-i}\,  E_{i+1} + K_{i+1}(q_i(\bare _{i}))\right)
\  ,
$\hfill \null \\[0.7em]$
T_2(E_{i+1}, \bare _{i}) \,=\,
\left(\bare _{i}^\frac{d_{W_0}-r_{0,i}}{r_{0,i}}
- q_i^{-1}(e_{i+1})^\frac{d_{W_0}-r_{0,i}}{r_{0,i}}
+ \bare _{i}^\frac{d_{W_\infty}-r_{\infty,i}}{r_{\infty,i}}
- q_i^{-1}(e_{i+1})^\frac{d_{W_\infty}-r_{\infty,i}}{r_{\infty,i}}\right)
$\hfill \null \\\null \hfill $
\times(q_i(\bare _{i}) - e_{i+1}) \ .
$
\\[1em]
These functions are homogeneous in the bi-limit with weights
$(r_{\infty,i},\dots,r_{\infty,n})$ and $(r_{0,i},\dots, r_{0,n})$, degrees $\dr_0 + d_{W_0}$ and $\dr_\infty+d_{W_\infty}$, continuous approximating functions
\begin{eqnarray*}
T_{1,0}(E_{i+1}, \bare _{i}) &=& \frac{\partial \barW
_{i,0}}{\partial E_{i+1}}(E_{i+1}, \bare _{i})\;\left(\SR_{n-i}\,  E_{i+1} + K_{i+1,0}(q_{i,0}(\bare _{i}))\right)\ ,\\
T_{1,\infty}(E_{i+1}, \bare _{i}) &=& \frac{\partial \barW
_{i,\infty}}{\partial E_{i+1}}(E_{i+1}, \bare
_{i})\;\left(\SR_{n-i}\,  E_{i+1} + K_{i+1,\infty}(q_{i,\infty}(\bare _{i}))\right)\ ,
\end{eqnarray*}
and
\begin{eqnarray*}
T_{2,0}(E_{i+1}, \bare _{i}) &=&
\left(\bare _{i}^\frac{d_{W_0}-r_{0,i}}{r_{0,i}} - q_{i,0}^{-1}(e_{i+1})^\frac{d_{W_0}-r_{0,i}}{r_{0,i}} \right)(q_{i,0}(\bare _{i}) - e_{i+1})\ ,\\
T_{2,\infty}(E_{i+1}, \bare _{i}) &=&
\left(\bare _{i}^\frac{d_{W_\infty}-r_{\infty,i}}{r_{\infty,i}} - q_{i,\infty}^{-1}(e_{i+1})^\frac{d_{W_\infty}-r_{\infty,i}}{r_{\infty,i}} \right)(q_{i,\infty}(\bare _{i}) - e_{i+1})\ .\end{eqnarray*}
As the function $q_i^{-1}$ is continuous, strictly increasing and onto, the
function
$$
\bare _{i}^\frac{d_{W_0}-r_{0,i}}{r_{0,i}} - q_i^{-1}(e_{i+1})^\frac{d_{W_0}-r_{0,i}}{r_{0,i}}
+
\bare _{i}^\frac{d_{W_\infty}-r_{\infty,i}}{r_{\infty,i}} - q_i^{-1}(e_{i+1})^\frac{d_{W_\infty}-r_{\infty,i}}{r_{\infty,i}}
$$
has a unique zero at $q_i(\bare _{i})=e_{i+1}$ and has the same sign as
$q_i(\bare _{i})-e_{i+1}$. It follows that~:
\begin{eqnarray*}
T_2(E_{i+1}, \bare _{i}) &\geq & 0
\qquad \forall (E_{i+1}, \bare _{i}) \in  \RR^{n-i}\ ,
\\
T_2(E_{i+1},\bare _{i})&=&0\qquad\Rightarrow\qquad q_i(\bare _{i})\;=\;e_{i+1}\ .
\end{eqnarray*}
On the other hand, for all $E_{i}\,\neq\,0$,
$$
T_{1}(E_{i+1}, q_i^{-1}(e_{i+1}))
\;=\; \frac{\partial W_{i+1}}{\partial
E_{i+1}}(E_{i+1})\;\left(\SR_{n-i}\,  E_{i+1}
+ K_{i+1}(e_{i+1})\right) < 0
\ .
$$
Hence (\ref{b41}) yields~:
\\[1em]$
\left\{(E_{i+1},\bare _{i})\in\RR^{n-i+1}\setminus\{0\}\: :\;
T_2(E_{i+1},\bare _{i})\;=\;0 \right\}
$\hfill \null \\[0.7em]\null\hfill$
\subseteq\qquad
\left\{(E_{i+1},\bare _{i})\in\RR^{n-i+1}\: :\; T_{1}(E_{i+1},\bare
_{i})\;<\;0\right\}
\  .
$\\[1em]
By following
the same argument,
it can be shown
that this property holds also for the homogeneous approximations, i.e.~:
\\[1em]$
\left\{(E_{i+1},\bare _{i})\in\RR^{n-i+1}\setminus\{0\}\: :\;
T_{2,0}(E_{i+1},\bare _{i})\;=\;0 \right\}
$\hfill \null \\[0.7em]\null\hfill$
\subseteq\qquad
\left\{(E_{i+1},\bare _{i})\in\RR^{n-i+1}\: :\; T_{1,0}(E_{i+1},\bare
_{i})\;<\;0\right\}
\  ,
$\\[0.7em]$
\left\{(E_{i+1},\bare _{i})\in\RR^{n-i+1}\setminus\{0\}\: :\;
T_{2,\infty }(E_{i+1},\bare _{i})\;=\;0 \right\}
$\hfill \null \\[0.7em]\null\hfill$
\subseteq\qquad
\left\{(E_{i+1},\bare _{i})\in\RR^{n-i+1}\: :\; T_{1,\infty }(E_{i+1},\bare
_{i})\;<\;0\right\}
\  .
$\\[1em]
%
Therefore, by Lemma \ref{3},  there exists $\ell^*$ such
that, for all $\ell \geq  \ell^*$ and all
$(E_{i+1},\bare _{i})\,\neq\,0$~:
\begin{eqnarray*}
T_1(E_{i+1}, \bare _{i})\;-\;\ell\,T_2(E_{i+1}, \bare _{i}) &<& 0\ ,\\
T_{1,0}(E_{i+1}, \bare _{i})\;-\;\ell\,T_{2,0}(E_{i+1}, \bare _{i}) &<& 0\ ,\\
T_{1,\infty}(E_{i+1}, \bare _{i})\;-\;\ell\,T_{2,\infty}(E_{i+1}, \bare _{i}) &<& 0\ .
\end{eqnarray*}
This implies that the origin
is a globally  asymptotically stable equilibrium of the systems (\ref{LP1}),

which concludes the proof.
\end{proof}

To construct the function $K_1$, which defines the observer (\ref{23}), it is sufficient to iterate the
construction proposed in Theorem \ref{theo2} starting from
$$
K_n(e_n) \;=\;-\left\{
\begin{array}{ll}
\frac{1}{1+\dr_{0}}\;(\ell_ne_n)^{1+\dr_{0}} \ ,&\quad |\ell_ne_n|\leq 1\ ,\\[0.1em]
\frac{1}{1+\dr_{\infty}}(\ell_ne_n)^{1+\dr_{\infty}}+ \frac{1}{1+\dr_{0}} -\frac{1}{1+\dr_{\infty}}  \ ,&\quad |\ell_ne_n|\geq 1\ .
\end{array}
 \right.
$$
where $\ell_n$ is any strictly positive positive real number.
Indeed, $K_n$ is a homogeneous in the bi-limit vector field with approximating functions
$K_{n,0}(e_n)=\frac{1}{1+\dr_{0}}\;(\ell_ne_n)^{1+\dr_{0}}$ and
$K_{n, \infty}(e_n)=\frac{1}{1+\dr_{\infty}}(\ell_ne_n)^{1+\dr_{\infty}}$.
This selection implies that

the origin is a globally asymptotically stable of the systems $\dot e_n \;=\; K_n(e_n)$,
$\dot e_n \;=\; K_{n, 0}(e_n)$ and
$\dot e_n \;=\; K_{n, \infty}(e_n)$.

Consequently the assumptions of Theorem \ref{theo2} are
satisfied for $i+1=n$. We can apply it recursively up to $i=1$
obtaining the vector field $K_1$.

As a result of this procedure we obtain a homogeneous in the bi-limit observer which globally asymptotically observes the state of the system (\ref{5}), and also the state for its homogeneous approximations around the origin and at infinity.
In other word,

the origin is a globally asymptotically stable equilibrium of the systems

\begin{equation}\label{b37}
\dot E_1 = \SR_n  E_1 + K_1(e_1)\; ,\;
\dot E_1 = \SR_n  E_1 + K_{1,0}(e_1) \; ,\;
\dot E_1 = \SR_n  E_1 + K_{1,\infty}(e_1)\ .
\end{equation}

\remark \label{c32}
Note that when
$0\leq \dr_0  \leq \dr_\infty$,
we have
$1\leq \frac{r_{0,i}+\dr_{0}}{r_{0,i}} \leq \frac{r_{\infty,i}+\dr_{\infty}}{r_{\infty,i}}$
 for $i=1\dots,n$ and we can replace the function $q_i$ in (\ref{c31}) by the simpler function~:
$$
q_i(s)\;=\; s^\frac{r_{0,i}+\dr_{0}}{r_{0,i}}\;+\; s^\frac{r_{\infty,i}+\dr_{\infty}}{r_{\infty,i}}
$$
which has been used already in \cite{Andrieu-Praly-Astolfi-06}.

\example
Consider a chain of integrators of dimension two, with the following
weights and degrees~:
$$
\left(r_0, \, \dr_0\right) \;=\;  \Big((2-q,1),\,q-1\Big)
\quad ,\qquad
\left( r_\infty,\, \dr_\infty\right) \;=\; \Big((2-p,1),\, p-1 \Big)\ .
$$
When $q\geq p$ (i.e. $\dr_0\leq  \dr_\infty$), by following the above recursive observer design
we obtain two positive
real numbers
 $\ell_1$ and $\ell_2$ such that the system~:
$$
\dot {\hat \chi}_1 \;=\; \hat \chi_{2} - q_1(\ell_1 e_1)
\  ,\quad
\dot {\hat \chi}_{2} \;=\; u - q_2(\ell_2\,q_1(\ell_1e_1 ))\  ,
\  ,\quad
e_1 \;=\; \hat \chi_1 - y
\  .
$$
with,
\\[1em]
\refstepcounter{equation}\label{c48}  (\theequation)
\hfill $
q_2(s) =\left\{
\begin{array}{@{}ll@{}}
\frac{1}{q}\;s^q &,\, |s|\leq 1\\[0.2em]
\frac{1}{p}s^p + \frac{1}{q}-\frac{1}{p}&,\, |s|\geq 1
\end{array}
 \right.
\  ,\quad
q_1(s) =\left\{
\begin{array}{@{}ll@{}}
(2-q)\;s^\frac{1}{2-q} &,\, |s|\leq 1\\[0.2em]
(2-p)s^\frac{1}{2-p} +p-q&,\, |s|\geq 1
\end{array}\right.\ .
$\hfill \null \\[1em]
is a global observer for the system $\dot \chi_1=\chi _2\; ,\  \dot
\chi _2= u\; ,\  y=\chi_1$.
Furthermore, its homogeneous approximations around the origin and at infinity
are also global observers for the same system.

\section{Recursive design of a homogeneous in the bi-limit state feedback}
\label{b99}
It is well-known that the system (\ref{5}) can be rendered
homogeneous by using a stabilizing homogeneous
state feedback which can be designed by backstepping (see
\cite{Praly-AndreaNovel-Coron,Praly-Mazenc,Morin-Samson,Qian,Tzamtzi-Tsinias,Hong}
for
instance).
We show in this section that this property can be extended to the
case of homogeneity in the bi-limit. More precisely, we show that there exists a homogeneous in the bi-limit function
$\phi_n$ such that the system (\ref{5}) with $u\,=\,\phi_n(\XR_n)$ is homogeneous in the bi-limit,
with weights $r_0$ and $r_\infty$ and degrees $\dr_0$ and $\dr_\infty$.
Furthermore, its origin and the origin of the approximating systems in the $0$-limit and in the
$\infty$-limit are globally asymptotically stable equilibria.

To design the state feedback we follow the approach of Praly and Mazenc \cite{Praly-Mazenc}.
To this end, consider the auxiliary system with state
$\XR_i\,=\,(\chi_1,\dots,\chi_i)$ in $\RR^i$, $1\leq i<n$, and dynamics~:
\begin{equation}\label{b33}
\null \qquad
\dot \chi_1 \,=\, \chi_2\ ,\
\dots\ , \
\dot \chi_i \,=\, u\qquad \textrm{ or in compact form }\quad \dot \XR_i \;=\;
\SR_{i}\,  \XR_i\,+\, B_i\,  u\ .
\end{equation}
where $u$ is the input in $\RR$, $\SR_i$ is the shift matrix of order $i$ i.e.
$\SR_{i}\,\XR_{i} = \left(\chi_2,\dots,\chi_i,0\right)^T$, and
$B_i=(0,\dots,1)^T$ is in $\RR^i$.
We show that, if there exists a homogeneous in the bi-limit
stabilizing control law for the origin of the system (\ref{b33}), then there is one for the origin the system with state
$\XR_{i+1}\,=\,(\chi_1,\dots,\chi_{i+1})$ in $\RR^{i+1}$ defined by~:
\begin{equation}
\dot \chi_1 \,=\, \chi_2\ ,\
\dots\ , \
\dot \chi_{i+1} \,=\, u\qquad , \textrm{ i.e. }\qquad \dot \XR_{i+1} \;=\;
\SR_{i+1}\,  \XR_{i+1}\,+\, B_{i+1}\,  u\ .
\end{equation}
Let $\dr_0$ and $\dr_\infty $ be in $(-1, \frac{1}{n-1})$ and consider the weights and degrees defined in (\ref{b60}).

\begin{theorem}[Homogeneous in the bi-limit backstepping]
\label{theo1}
Suppose there exists a homogeneous in the bi-limit function $\phi_i~:\RR^i\rightarrow\RR$ with associated triples $(r_0, \dr_0+r_{0,i}, \phi_{i,0})$ and $(r_\infty,\dr_\infty+r_{\infty,i},\phi_{i,\infty})$ such that the following holds.
\begin{enumerate}
\item There exist $\alpha_{i} \geq  1$ such that the function
$\psi_{i}(\XR_i)\,=\,\phi_i(\XR_i)^{\alpha_i}$ is $C^1$ and for each $j$ in $\{1, \dots, i\}$
the function
$\frac{\partial \psi_i}{\partial \chi_j}$
is homogeneous in the bi-limit,
with weights $(r_{0,1},\dots,r_{0,i})$, $(r_{\infty,1},\dots,r_{\infty,i})$, degrees $\alpha_{i}(r_{0,i}+\dr_0)-r_{0,j}$ and $\alpha_i(r_{\infty,i}+\dr_\infty)-r_{\infty,j}$ and approximating functions $\frac{\partial \psi_{i0}}{\partial \chi_j}$, $\frac{\partial \psi_{i\infty}}{\partial \chi_j}$.

\item The origin   is a globally asymptotically stable equilibrium of the systems  ~:
\\[0.7em]\null \hskip -\leftmargin\refstepcounter{equation}\label{c22}(\theequation)
\hfill $
\dot \XR_{i} =
\SR_{i}\,  \XR_i + B_i\,  \phi_i(\XR_{i})
\  ,\quad
\dot \XR_{i} =
\SR_{i}\,  \XR_i + B_i\,  \phi_{i,0}(\XR_{i})
\  ,\quad
\dot \XR_{i} =
\SR_{i}\,  \XR_i + B_i\,  \phi_{i,\infty}(\XR_{i})\ .
$\hfill \null \\[0.7em]
\end{enumerate}
Then there exits a homogeneous in the bi-limit function
$\phi_{i+1}~:\RR^{i+1}\rightarrow\RR$ with associated triples $(r_0,\dr_0+r_{0,i+1},\phi_{i+1,0})$ and $(r_\infty,\dr_\infty+r_{\infty,i+1},\phi_{i+1,\infty})$ such that the same properties
hold, i.e.
\begin{enumerate}
\item There exists a real number $\alpha_{i+1}>1$ such that the
function $\psi_{i+1}(\XR_{i+1})\,=\,\phi_{i+1}(\XR_{i+1})^{\alpha_{i+1}}$ is $C^1$ and for each $j$ in $\{1, \dots, i+1\}$ the function
$\frac{\partial \psi_{i+1}}{\partial \chi_j}$ is homogeneous in the bi-limit
with weights $(r_{0,1},\dots,r_{0,i+1})$, $(r_{\infty,1},\dots,r_{\infty,i+1})$, degrees $\alpha_{i+1}(r_{0,i+1}+\dr_0)-r_{0,j}$ and $\alpha_{i+1}(r_{\infty,i+1}+\dr_\infty)-r_{\infty,j}$ and approximating functions $\frac{\partial \psi_{i+1,0}}{\partial \chi_j}$, $\frac{\partial \psi_{i+1,\infty}}{\partial \chi_j}$.

\item The origin

is a  globally asymptotically stable equilibrium of the systems

\begin{eqnarray}
\nonumber
\XR_{i+1} &=&
\SR_{i+1}\,  \XR_{i+1}\,+\, B_{i+1}\,  \phi_{i+1}(\XR_{i+1})
\  ,
\\\label{b42}
\XR_{i+1} &=&
\SR_{i+1}\,  \XR_{i+1}\,+\, B_{i+1}\, \phi_{i+1,0}(\XR_{i+1})
\  ,
\\\nonumber
\XR_{i+1} &=&
\SR_{i+1}\,  \XR_{i+1}\,+\, B_{i+1}\, \phi_{i+1,\infty}(\XR_{i+1})\ .
\end{eqnarray}

\end{enumerate}
\end{theorem}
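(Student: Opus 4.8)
The plan is to carry out one step of a Praly--Mazenc-type backstepping, but in the homogeneous-in-the-bi-limit category, each ``large gain'' argument being replaced by an application of the key technical lemma (Lemma~\ref{3}): that is what allows a single gain to serve at once the closed-loop vector field and both of its homogeneous approximations. First I would produce a Lyapunov function for the sub-chain. By hypothesis~2 the three systems in (\ref{c22}) --- i.e.\ $\SR_i\,\XR_i+B_i\,\phi_i(\XR_i)$ together with its homogeneous approximations $\SR_i\,\XR_i+B_i\,\phi_{i,0}(\XR_i)$ and $\SR_i\,\XR_i+B_i\,\phi_{i,\infty}(\XR_i)$ --- are globally asymptotically stable. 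Fixing $d_{V_0}$ and $d_{V_\infty}$ large (the analogues of (\ref{c29})--(\ref{c28}): $d_{V_0}>2\max_{1\le j\le i+1} r_{0,j}+\dr_0$, $d_{V_\infty}>2\max_{1\le j\le i+1} r_{\infty,j}+\dr_\infty$, and $d_{V_\infty}/r_{\infty,i+1}>d_{V_0}/r_{0,i+1}$), Theorem~\ref{b26} gives a $C^1$, proper, positive definite $V_i:\RR^i\to\RR_+$, homogeneous in the bi-limit of degrees $d_{V_0}$, $d_{V_\infty}$, with homogeneous partial derivatives, such that $\frac{\partial V_i}{\partial\XR_i}(\SR_i\,\XR_i+B_i\,\phi_i(\XR_i))$ and the two corresponding expressions for the approximations are negative definite.

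Next I would introduce the crossing variable and the composite Lyapunov function. Since $\psi_i=\phi_i^{\alpha_i}$ is $C^1$ and $s\mapsto s^{\alpha_i}$ is $C^1$ for $\alpha_i\ge1$, the quantity $z(\XR_{i+1})=\chi_{i+1}^{\alpha_i}-\psi_i(\XR_i)$ is $C^1$; by P3 (using $\dr_0+r_{0,i}=r_{0,i+1}$ and $\dr_\infty+r_{\infty,i}=r_{\infty,i+1}$ from (\ref{b60})) it is homogeneous in the bi-limit, of degrees $\alpha_i r_{0,i+1}$ and $\alpha_i r_{\infty,i+1}$, and it vanishes exactly on $\{\chi_{i+1}=\phi_i(\XR_i)\}$. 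Choosing, as in (\ref{c31}), a strictly increasing onto $q_i$-type function $g:\RR\to\RR$ with $g(0)=0$ that is homogeneous in the bi-limit of degrees $\frac{d_{V_0}-r_{0,i+1}}{\alpha_i r_{0,i+1}}$ and $\frac{d_{V_\infty}-r_{\infty,i+1}}{\alpha_i r_{\infty,i+1}}$, I would set
\[
V_{i+1}(\XR_{i+1})\;=\;V_i(\XR_i)\;+\;\int_{\phi_i(\XR_i)}^{\chi_{i+1}}g\bigl(s^{\alpha_i}-\psi_i(\XR_i)\bigr)\,ds\ ,
\]
which is the exact analogue of the cross term $\mathfrak{W}_i$ in the proof of Theorem~\ref{theo2}. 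Because $g(0)=0$, this term is nonnegative, vanishes only on $\{\chi_{i+1}=\phi_i(\XR_i)\}$, and has $\frac{\partial V_{i+1}}{\partial\chi_{i+1}}=g(z)$; rewriting it (as for $\mathfrak{W}_i$) as an integral of a homogeneous function plus products of homogeneous functions, Proposition~\ref{b94} together with P1--P3 and Proposition~\ref{prop1} shows $V_{i+1}$ is $C^1$, positive definite, proper, homogeneous in the bi-limit of degrees $d_{V_0}$, $d_{V_\infty}$, with homogeneous partial derivatives.

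Then I would define the feedback. Let $h:\RR\to\RR$ be another strictly increasing $q_i$-type bi-limit-homogeneous function, of degrees $\mu_0=\frac{\dr_0+r_{0,i+1}}{\alpha_i r_{0,i+1}}$ and $\mu_\infty=\frac{\dr_\infty+r_{\infty,i+1}}{\alpha_i r_{\infty,i+1}}$ (positive since $r_{0,i+1}+\dr_0>0$ and $r_{\infty,i+1}+\dr_\infty>0$ under (\ref{b60})), and put, for a parameter $\ell>0$,
\[
\phi_{i+1}(\XR_{i+1})\;=\;-\,\ell\,h\bigl(\chi_{i+1}^{\alpha_i}-\psi_i(\XR_i)\bigr)\ ;
\]
by Proposition~\ref{prop1} this is homogeneous in the bi-limit with the degrees $\dr_0+r_{0,i+1}$, $\dr_\infty+r_{\infty,i+1}$ required by the statement. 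Differentiating $V_{i+1}$ along $\dot\XR_{i+1}=\SR_{i+1}\,\XR_{i+1}+B_{i+1}\,\phi_{i+1}(\XR_{i+1})$, and noting that the $\XR_i$-subsystem is driven by $\chi_{i+1}$, one gets $\dot V_{i+1}=A(\XR_{i+1})-\ell\,B(\XR_{i+1})$ with $B=g(z)\,h(z)$ and with $A$ equal to $\frac{\partial V_i}{\partial\XR_i}(\SR_i\,\XR_i+B_i\,\phi_i(\XR_i))$, plus the cross term $\frac{\partial V_i}{\partial\chi_i}(\chi_{i+1}-\phi_i(\XR_i))$, plus a remainder proportional to $\int_{\phi_i(\XR_i)}^{\chi_{i+1}}g'(s^{\alpha_i}-\psi_i(\XR_i))\,ds$. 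By P1--P3 both $A$ and $B$ are homogeneous in the bi-limit of degrees $d_{V_0}+\dr_0$, $d_{V_\infty}+\dr_\infty$; $B$ and its approximations are nonnegative and vanish only where $z=0$. On $\{B=0\}\cap(\RR^{i+1}\setminus\{0\})$ one has $\chi_{i+1}=\phi_i(\XR_i)$, hence $\XR_i\neq0$, and both the cross term and the remainder vanish (the latter because the interval $[\phi_i(\XR_i),\chi_{i+1}]$ collapses), so $A=\frac{\partial V_i}{\partial\XR_i}(\SR_i\,\XR_i+B_i\,\phi_i(\XR_i))<0$; the same computation with every object replaced by its approximating function gives $\{B_0=0\}\subseteq\{A_0<0\}$ and $\{B_\infty=0\}\subseteq\{A_\infty<0\}$ off the origin. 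Lemma~\ref{3}, with $\eta=A$ and $\gamma=B$, then yields $\ell^\star$ such that for all $\ell\ge\ell^\star$ the three functions $\dot V_{i+1}$, $\dot V_{i+1,0}$, $\dot V_{i+1,\infty}$ are negative definite; fixing such an $\ell$ and invoking the Lyapunov theorem (for $V_{i+1}$, resp.\ for the homogeneous positive definite $V_{i+1,0}$, $V_{i+1,\infty}$) gives the global asymptotic stability asserted in (\ref{b42}). It remains only to pick $\alpha_{i+1}>1$ large enough --- larger than $1/\mu_0$, $1/\mu_\infty$, $\max_{1\le j\le i+1} r_{0,j}/(r_{0,i+1}+\dr_0)$ and $\max_{1\le j\le i+1} r_{\infty,j}/(r_{\infty,i+1}+\dr_\infty)$, enlarging $d_{V_0}$, $d_{V_\infty}$ beforehand if needed --- so that $\psi_{i+1}=\phi_{i+1}^{\alpha_{i+1}}$, which equals $h(z)^{\alpha_{i+1}}$ up to a sign and the constant $\ell^{\alpha_{i+1}}$ with $z$ of class $C^1$, is $C^1$ with $\frac{\partial\psi_{i+1}}{\partial\chi_j}$ homogeneous in the bi-limit of degrees $\alpha_{i+1}(r_{0,i+1}+\dr_0)-r_{0,j}$ and $\alpha_{i+1}(r_{\infty,i+1}+\dr_\infty)-r_{\infty,j}$.

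\textbf{Main difficulty.} The analytic core is the set inclusion above: that every zero of $B$ (and of each of $B_0$, $B_\infty$) is already a point where $A$ (resp.\ $A_0$, $A_\infty$) is strictly negative, so that Lemma~\ref{3} applies uniformly to the vector field and both approximations. This works only because $z$ and $g$ were arranged so that every cross term vanishes exactly on $\{\chi_{i+1}=\phi_i(\XR_i)\}$, where the sub-chain's Lyapunov function already decays strictly. The remaining issue is bookkeeping, but genuinely delicate: because differentiation does not preserve homogeneity in the bi-limit (see the example after Proposition~\ref{b32}), the $C^1$ regularity and homogeneity of $V_{i+1}$, of the $\frac{\partial V_{i+1}}{\partial\chi_j}$ and of the $\frac{\partial\psi_{i+1}}{\partial\chi_j}$ cannot be read off term by term; they must be obtained structurally, by writing the new objects as $q_i$-type homogeneous functions and integrals of homogeneous functions composed with the $C^1$ homogeneous quantity $z$ and appealing to Propositions~\ref{prop1} and~\ref{b94} --- which is exactly where the factor $2$ in the conditions on $d_{V_0}$, $d_{V_\infty}$ and the size of $\alpha_{i+1}$ are used.
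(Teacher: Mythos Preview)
Your proposal is correct and follows the same three-step strategy as the paper: obtain $V_i$ from Theorem~\ref{b26}, augment it by a backstepping cross-term to form $V_{i+1}$, define the feedback with a free gain, and invoke Lemma~\ref{3} on the decomposition $\dot V_{i+1}=A-\ell B$ (the paper writes this as $T_1-kT_2$), the set inclusion $\{B=0\}\subseteq\{A<0\}$ holding precisely because on $\{\chi_{i+1}=\phi_i(\XR_i)\}$ the derivative collapses to that of the sub-chain.

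The constructions differ in detail. For the cross-term the paper uses explicit powers,
\[
\int_{\phi_i}^{\chi_{i+1}}\Bigl(h^{\frac{d_{V_0}-r_{0,i+1}}{r_{0,i+1}}}-\phi_i^{\frac{d_{V_0}-r_{0,i+1}}{r_{0,i+1}}}\Bigr)dh
\;+\;\text{(analogous $\infty$-term)},
\]
and it defines $\psi_{i+1}$ \emph{directly} as an integral $\psi_{i+1}=-k\int_0^{z}\mathfrak{H}(|s|^{p_0},|s|^{p_\infty})\,ds$ with $z=\chi_{i+1}^{\alpha_i}-\psi_i$, so that $\partial\psi_{i+1}/\partial z$ is manifestly continuous and homogeneous in the bi-limit (Example~\ref{c23}); then $\phi_{i+1}=\psi_{i+1}^{1/\alpha_{i+1}}$. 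This buys a clean verification of Property~1 via Proposition~\ref{b94} and P1, avoiding the need to differentiate a power of a $q_i$-type function. Your route---define $\phi_{i+1}=-\ell h(z)$ first and then raise to the power $\alpha_{i+1}$---works too, but the bi-limit homogeneity of $|h|^{\alpha_{i+1}-1}h'$ must be checked by hand from the explicit piecewise form of $h$. Finally, your condition on $d_{V_0}$ (copied from the observer proof) is not quite the right one here: the paper needs $d_{V_0}/r_{0,i+1}>1+\alpha_i$ and $d_{V_\infty}/r_{\infty,i+1}\ge d_{V_0}/r_{0,i+1}$, which is exactly what makes $\phi_i^{(d_{V_0}-r_{0,i+1})/r_{0,i+1}}=(\psi_i)^{(d_{V_0}-r_{0,i+1})/(\alpha_i r_{0,i+1})}$ a $C^1$ function of the $C^1$ quantity $\psi_i$, and hence $V_{i+1}$ itself $C^1$.
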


\begin{proof}
We prove this result in three steps.
First we construct a homogeneous in the bi-limit Lyapunov function, then we define a control law parametrized by a real number $k$.
Finally we show that there exists $k$ such that the time derivative,  along the trajectories of the systems (\ref{b42}), of the Lyapunov function and of its approximating functions are negative definite.

\vspace{1em}\noindent
\textbf{1. Construction of the Lyapunov function.}
Let $d_{V_0}$ and $d_{V_\infty}$ be positive real numbers satisfying~:
%
\begin{equation}\label{b65}
d_{V_0} > \max_{j\in\{1,\ldots,n\}}\{r_{0,j}\}\quad, \qquad d_{V_\infty} > \max_{j\in\{1,\ldots,n\}}\{r_{\infty,j}\}\ ,
\end{equation}
and
\begin{equation}\label{c33}
\frac{d_{V_\infty}}{r_{\infty,i+1}} \;\geq \;\frac{d_{V_0}}{r_{0,i+1}} > 1+\alpha_i\ .
\end{equation}
With this selection,
Theorem \ref{b26} gives the existence of a
 $C^1$, proper and positive definite function
$V_i:\RR^i\rightarrow\RR_+$ such that, for each $j$ in  $\{1, \dots, n\}$, the function
$\frac{\partial V_i}{\partial \chi_j}$ is homogeneous in the bi-limit with
weights $(r_{0,1},\dots,r_{0,i})$,
$(r_{\infty,1},\dots,r_{\infty,i})$, degrees $d_{V_0}-r_{0,j}$, $d_{V_\infty}-r_{\infty,j}$, and approximating functions $\frac{\partial V_{i,0}}{\partial \chi_j}$ and
$\frac{\partial V_{i,\infty}}{\partial \chi_j}$.
Moreover, we have for all $\XR_i \in\RR^{i}\setminus\{0\}$~:
\begin{eqnarray}
\nonumber
\frac{\partial V_{i}}{\partial \XR_{i}}(\XR_{i})\,
\left[\SR_{i}\,  \XR_i +  B_i\,\phi_i(\XR_{i}) \right]&<& 0
\  ,
\\
\label{b35}
\frac{\partial V_{i,0}}{\partial \XR_{i}}(\XR_{i})\,
\left[
\SR_{i}\,  \XR_i + B_i\,\phi_{i,0}(\XR_{i})
\right]&<& 0
\  ,
\\\nonumber
\displaystyle\frac{\partial V_{i,\infty}}{\partial \XR_{i}}(\XR_{i})\,
\left[\SR_{i}\,  \XR_i + B_i\,\phi_{i,\infty}(\XR_{i})\right]&<& 0
\  .
\end{eqnarray}
Following \cite{Praly-AndreaNovel-Coron},
consider the Lyapunov function $V_{i+1}~:\RR^{i+1}\rightarrow\RR_+$ defined by~:
\\[1em]$\displaystyle
V_{i+1}(\XR_{i+1})\;=\; V_i(\XR_i) \;+\; \int_{\phi_i(\XR_i)}^{\chi_{i+1}} \left(h^\frac{d_{V_0}-r_{0,i+1}}{r_{0,i+1}}\,-\, \phi_i(\XR_{i})^\frac{d_{V_0}-r_{0,i+1}}{r_{0,i+1}}\right)\;dh
$\hfill\null\\
\null\hfill$\displaystyle
\,+\,\int_{\phi_i(\XR_i)}^{\chi_{i+1}} \left(h^\frac{d_{V_\infty}-r_{\infty,i+1}}{r_{\infty,i+1}}\,-\, \phi_i(\XR_{i})^\frac{d_{V_\infty}-r_{\infty,i+1}}{r_{\infty,i+1}}\right)
 \;dh
\  .
$\\[1em]
This function is positive definite and proper.
Furthermore, as $d_{V_\infty}$ and $d_{V_0}$ satisfy (\ref{c33}) we
have~:
$$\frac{d_{V_\infty}-r_{\infty,i+1}}{r_{\infty,i+1}}\geq\frac{d_{V_0}-r_{0,i+1}}{r_{0,i+1}} > \alpha_i \geq 1
\  .
$$
The function $\psi_i(\XR_i)=\phi_i(\XR_i)^{\alpha_i}$ being $C^1$, this inequality yields that the
function $V_{i+1}$ is $C^1$ .
Finally, for each $j$ in $\{1,\dots,n\}$, the function
$\frac{\partial V_{i+1}}{\partial \chi_j}$ is homogeneous in the bi-limit
with associated triples
\\
$\left((r_{0,1},\dots,r_{0,i+1}),
d_{V_0}-r_{0,j},
\frac{\partial V_{i+1,0}}{\partial \chi_j} \right)$,
$\left((r_{\infty,1},\dots,r_{\infty,i+1}),
d_{V_\infty}-r_{\infty,j},
\frac{\partial V_{i+1,\infty}}{\partial \chi_j} \right)$

\par\vspace{1em}\noindent
\textbf{2. Definition of the control law~:}
Recall (\ref{c41}) and
consider the function $\psi_{i+1}~:\RR^{i+1}\rightarrow\RR$ defined by~:
$$
\psi_{i+1}(\XR_{i+1})=
-k\int_0^{\chi_{i+1}^{\alpha_i} - \phi_i(\XR_i)^{\alpha_i}}
\mathfrak{H}
\left(
|s|^{\alpha_{i+1}\frac{\dr_0+r_{0,i+1}}{\alpha_{i}\,r_{0,i+1}}-1}
,
|s|^{\alpha_{i+1}\frac{\dr_\infty+r_{\infty,i+1}}{\alpha_{i}\,r_{\infty,i+1}}-1}
\right)\;ds\ ,
$$
where $k$, in $\RR_+$, is a design parameter and $\alpha_{i+1}$ is selected as
$$
\alpha_{i+1}\;\geq\;\max\left\{ \frac{\alpha_{i}\,r_{0,i+1}}{\dr_0+r_{0,i+1}}, \frac{\alpha_{i}\,r_{\infty,i+1}}{\dr_\infty+r_{\infty,i+1}},1\right\}\ .
$$
$\psi_{i+1}$ takes values with  the same sign as
$\chi_{i+1}-\phi_i(\XR_{i})$, it
is
$C^1$ and, by Proposition \ref{b94}, it is homogeneous in the bi-limit.
Furthermore, by Proposition \ref{prop1}, for each $j$ in $\{1, \dots, i+1\}$, the function
$\frac{\partial \psi_{i+1}}{\partial \chi_j}$ is homogeneous in the bi-limit,
with weights $(r_{0,1},\dots,r_{0,i+1})$, $(r_{\infty,1},\dots,r_{\infty,i+1})$, degrees $\alpha_{i+1}(r_{0,i+1}+\dr_0)-r_{0,j}$ and $\alpha_{i+1}(r_{\infty,i+1}+\dr_\infty)-r_{\infty,j}$ and approximating functions $\frac{\partial \psi_{i+1,0}}{\partial \chi_j}$, $\frac{\partial \psi_{i+1,\infty}}{\partial \chi_j}$.
With this at hand, we choose the control law $\phi_{i+1}$ as~:
%
$$
\phi_{i+1}(\XR_{i+1})\;=\;\psi_{i+1}(\XR_{i+1})^\frac{1}{\alpha_{i+1}}
$$

\par\vspace{1em}\noindent
\textbf{3. Selection of $k$.}
Note that~:
\begin{equation}
\frac{\partial V_{i+1}}{\partial \XR_{i+1}}(\XR_{i+1})\,
\left[\SR_{i+1}\,  \XR_{i+1} + B_{i+1}\,  \phi_{i+1}(\XR_{i+1})\right]
 \;=\; T_1(\XR_{i+1}) - k\,T_2(\XR_{i+1})\ ,
\end{equation}
with the functions $T_1$ and $T_2$ defined as~:
\\[0.5em]
$\displaystyle
T_1(\XR_{i+1})\,=\,\frac{\partial V_{i+1}}{\partial \XR_i}(\XR_{i+1})\,
\left[\SR_{i}\,  \XR_{i} + B_{i} \chi_{i+1})\right]
$\hfill\null
\\[0.5em]
$\displaystyle
T_2(\XR_{i+1})\,=\,\left(\chi_{i+1}^\frac{d_{V_0}-r_{0,i+1}}{r_{0,i+1}}\,-\, \phi_i(\XR_{i})^\frac{d_{V_0}-r_{0,i+1}}{r_{0,i+1}}\right.$\hfill\null\\
\null\hfill$\displaystyle\left.\;+\;
\chi_{i+1}^\frac{d_{V_\infty}-r_{\infty,i+1}}{r_{\infty,i+1}}\,-\, \phi_i(\XR_{i})^\frac{d_{V_\infty}-r_{\infty,i+1}}{r_{\infty,i+1}}\right)
\phi_{i+1}(\XR_{i+1})\ .
$
\\[0.5em]
By definition of homogeneity in the bi-limit and Proposition \ref{prop1},
these functions are homogeneous in the bi-limit with weights $(r_{0,1},\dots,r_{0,i+1})$ and $(r_{\infty,1},\dots,r_{\infty,i+1})$, and degrees $d_{V_0}+\dr_0$ and $d_{V_\infty}+\dr_\infty$.
Moreover, since $\phi_{i+1}(\XR_{i+1})$ has the same sign as $\chi_{i+1}-\phi_i(\XR_{i})$,
$T_2(\XR_{i+1}) $ is non-negative for all $\XR_{i+1}$ in $\RR^{i+1}$
and as $\phi_{i+1}(\XR_{i+1})=0$ only if $\chi_{i+1}-\phi_i(\XR_{i})=0$ we get~:
\begin{eqnarray*}
T_2(\XR_{i+1}) = 0 & \quad  \Longrightarrow \quad   &\chi_{i+1}\,=\, \phi_i(\XR_i)\  ,
\\
\chi_{i+1}\,=\, \phi_i(\XR_i)  & \quad  \Longrightarrow \quad  &
T_1(\XR_{i+1})\,=\,\frac{\partial V_i}{\partial \XR_i}(\XR_i)
\left[\SR_{i}\,  \XR_{i} + B_i\phi_i(\XR_i) \right]
\  .
\end{eqnarray*}
Consequently, equations (\ref{b35}) yield~:
$$
\left\{\XR_{i+1}\in \RR^{i+1}\setminus\{0\}\: :\;  T_2(\XR_{i+1}) =0 \right\}
\quad \subseteq\quad
\left\{\XR_{i+1}\in \RR^{i+1}\: :\;  T_1(\XR_{i+1}) < 0 \right\}\  .
$$
The same implication holds for the homogeneous approximations of the two functions at infinity and around the origin, i.e.
\begin{eqnarray*}
\left\{\XR_{i+1}\in \RR^{i+1}\setminus\{0\}\: :\;  T_{2,0}(\XR_{i+1}) =0 \right\}
\quad \subseteq\quad
\left\{\XR_{i+1}\in \RR^{i+1}\: :\;  T_{1,0}(\XR_{i+1}) < 0 \right\}
\ ,\\
\left\{\XR_{i+1}\in \RR^{i+1}\setminus\{0\}\: :\;  T_{2,\infty }(\XR_{i+1}) =0 \right\}
\quad \subseteq\quad
\left\{\XR_{i+1}\in \RR^{i+1}\: :\;  T_{1,\infty }(\XR_{i+1}) < 0 \right\}
\ .
\end{eqnarray*}
Hence, by Lemma \ref{3}, there exists
$k^*>0$ such that, for all $k\geq k^*$, we have for all $\XR_{i+1}\,\neq\,0$~:
\begin{eqnarray*}
\frac{\partial V_{i+1}}{\partial \XR_{i+1}}(\XR_{i+1})\,
\left[\SR_{i+1}\,  \XR_{i+1} + B_{i+1} \phi_{i+1}(\XR_{i+1})\right] &<& 0\ ,\\
\frac{\partial V_{i+1,0}}{\partial \XR_{i+1}}(\XR_{i+1})\,
\left[\SR_{i+1}\,  \XR_{i+1} + B_{i+1} \phi_{i+1,0}(\XR_{i+1})\right] &<& 0
\  ,
\\
\frac{\partial V_{i+1,\infty}}{\partial \XR_{i+1}}(\XR_{i+1})
\left[\SR_{i+1}\,  \XR_{i+1} + B_{i+1}
\phi_{i+1,\infty}(\XR_{i+1})\right] &<& 0\ .
\end{eqnarray*}
This implies that   the origin is a globally
asymptotically stable equilibrium of the systems (\ref{b42}).
\end{proof}

To construct the function $\phi_n$ it is sufficient to iterate the
construction in Theorem \ref{theo1} starting from
$$
\phi_1(\chi_1)\;=\; \psi_1(\chi_1)^\frac{1}{\alpha_1}\quad,\qquad\psi_{1}(\chi_1)=
-k_1\int_0^{\chi_{1}}
\mathfrak{H}
\left(
|s|^{\alpha_{1}\frac{r_{0,2}}{r_{0,1}}-1}
,
|s|^{\alpha_{1}\frac{r_{\infty,2}}{r_{\infty,1}}-1}
\right)\;ds\ ,
$$
with $k_1>0$.

At the end of the recursive procedure, we have that the
origin is a globally asymptotically stable equilibrium of the systems~:
\begin{eqnarray}
\nonumber
\XR_n &=& \SR_n\,  \XR_n\,+\, B_n\,  \phi_n(\XR_n)
\  ,
\\\label{b38}
\XR_n &=& \SR_n\,  \XR_n\,+\, B_n\, \phi_{n,0}(\XR_{n})
\  ,
\\\nonumber
 \XR_{n} &=& \SR_n\,  \XR_n\,+\, B_n\, \phi_{n,\infty}(\XR_{n})\ .
\end{eqnarray}

\remark \label{c35}
 Note that if $\dr_0\geq 0$ and $\dr_\infty \geq 0$, then we can select $\alpha_i=1$ for all $1\leq i \leq n$ and if $\dr_0\leq 0$ and $\dr_\infty \geq \dr_0$ we can select $\alpha_i=\frac{r_{0,1}}{r_{0,i+1}}$. Finally if $\dr_\infty \leq 0$ and $\dr_0 \geq \dr_\infty$ we can select $\alpha_i=\frac{r_{\infty,1}}{r_{\infty,i+1}}$.

\remark \label{c34}
As in the observer design, when $\dr_0  \leq \dr_\infty$, we have $\frac{r_{0,i+1}+\dr_{0}}{r_{0,i+1}} \leq \frac{r_{\infty,i+1}+\dr_{\infty}}{r_{\infty,i+1}}$ for $i=1\dots,n$ and we can replace the function $\psi_i$ by the simpler function~:\\[0.5em]
\refstepcounter{equation}$(\theequation)$\label{c36}
\quad$\psi_{i+1}(\XR_{i+1})\;=\;-k\left(
|\chi_{i+1}^{\alpha_i} - \phi_i(\XR_i)^{\alpha_i}|^{\alpha_{i+1}\frac{\dr_0+r_{0,i+1}}{\alpha_{i}\,r_{0,i+1}}}
\right.$
\\[-0.5em]
\null\hfill$\left.+
|\chi_{i+1}^{\alpha_i} - \phi_i(\XR_i)^{\alpha_i}|^{\alpha_{i+1}\frac{\dr_\infty+r_{\infty,i+1}}{\alpha_{i}\,r_{\infty,i+1}}}
\right)\ .$
\\[0.5em]
Finally if $0\leq \dr_0  \leq \dr_\infty$, taking $\alpha_i=1$ (see Remark \ref{c35}) and $\phi(\XR_{i+1})\,=\,\psi_{i+1}(\XR_{i+1})$ as defined in (\ref{c36}), we recover the design in \cite{Andrieu-Praly-Astolfi-06}.

\example
Consider a chain of integrators of dimension two with weights and degrees~:
$$
\left(r_0, \, \dr_0\right) \;=\;  \Big((2-q,1),\,q-1\Big)
\quad ,\qquad
\left( r_\infty,\, \dr_\infty\right) \;=\; \Big((2-p,1),\, p-1 \Big)\ ,
$$ with $2>p> q> 0$.
Given $k_1>0$, using the proposed backstepping procedure we obtain a positive real number $k_2$ such that the feedback~:
\begin{equation}\label{c49}
\phi_{2}(\chi_1,\chi_2)=
-k_2\int_0^{\chi_{1} - \phi_i(\chi_1)}
\mathfrak{H}
\left(
|s|^{q-1}
,
|s|^{p-1}
\right)\;ds\ ,
\end{equation}
with $\phi_{1}(\chi_1)=
-k_1\int_0^{\chi_{1}}
\mathfrak{H}
\left(
|s|^{\frac{q-1}{2-q}}
,
|s|^{\frac{p-1}{2-p}}
\right)\;ds$
renders the origin   a globally asymptotically stable equilibrium of the closed loop system.
Furthermore, as a consequence of the robustness result in Corollary \ref{b36}, there is a positive real number
$c_G$ such that, if the positive real numbers $|c_0|$ and $|c_\infty|$ associated with
$\delta_i$ in (\ref{b46}) are smaller than $c_G$, then the control law $\phi_2$ globally
asymptotically stabilizes the origin of system (\ref{b18}).

\section{Application to nonlinear output feedback design}
\label{b70}

\subsection{Results on output feedback}

The tools presented in the previous sections
can be used to derive two new results on stabilization by output feedback for the origin of nonlinear systems.
The output feedback is designed for
a simple chain of integrators~:
\begin{equation}\label{c55}
\dot x \;=\; \SR_n\, x\;+\; B_n\,u \quad, \qquad y = x_1\ ,
\end{equation}
where $x$ is in $\RR^n$,  $y$ is the output in $\RR$, $u$ is the control input in $\RR$.
It is then shown to be adequate to solve the output feedback
stabilization problem for the origin of systems for which this chain of integrators
can be considered as the dominant part of the dynamics.
Such a domination approach has a long history.

It is the corner stone of the results in \cite{Khalil-Saberi} (see also \cite{Qian-Lin}) where a linear controller was introduced to deal with a nonlinear systems.
This approach has also been followed with nonlinear controller   in \cite{Praly-Jiang}
and more recently in combination with weighted homogeneity in \cite{Yang-Lin, Qian, Qian-Lin-2006} and references therein.

In the context of homogeneity in the bi-limit, we use it
exploiting the proposed backstepping and recursive
observer designs.
Following the idea introduced by Qian in \cite{Qian} (see also \cite{Qian-Lin}), the output feedback we proposed
is given by~:
\begin{eqnarray}
\label{b84}
\dot {\hat \XR}_n &=& L\,\left(
\SR_{n}\, \hat  \XR_{n} + B_{n} \phi_n(\hat \XR_n) \,+\, K_1(x_1-\hat \chi_1)\right)
\  ,\quad
u\;=\;L^n\,\phi_n(\hat \XR_n)\ ,
\end{eqnarray}
with $\hat \XR_n$ in $\RR^n$ and where $\phi_n$ and $K_1$ are continuous functions  and $L$ is a positive real number.
Employing the recursive procedure given in Sections \ref{b69} and \ref{b99},
we get the following theorem
whose proof is in section \ref{LP17}.
\begin{theorem}\label{c54}
For every real numbers $\dr_0$ and $\dr_\infty$,
in $\left(-1,\frac{1}{n-1}\right)$,
 there exist a homogeneous in the bi-limit function $\phi_n~:\RR^n\rightarrow\RR$ with associated triples $(r_0, 1 + \dr_0, \phi_{n,0})$ and
 $(r_\infty, 1 + \dr_\infty, \phi_{n,\infty})$
 and a homogeneous in the bi-limit vector field
$K_1~:\RR^n\rightarrow\RR^n$ with associated triples $(r_0, \dr_0, K_{1,0})$ and
 $(r_\infty,  \dr_\infty, K_{1,\infty})$ such that for all real number $L>0$ the origin

 is a globally asymptotically stable equilibrium of the system (\ref{c55}) and (\ref{b84}) and of its homogeneous approximations.

\end{theorem}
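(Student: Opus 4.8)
The plan is to assemble the state feedback of Section \ref{b99} and the observer of Section \ref{b69}, and then to analyse the resulting interconnection as a cascade using the ISS and small-gain machinery of Section \ref{b68}. First I would fix $\dr_0,\dr_\infty$ in $\left(-1,\frac1{n-1}\right)$, take the weights $r_0,r_\infty$ from (\ref{b60}), and run the backstepping recursion of Theorem \ref{theo1} starting from $\phi_1$ to produce a homogeneous in the bi-limit function $\phi_n$ with triples $(r_0,1+\dr_0,\phi_{n,0})$ and $(r_\infty,1+\dr_\infty,\phi_{n,\infty})$ such that the origin of $\dot\XR_n=\SR_n\XR_n+B_n\phi_n(\XR_n)$ and of its two homogeneous approximations is globally asymptotically stable. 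Independently, I would run the observer recursion of Theorem \ref{theo2} starting from $K_n$ to produce a homogeneous in the bi-limit vector field $K_1$ with triples $(r_0,\dr_0,K_{1,0})$ and $(r_\infty,\dr_\infty,K_{1,\infty})$ rendering the origin of $\dot E_1=\SR_nE_1+K_1(e_1)$ and of its two homogeneous approximations globally asymptotically stable. These are exactly the two objects named in the statement.

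Second, I would introduce the scaled state $\chi_i^\ast=x_i/L^{i-1}$, i.e.\ $\XR_n^\ast$, and the error $E=\XR_n^\ast-\hat\XR_n$ with first component $e_1=x_1-\hat\chi_1$. A direct computation using $u=L^n\phi_n(\hat\XR_n)$ and $x_1=\chi_1^\ast$ turns (\ref{c55})--(\ref{b84}) into the cascade $\dot E=L\,(\SR_nE-K_1(e_1))$, $\dot{\hat\XR}_n=L\,(\SR_n\hat\XR_n+B_n\phi_n(\hat\XR_n)+K_1(e_1))$, in which the $E$-subsystem is autonomous and $e_1$ is one of its components. (The sign in front of $K_1$ in the $E$-equation is the one dictated by (\ref{b84}); it is harmless because the construction of Theorem \ref{theo2} applies verbatim with the reflected argument, so the origin of $\dot E=L(\SR_nE-K_1(e_1))$ and of its homogeneous approximations is still globally asymptotically stable.) Assigning the weight $r_0$ (resp.\ $r_\infty$) to both $\hat\XR_n$ and $E$ and the weight $r_{0,1}$ (resp.\ $r_{\infty,1}$) to the exogenous signal $e_1$, one checks with properties P1--P3 and Proposition \ref{prop1} that the closed-loop vector field in $(\hat\XR_n,E)$ is homogeneous in the bi-limit, with degrees $\dr_0,\dr_\infty$, and that its homogeneous approximations are precisely the same cascade written with $(\phi_{n,0},K_{1,0})$ and $(\phi_{n,\infty},K_{1,\infty})$; the positive factor $L$ only rescales time and affects neither homogeneity nor stability. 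Thus it suffices to prove global asymptotic stability of the origin of this cascade and of its two approximations.

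Third, I would close the argument with Corollary \ref{b36}. For the driving subsystem: since $\dot E=L(\SR_nE-K_1(e_1))$ has a globally asymptotically stable origin it admits a class $\mathcal{KL}$ estimate $|E(t)|\le\beta_E(|E(s)|,t-s)$; taking $z:=E$, $\delta:=e_1$, $\gamma_z\equiv0$ and $\beta_\delta:=\beta_E$, the bounds (\ref{LP11}) and (\ref{b96}) hold for every solution (the $c_G$-term in (\ref{b96}) is not even needed). For the driven subsystem: $\dot{\hat\XR}_n=L(\SR_n\hat\XR_n+B_n\phi_n(\hat\XR_n)+K_1(e_1))$ is homogeneous in the bi-limit with $e_1$ as a disturbance of weights $r_{0,1},r_{\infty,1}$, and for $e_1=0$ its origin and the origins of its two homogeneous approximations are globally asymptotically stable by Theorem \ref{theo1}; hence the hypotheses of Corollary \ref{b51}, and therefore of Corollary \ref{b36}, are met. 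Corollary \ref{b36} then yields a class $\mathcal{KL}$ function $\beta_x$ with $|\hat\XR_n(t)|\le\beta_x(|(\hat\XR_n(s),E(s))|,t-s)$, which together with the estimate on $E$ gives a class $\mathcal{KL}$ estimate for the whole state $(\hat\XR_n,E)$, hence global asymptotic stability of its origin; since $x$ is a fixed invertible (dilation) function of $(\hat\XR_n,E)$, the same holds for the origin of (\ref{c55})--(\ref{b84}). Running the identical cascade argument on the two homogeneous approximations --- each of which is a standard homogeneous vector field, hence homogeneous in the bi-limit with both approximations equal to itself, so that Corollaries \ref{b51} and \ref{b36} still apply --- completes the proof.

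As for where the effort concentrates: the construction of $\phi_n$ and $K_1$ is merely iteration of the two recursive theorems, and the cascade/small-gain step is routine once it is set up. The real bookkeeping --- and the main obstacle --- is the second step: verifying that the change of coordinates $\chi_i^\ast=x_i/L^{i-1}$ together with $L>0$ leaves the bi-limit structure intact with the correct weights and degrees on the disturbance channel, and, crucially, that the homogeneous approximation of the closed loop equals the closed loop built from the approximating data $(\phi_{n,0},K_{1,0})$ and $(\phi_{n,\infty},K_{1,\infty})$ (so that Theorems \ref{theo1}--\ref{theo2} can be invoked for those as well). This is where properties P1--P3, Proposition \ref{prop1} and the degree matching $r_{0,i+1}=r_{0,i}+\dr_0$ must be used with care.
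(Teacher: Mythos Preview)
Your proposal is correct and reaches the same conclusion, but the closing argument is organised differently from the paper's. After the same change of coordinates (the paper sets $e_i=\hat\chi_i-x_i/L^{i-1}$ and rescales time by $L$, obtaining exactly your autonomous-$E$/driven-$\hat\XR_n$ cascade), the paper does \emph{not} appeal to Corollary~\ref{b36}. Instead it applies Theorem~\ref{b26} twice, producing homogeneous-in-the-bi-limit Lyapunov functions $V$ for the $\hat\XR_n$-subsystem and $W$ for the $E$-subsystem, forms the single candidate $U(\hat\XR_n,E)=V(\hat\XR_n)+\coef\,W(E)$, and invokes Lemma~\ref{3} directly on the pair
\[
\eta=\frac{\partial V}{\partial\hat\XR_n}\big(\SR_n\hat\XR_n+B_n\phi_n(\hat\XR_n)+K_1(e_1)\big),\qquad
\gamma=-\frac{\partial W}{\partial E}\big(\SR_nE+K_1(e_1)\big),
\]
to find $\coef^*$ making $\dot U<0$ for the closed loop \emph{and simultaneously} for both homogeneous approximations. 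This direct Lyapunov route is shorter and disposes of all three systems in one stroke; your small-gain route is more modular (and reuses Corollary~\ref{b36}, which the paper reserves for Corollaries~\ref{the_th}--\ref{c17}) but obliges you to repeat the cascade argument on each approximation. Two minor touch-ups: $\gamma_z\equiv0$ is not class~$\mathcal K$, so take any class~$\mathcal K$ function instead (the bound~(\ref{LP11}) only loosens); and the sign issue you flag in the $E$-equation is indeed present in the paper as well and is harmless for the reason you give.
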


We can then apply Corollary \ref{b36} to get an output feedback result for nonlinear systems described by~:
\begin{equation}
\label{LP2}
\dot x \;=\; \SR_n\, x\;+\; B_n\,u\;+\;\delta(t)\quad,\qquad y\;=\; x_1\ ,
\end{equation}
where $\delta~:\RR_+\rightarrow\RR^n$ is a continuous function related to the solutions as
described in the two Corollaries below
and proved in section \ref{LP17}.
Depending on wether  $\dr_0\leq\dr_\infty$ or $\dr_\infty\leq \dr_0$ we get
an output feedback result for systems in Feedback or Feedforward form.
\begin{corollary}[Feedback-form]\label{the_th}
If, in the design of $\phi_n$ and $K_1$, we select
$\dr_0\leq\dr_\infty$, then
for every positive real numbers $c_0$
and $c_\infty$ there exist a real number $L^*>0$ such that for every
$L$ in $[L^*,+\infty )$, the following holds~:\\
For every class $\mathcal{K}$ function $\gamma _z$ and
class $\KR\LR$ function
$\beta_\delta$ we can find two class $\KR\LR$ functions $\beta_x$ and $\beta_{\hat x}$,
such that, for each function
$t\in [0,T)\mapsto (x(t), \hat \XR_n(t), \delta(t),z(t))$, $T\leq +\infty $,
with $(x,\hat \XR_n)$ $C^1$ and $\delta$ and $z$ continuous, which satisfies (\ref{LP2}), (\ref{b84}),
and for $i$ in $\{1, \dots, n\}$ and $0\leq s\leq t<T$,
\\[0.7em]\null \hfill $
|z(t)|\; \leq \;
\max\left\{
\beta _\delta \Big(|z(s)|, t-s\Big)\,  ,\,
\sup_{s\leq \kappa \leq t}
\gamma _z(|x(\kappa)|)\right\}\  ,
$\hfill \null \\
$\displaystyle
|\delta _i(t)|\,\leq\, \max\left\{\vphantom{\sum_{j}^i}
\beta _\delta \Big(|z(s)|, t-s\Big),\right.
$\hfill\null\\
\refstepcounter{equation}\label{36}$(\theequation)$\null\hfill$\displaystyle \left.
\sup_{s\leq \kappa\leq t}\left\{
c_0\,  \sum_{j=1}^i |x_j(\kappa)|^\frac{1-\dr_0(n-i-1)}{1-\dr_0(n-j)}
\;+\;
c_\infty\,\sum_{j=1}^i |x_j(\kappa)|^\frac{1-\dr_\infty(n-i-1)}{1-\dr_\infty(n-j)}\right\}
\right\}\  ,
$ \\[1em]
we have for all $0\leq s \leq t \leq T$~:
$$
|x(t)|\leq \beta_x(|(x(s), \hat \XR_n(s), z(s))|, t-s)\quad, \qquad
|\hat \XR_n(t)|\leq \beta_{\hat x}(|(x(s), \hat \XR_n(s), z(s))|, t-s) \ .
$$
\end{corollary}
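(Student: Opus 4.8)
The plan is to transform the perturbed interconnection (\ref{LP2})--(\ref{b84}) into a fixed, $L$-independent system homogeneous in the bi-limit subject to a disturbance, and then to invoke the small-gain Corollary \ref{b36}. First, apply Theorem \ref{c54}: it provides $\phi_n$ and $K_1$ such that the origin of (\ref{c55})--(\ref{b84}) and of its two homogeneous approximations is a globally asymptotically stable (GAS) equilibrium for every $L>0$. Introduce the dilation-compatible change of variables $\bar x_i=L^{1-i}x_i$, $\bar e_i=\bar x_i-\hat\chi_i$, together with the time rescaling $\tau=Lt$. A direct computation shows that, for $\delta\equiv0$, this turns the closed loop into a system $\frac{d}{d\tau}(\bar x,\bar e)=F(\bar x,\bar e)$ that does not depend on $L$ — the observer subsystem for $\bar e$ is autonomous and coincides, up to the sign conventions of Theorem \ref{c54}, with the error system (\ref{c44}), while the $\bar x$-subsystem is $\SR_n\bar x+B_n\phi_n(\bar x-\bar e)$ — whereas a nonzero $\delta$ enters the $i$-th line of both subsystems as $\bar\delta_i=L^{-i}\delta_i$. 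Since $\mathrm{diag}(L^{1-i})$ commutes with the dilations of weights $r_0$ and $r_\infty$ and the time rescaling only multiplies the vector field by the constant $1/L$, this system is homogeneous in the bi-limit with weights $(r_0,r_0)$, $(r_\infty,r_\infty)$ for $(\bar x,\bar e)$, degrees $\dr_0,\dr_\infty$, and disturbance weights $\mathfrak{r}_{0,i}=r_{0,i}+\dr_0$, $\mathfrak{r}_{\infty,i}=r_{\infty,i}+\dr_\infty$; moreover the origin of $F$ and of $F_0,F_\infty$ is GAS, this being exactly the conclusion of Theorem \ref{c54} read in the new coordinates (which preserve GAS, homogeneity and the homogeneous approximations, the coordinate map being diagonal and hence dilation-respecting).

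Next, apply Corollary \ref{b36} to this system, with $(\bar x,\bar e)$ in the role of $x$, $\bar\delta$ in the role of $\delta$, and $z$ unchanged: it produces a constant $c_G>0$, independent of $L$ because the system is, such that for every $\gamma_z\in\KR$ and $\beta_\delta\in\KR\LR$ there is $\bar\beta\in\KR\LR$ with the property that any solution obeying the $z$-bound (\ref{LP11}) and the bound (\ref{b96}) with that $c_G$ satisfies $|(\bar x,\bar e)(\tau)|\le\bar\beta(|((\bar x,\bar e)(\sigma),z(\sigma))|,\tau-\sigma)$. It remains to fix $L^*$ so that, for $L\ge L^*\ge1$, the hypotheses (\ref{36}) and the $z$-bound of the present corollary translate into those two bounds. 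The $z$-bound transports after replacing $\gamma_z$ by $s\mapsto\gamma_z(L^{n-1}s)\in\KR$, since $x_j=L^{j-1}\bar x_j$ gives $|x(\kappa)|\le L^{n-1}|(\bar x,\bar e)(\kappa)|$ up to a dimensional constant. For the disturbance, observe that the exponents in (\ref{36}) are exactly $\frac{1-\dr_0(n-i-1)}{1-\dr_0(n-j)}=\frac{r_{0,i}+\dr_0}{r_{0,j}}=\frac{\mathfrak{r}_{0,i}}{r_{0,j}}$ and similarly with $\infty$, so that, using again $x_j=L^{j-1}\bar x_j$, along the trajectory
$$
L^{-i}|\delta_i|\ \le\ \max\Big\{\beta_\delta(|z(s)|,t-s)\,,\ c_0\sum_{j=1}^{i}L^{(j-1)\frac{\mathfrak{r}_{0,i}}{r_{0,j}}-i}|\bar x_j|^{\frac{\mathfrak{r}_{0,i}}{r_{0,j}}}+c_\infty\sum_{j=1}^{i}L^{(j-1)\frac{\mathfrak{r}_{\infty,i}}{r_{\infty,j}}-i}|\bar x_j|^{\frac{\mathfrak{r}_{\infty,i}}{r_{\infty,j}}}\Big\}\ ,
$$
where a supremum over $\kappa\in[s,t]$ is understood in the last two terms.

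Here the feedback-form restriction $j\le i$ is essential: for $1\le j\le i$ the exponent $(j-1)\frac{\mathfrak{r}_{0,i}}{r_{0,j}}-i$ is increasing in $j$, hence largest at $j=i$, where it equals $-1+(i-1)\dr_0/r_{0,i}$, which is strictly negative precisely because $(n-1)\dr_0<1$, i.e. $\dr_0<\frac1{n-1}$; the identical computation with $\dr_\infty$ handles the $\infty$-terms. Consequently every coefficient of $|\bar x_j|^{\cdots}$ tends to $0$ as $L\to+\infty$, and — this is where the prescribed $c_0,c_\infty$ get absorbed — one may choose $L^*$ so large that, for $L\ge L^*$, the inner sum is dominated by $c_G\,\mathfrak H\big(|(\bar x,\bar e)|_{r_0}^{\mathfrak{r}_{0,i}},|(\bar x,\bar e)|_{r_\infty}^{\mathfrak{r}_{\infty,i}}\big)$. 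Indeed this last function is positive definite and homogeneous in the bi-limit of degrees $\mathfrak{r}_{0,i},\mathfrak{r}_{\infty,i}$ by Example \ref{c23}, the ordering (\ref{b16}) between $\mathfrak{r}_{0,\cdot}$ and $\mathfrak{r}_{\infty,\cdot}$ being a consequence of $\dr_0\le\dr_\infty$ (the map $\dr\mapsto(1-\dr a)/(1-\dr b)$ is increasing when $b>a$, and here $b=n-j>n-i-1=a$ for $j\le i$); so Corollary \ref{b25} applies to each of the homogeneous-in-the-bi-limit functions $\sum_{j\le i}|\bar x_j|^{\mathfrak{r}_{0,i}/r_{0,j}}$ and $\sum_{j\le i}|\bar x_j|^{\mathfrak{r}_{\infty,i}/r_{\infty,j}}$, yielding a bound by $\mathfrak H(\cdots)$ with a constant that is made arbitrarily small by taking $L$ large. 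Thus (\ref{LP11}) and (\ref{b96}) hold with constant $c_G$ for every $L\ge L^*$, and Corollary \ref{b36} gives $|(\bar x,\bar e)(\tau)|\le\bar\beta(|((\bar x,\bar e)(\sigma),z(\sigma))|,\tau-\sigma)$ for $0\le\sigma\le\tau\le LT$.

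Finally, $L\ (\ge L^*)$ being fixed, undoing the rescalings $\tau=Lt$, $\bar x_i=L^{1-i}x_i$, $\bar e_i=\bar x_i-\hat\chi_i$ costs only $L$-dependent constants: $|x(t)|$ and $|\hat\XR_n(t)|$ are each bounded by $c(L)\,|(\bar x,\bar e)(Lt)|$, while $|((\bar x,\bar e)(Ls),z(Ls))|\le c'(L)\,|(x(s),\hat\XR_n(s),z(s))|$. Composing, the functions $\beta_x(r,\theta):=c(L)\,\bar\beta(c'(L)r,L\theta)$ and $\beta_{\hat x}$ defined analogously are of class $\KR\LR$, and we obtain $|x(t)|\le\beta_x(|(x(s),\hat\XR_n(s),z(s))|,t-s)$ and $|\hat\XR_n(t)|\le\beta_{\hat x}(|(x(s),\hat\XR_n(s),z(s))|,t-s)$, as claimed. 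The main difficulty is the third paragraph: extracting from (\ref{36}) via the weights (\ref{b60}) that the feedback-form triangular structure forces every power of $L$ in the rescaled disturbance to be strictly negative (which uses $\dr_0,\dr_\infty<\frac1{n-1}$), and recognizing that $\dr_0\le\dr_\infty$ is exactly the condition under which $\mathfrak H(|\cdot|_{r_0}^{\mathfrak{r}_{0,i}},|\cdot|_{r_\infty}^{\mathfrak{r}_{\infty,i}})$ is a legitimate bi-limit-homogeneous dominating function; everything else is bookkeeping on how dilations, time rescaling and homogeneous approximations interact.
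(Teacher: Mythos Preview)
Your proof is correct and follows essentially the same route as the paper's. The paper writes the closed loop in the coordinates $(\hat\XR_n,E_1)$ with $e_i=\hat\chi_i-L^{1-i}x_i$ and time $\tau=Lt$, so that the disturbance enters only the $E_1$-equation as $\mathfrak D_i(L)=L^{-i}\delta_i$; you use the linearly equivalent pair $(\bar x,\bar e)$, which makes the disturbance appear in both subsystems but changes nothing in the application of Corollary~\ref{b36}. Your exponent analysis (monotonicity of $(j-1)\mathfrak r_{0,i}/r_{0,j}-i$ in $j$, value $-1+(i-1)\dr_0/r_{0,i}<0$ at $j=i$) is an equivalent rewriting of the paper's argument via the monotonicity of $s\mapsto\frac{1-(n-i-1)s}{1-(n-j)s}$ on $(-1,\tfrac1{n-1})$; and your appeal to Corollary~\ref{b25} to dominate the rescaled sums by $\mathfrak H\big(|\cdot|_{r_0}^{\mathfrak r_{0,i}},|\cdot|_{r_\infty}^{\mathfrak r_{\infty,i}}\big)$ is exactly the step (\ref{LP10}) in the paper, the degree conditions of that corollary being precisely what your observation about $\dr_0\le\dr_\infty$ and the ordering (\ref{b16}) secures.
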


\begin{corollary}[Feedforward form]\label{c17}
%
If, in the design of $\phi_n$ and $K_1$, we select
$\dr_\infty\leq\dr_0$, then
for every positive real
numbers $c_0$
and $c_\infty$, there exists a real number $L^*>0$ such that for every
$L$ in $(0,L^*]$, the following holds~:\\
For every class $\mathcal{K}$ function $\gamma _z$ and
class $\KR\LR$ function $\beta_\delta$ we can find two class $\KR\LR$ functions $\beta_x$ and $\beta_{\hat x}$,
such that, for each function
$t\in [0,T)\mapsto (x(t), \hat \XR_n(t), \delta(t),z(t))$, $T\leq +\infty $,
with $(x,\hat \XR_n)$ $C^1$ and $\delta$ and $z$ continuous, which satisfies
(\ref{LP2}), (\ref{b84}),
and for $i$ in $\{1, \dots, n\}$ and $0\leq s\leq t<T$,
\\[0.7em]\null \hfill $\displaystyle
|z(t)|\; \leq \;
\max\left\{
\beta _\delta \Big(|z(s)|, t-s\Big)\,  ,\,
\sup_{s\leq \kappa \leq t}
\gamma _z(|x(\kappa)|)\right\}\  ,
$\hfill \null \\
$\displaystyle
|\delta_i (t)|\,\leq\, \max\left\{\vphantom{\sum_{j}^i}
\beta_\delta (|z(s)|, t-s),\right.
$\hfill\null\\[-0.3em]
\refstepcounter{equation}\label{c09}$(\theequation)$\null\hfill$\displaystyle
\left.
\sup_{s\leq \kappa\leq t}\left\{
c_0\,  \sum_{j=i+2}^n |x_j(\kappa)|^\frac{1-\dr_0(n-i-1)}{1-\dr_0(n-j)}
\;+\;
c_\infty\,\sum_{j=i+2}^n |x_j(\kappa)|^\frac{1-\dr_\infty(n-i-1)}{1-\dr_\infty(n-j)}\right\}
\right\}\  ,
$ \\[1em]
then we have for all $0\leq s \leq t \leq T$~:
$$
|x(t)|\leq \beta_x(|(x(s), \hat \XR_n(s), z(s))|, t-s)\quad, \qquad
|\hat \XR_n(t)|\leq \beta_{\hat x}(|(x(s), \hat \XR_n(s), z(s))|, t-s) \ .
$$
\end{corollary}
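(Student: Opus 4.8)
The statement will be obtained by bringing the perturbed closed loop (\ref{LP2})--(\ref{b84}) to the form covered by the small-gain Corollary~\ref{b36}, after normalising it to a fixed (i.e.\ $L$-independent) homogeneous-in-the-bi-limit vector field through a dilation of the state and a rescaling of time; the feedback case of Corollary~\ref{the_th} is the mirror argument with $0$ and $\infty$ interchanged and $L$ large.

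\textbf{Step 1: normalisation.} Introduce the dilated plant state $\xi$ with $\xi_i = x_i/L^{\,i-1}$ and rescale time by $\tau = L\,t$. Using $\dot x = \SR_n x + B_n u + \delta$, $u = L^n\phi_n(\hat\XR_n)$, and (\ref{b84}), one finds in the $\tau$ variable
\[
\frac{d}{d\tau}\hat\XR_n = \SR_n\hat\XR_n + B_n\phi_n(\hat\XR_n) + K_1(\xi_1-\hat\chi_1)\ ,\qquad \frac{d}{d\tau}\xi = \SR_n\xi + B_n\phi_n(\hat\XR_n) + \Delta\ ,
\]
with $\Delta = (\delta_1/L,\,\delta_2/L^2,\,\dots,\,\delta_n/L^n)^T$. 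The right-hand side, read as a vector field in $(\hat\XR_n,\xi,\Delta)$, is precisely the closed-loop field of (\ref{c55})--(\ref{b84}) expressed in these normalised coordinates (now $L$-free) and driven additively by $\Delta$; it is homogeneous in the bi-limit with state weights $(r_0,r_0)$, $(r_\infty,r_\infty)$, vector-field degrees $\dr_0,\dr_\infty$, and, since $\Delta_i$ enters the $\xi_i$-equation of an integrator chain, disturbance weights $\mathfrak r_{0,i} = \dr_0+r_{0,i} = 1-\dr_0(n-i-1)$ and $\mathfrak r_{\infty,i} = 1-\dr_\infty(n-i-1)$, which are positive because $\dr_0,\dr_\infty\in(-1,\frac1{n-1})$. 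By Theorem~\ref{c54} the origin is globally asymptotically stable for this field with $\Delta\equiv0$ and for its two homogeneous approximations, so the hypotheses of Corollary~\ref{b36} bearing on the unperturbed dynamics hold, and the assumption on $z$ in the statement is literally (\ref{LP11}) (with $\beta_\delta$ composed with $\tau\mapsto\tau/L$).

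\textbf{Step 2: the small-gain bound on $\Delta$.} It remains to derive the hypothesis (\ref{b96}) of Corollary~\ref{b36}, with $|x(\kappa)|$ replaced by $|(\hat\XR_n(\kappa),\xi(\kappa))|$, from the bound (\ref{c09}) on $\delta_i$. Substituting $x_j = L^{\,j-1}\xi_j$ in (\ref{c09}) and dividing by $L^{\,i}$ turns each summand into $c_0\,L^{\,p^0_{ij}}|\xi_j|^{\mathfrak r_{0,i}/r_{0,j}}$ or $c_\infty\,L^{\,p^\infty_{ij}}|\xi_j|^{\mathfrak r_{\infty,i}/r_{\infty,j}}$, where a computation with (\ref{b60}) gives $p^0_{ij} = (j-i-1)\bigl(1-\dr_0(n-1)\bigr)/r_{0,j}$ and likewise for $p^\infty_{ij}$. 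Since $j\ge i+2$ and $\dr_0,\dr_\infty\in(-1,\frac1{n-1})$, these exponents are strictly positive, hence the coefficients vanish as $L\to0$. Moreover $\dr_\infty\le\dr_0$ makes $\dr\mapsto\bigl(1-\dr(n-i-1)\bigr)/\bigl(1-\dr(n-j)\bigr)$ non-increasing on $(-1,\frac1{n-1})$ for $j\ge i+2$, so that $\mathfrak r_{0,i}/r_{0,j}\le\mathfrak r_{\infty,i}/r_{\infty,j}$; this is exactly the exponent condition (of the type in Example~\ref{b49}) under which Corollary~\ref{b25} bounds each of $|\xi_j|^{\mathfrak r_{0,i}/r_{0,j}}$ and $|\xi_j|^{\mathfrak r_{\infty,i}/r_{\infty,j}}$ by a constant times $\mathfrak H\!\left(|\xi|_{r_0}^{\mathfrak r_{0,i}},|\xi|_{r_\infty}^{\mathfrak r_{\infty,i}}\right)$, which — since $|\xi|_{r_0}\le|(\hat\XR_n,\xi)|_{(r_0,r_0)}$, $|\xi|_{r_\infty}\le|(\hat\XR_n,\xi)|_{(r_\infty,r_\infty)}$ and $\mathfrak H$ is increasing in both arguments — is in turn at most $\mathfrak H\!\left(|(\hat\XR_n,\xi)|_{(r_0,r_0)}^{\mathfrak r_{0,i}},|(\hat\XR_n,\xi)|_{(r_\infty,r_\infty)}^{\mathfrak r_{\infty,i}}\right)$. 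Choosing $L^\ast>0$ so small that, for all $L\le L^\ast$, the sum over $j$ of these coefficients times the domination constants does not exceed the constant $c_G$ furnished by Corollary~\ref{b36}, we conclude that (\ref{c09}) implies (\ref{b96}) for the normalised system.

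\textbf{Step 3: conclusion.} Corollary~\ref{b36} then yields a class $\KR\LR$ function $\beta$ with $|(\hat\XR_n(\tau),\xi(\tau))|\le\beta\bigl(|(\hat\XR_n(\sigma),\xi(\sigma),z(\sigma))|,\tau-\sigma\bigr)$; undoing $\tau=Lt$ and the linear dilation $\xi_i=x_i/L^{\,i-1}$, and using $|\xi(\sigma)|\le\mathrm{const}\,|(x(\sigma),\hat\XR_n(\sigma))|$, produces the announced $\beta_x$ and $\beta_{\hat x}$, the dependence on $\hat\XR_n(s)$ and $z(s)$ coming respectively from that last inequality and from the $z$-estimate inside Corollary~\ref{b36}. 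The delicate point is Step~2: one must keep simultaneous track of the two families of homogeneity weights and of the powers of $L$, and verify that the feedforward structure of (\ref{c09}) — each $\delta_i$ depending only on $x_{i+2},\dots,x_n$ — together with $\dr_\infty\le\dr_0$ is precisely what renders $\Delta$ an admissible perturbation for the small-gain theorem; for Corollary~\ref{the_th} the same computation shows the relevant $L$-exponents are negative (there $j\le i$), which is why $L$ must then be taken large.
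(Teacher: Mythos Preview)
Your argument is correct and follows essentially the same route as the paper: normalise via the dilation $x_i\mapsto x_i/L^{i-1}$ and the time rescaling $\tau=Lt$, invoke Theorem~\ref{c54} for the unperturbed closed loop, then show that the feedforward structure together with $\dr_\infty\le\dr_0$ makes the rescaled disturbance satisfy the small-gain bound~(\ref{b96}) with a coefficient that is a positive power of $L$, and conclude with Corollary~\ref{b36}. The only cosmetic difference is that the paper works in the coordinates $(\hat\XR_n,E_1)$ with $E_1=\hat\XR_n-\xi$ (which yields the cascade form~(\ref{c57})) whereas you stay in $(\hat\XR_n,\xi)$; since this is a dilation-compatible linear change of variables, nothing is affected, and your explicit formula $p^0_{ij}=(j-i-1)(1-\dr_0(n-1))/r_{0,j}$ is exactly the paper's exponent $(j-1)\frac{1-\dr_0(n-i-1)}{1-\dr_0(n-j)}-i$ in closed form.
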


\example Following example \ref{c53}, we can consider the case where the  $\delta _i$'s are outputs of auxiliary systems given in (\ref{c15}).
Suppose there exist $n$ positive definite and radially unbounded functions
$Z _i~:\RR^{n_i}\rightarrow \RR_+$, three class $\KR$ functions $\omega_1$,
$\omega _2$
 $\omega _3$,
and a positive real number $\epsilon$ in $(0,1)$ such that~:
$$
|\delta _i(z _i,x)|\,\leq\,\omega _1(x)+\omega _2(Z _i(z_i))
\quad,\qquad \frac{\partial Z _i}{\partial z _i}(z _i)\,g _i(z _i,x)
\;\leq\; -Z _i(z _i) \;+\; \omega _3(|x|)
\  ,
$$
then, if there exist two real number $\dr_0$ and $\dr_\infty$
satisfying $-1<\dr_0\leq\dr_\infty< \frac{1}{n-1}$ and
\begin{equation}\label{c56}
\omega _1(x) + \omega _2\left([1+\epsilon]\,\omega _3(|x|)\right)\leq
\left(\sum_{j=1}^i |x_j|^\frac{1-{\dr}_0(n-i-1)}{1-{\dr}_0(n-j)}
+ \sum_{j=1}^i |x_j|^\frac{1-{\dr}_\infty(n-i-1)}{1-{\dr}_\infty(n-j)}\right)
\ .
\end{equation}
then Corollary (\ref{the_th}) gives $L^*>0$ such that for all
$L$ in $[L^*,+\infty )$, the output feedback
(\ref{b84}) is globally asymptotically stabilizing.
Compared to already published results (see \cite{Khalil-Saberi}
and \cite{Qian}, for instance), the novelty
is in the simultaneous presence of the terms $|x_j|^\frac{1-\dr_0(n-i-1)}{1-\dr_0(n-j)}$ and
$|x_j|^\frac{1-\dr_\infty(n-i-1)}{1-\dr_\infty(n-j)}$.

On the other hand if there exists
two real number $\dr_0$ and $\dr_\infty$ satisfying
$-1<\dr_\infty\leq\dr_0< \frac{1}{n-1}$ and
$$\omega _1(x) + \omega _2\left([1+\epsilon]\,\omega _3(|x|)\right)\;\leq\;
\left(\sum_{j=i+2}^n |x_j|^\frac{1-{\dr}_0(n-i-1)}{1-{\dr}_0(n-j)}
\;+\; \sum_{j=i+2}^n |x_j|^\frac{1-{\dr}_\infty(n-i-1)}{1-{\dr}_\infty(n-j)}\right)
\ .
$$
then Corollary (\ref{c17}) gives $L^*>0$ such that for all
$L$ in $(0,L^*]$, the output feedback (\ref{b84}) is
globally asymptotically stabilizing.

\example Consider the illustrative system (\ref{b18}).
The bound (\ref{c56}) gives the condition~:
\begin{equation}
\label{LP18}
0\; < \; q\; < \; p \; <\; 2
\  .
\end{equation}
This is almost the least conservative condition we can
obtain with the domination approach. Specifically, it is shown
in \cite{Mazenc-Praly-Dayawansa} that, when $p > 2$, there is no
stabilizing output feedback. However, when $p = 2$, (\ref{c56}) is
not satisfied although the stabilization problem is
solvable (see \cite{Mazenc-Praly-Dayawansa}).

By Corollary \ref{b64}, when (\ref{LP18}) holds,
the output feedback
$$
\renewcommand{\arraystretch}{1.5}
u\,=\, L^2\phi_2(\hat \chi_1, \hat \chi_2)\quad,\qquad\left \{\begin{array}{rcl}
\dot {\hat \chi}_1 &=&L\,  \hat \chi_{2} -L\,  q_1(\ell_1 e_1)
\  ,
\\
\dot {\hat \chi}_{2} &=&\displaystyle \frac{u}{L} - L\, q_2(\ell_2\,q_1(\ell_1e_1 ))\  ,
\\
e_1 &=&\hat \chi_1 - y
\  .
\end{array}\right.
$$
with $\ell_1$, $\ell_2$, $\phi_2$, $q_1$ and $q_2$ defined in (\ref{c48}) and
(\ref{c49})
with picking
$\dr_0$ in $(-1, q-1]$ and
$\dr_\infty $ in $[p-1,1)$,
globally asymptotically stabilizes the origin of the system
(\ref{b18}),
with $L$ is chosen sufficiently large.
Furthermore, if
$\dr_0$ is chosen strictly negative and $\dr_\infty$ strictly
positive,  by Corollary \ref{b64}, convergence  to the origin occurs
in finite time, uniformly in the initial conditions.


\example \label{c45}
To illustrate the feedforward result consider the system\footnote{Recall the notation (\ref{b52}).}~:
$$
\dot x_1 = x_2 + x_3^{\frac{3}{2}} + z^{3}
\  ,\quad
\dot x_2 = x_3
\  ,\quad
\dot x_3 = u
\  ,\quad
\dot z =-z^4 + x_3
\  ,\quad
y=x_1\ .
$$
For any $\varepsilon>0$, there exists a class $\KR\LR$ function $\beta_\delta$ such that~:
$$
|z(t)|^3\,\leq\, \max\left\{\beta_\delta(|z(s)|,t-s),\quad
(1+\varepsilon )\,  \sup_{s\leq \kappa \leq t}|x_3(\kappa)|^{\frac{3}{4}}\right\}
$$
Therefore by letting $\delta _1=x_3^{\frac{3}{2}} + z^{3}$ we get, for all $0\leq s\leq t< T$ on the time of existence of the solutions~:
$$
|\delta _1(t)|\,\leq\, \max\left\{\beta_\delta(|z(s)|,t-s),\quad \sup_{s\leq \kappa \leq t}
(1+\varepsilon )|x_3(\kappa)|^{\frac{3}{4}}\,+\,|x_3(\kappa)|^{\frac{3}{2}}\right\}
\  .$$
This is inequality (\ref{c09})  with
$\dr_0 = -\frac{1}{2}$ and $\dr_\infty \,=\,\frac{1}{4}$.
Consequently, Corollary \ref{c17} says that it is possible to
design a globally asymptotically stabilizing output feedback.
%

\subsection{Proofs of output feedback results}~
\label{LP17}
\\
\textbf{Proof of Theorem \ref{c54}~:}
\begingroup
The homogeneous in the bi-limit state feedback $\phi_n$ and
the homogeneous in the bi-limit vector field $K_1$ involved in this
feedback are obtained by following the procedures given
in Sections \ref{b69} and \ref{b99}. They are
such that the origins   is a globally asymptotically
stable equilibrium of   the systems given in (\ref{b38}) and (\ref{b37}).
To this end,
as in \cite{Qian},
%
we write the dynamics of this system in the coordinates $\hat {\XR}_n=(\hat
\chi_1, \dots, \hat \chi_n)$ and $E_1 = (e_1, \dots, e_n)$ and in the time $\tau$ defined by~:
\begin{equation}
\label{b75}
e_{i}\;=\; \hat \chi_i \;-\;\frac{x_i}{L^{ i-1}}
\quad,\qquad \frac{d}{d\tau}\;=\;\frac{1}{L}\,  \frac{d}{dt}
\  .
\end{equation}
This yields~:
\begin{equation}\label{72}
\left\{
\begin{array}{rcl}
\displaystyle\frac{d}{d\tau} \; \widehat \XR_n &=&
\SR_{n}\,  \widehat \XR_{n} + B_{n}  \phi_n(\hat \XR_n)) \,+\, K_1(e_1)
\\[1em]
\displaystyle\frac{d}{d\tau} \; E_1 &=&\SR_n\,  E_1 + K_{1}(e_1)
\end{array}\right.
\end{equation}
with $E_1=(e_1,\ldots,e_n)$, $\widehat \XR_n=(\hat \chi_1,\ldots,\hat
\chi_n)$.
The right hand side of  (\ref{72}) is a vector field which is homogeneous in the bi-limit with weights $((r_0, r_0),(r_\infty, r_\infty))$.

Given
$d_U > \max_j\{r_{0,j},r_{\infty,j}\}$, by applying Theorem \ref{b26}
twice, we get
two $C^1$, proper and positive definite functions
$V~:\RR^n\rightarrow\RR_+$ and $W~:\RR^n\rightarrow\RR_+$ such that for each $i$ in $\{1, \dots, n\}$, the functions  $\frac{\partial V}{\partial x_i}$ and
$\frac{\partial W}{\partial e_i}$ are homogeneous in the bi-limit, with weights $r_0$ and $r_\infty$, degrees $d_U-r_{0,i}$ and $d_U-r_{\infty,i}$
 and approximating functions
$\frac{\partial V_{0}}{\partial \hat \chi_j}$, $\frac{\partial V_{\infty}}{\partial \hat \chi_j}$
and $\frac{\partial W_{0}}{\partial e_j}$, $\frac{\partial W_{\infty}}{\partial e_j}$.
Moreover, for all $\widehat\XR_n\,\neq\,0$~:
\begin{eqnarray}
\nonumber
\frac{\partial V}{\partial \widehat \XR}(\widehat \XR_n)
\left[\SR_{n}\,  \widehat  \XR_n + B_{n} \phi_n(\widehat  \XR_n) \right]
&<&0
\  ,
\\\label{LP5}
\frac{\partial V_{0}}{\partial \widehat \XR_n}(\widehat \XR_n)\,
\left[\SR_{n}\,  \widehat  \XR_n + B_{n}  \phi_{n,0}(\widehat  \XR_n)\right]
&<&0
\  ,
\\\nonumber
\frac{\partial V_{\infty}}{\partial \widehat \XR_n}(\widehat \XR_n)
\left[\SR_{n}\,  \widehat  \XR_n + B_{n} \phi_{n,\infty}(\widehat  \XR_n)\right]
&<&0
\ ,
\end{eqnarray}
and for all $E_1\,\neq\,0$~:
\begin{eqnarray}
\nonumber
\frac{\partial W}{\partial E_1}(E_1)\,
(\SR_n\, E_1 + K_1(e_1)) &<& 0
\  ,
\\\label{c16}
\frac{\partial W_{0}}{\partial E_1}(E_1)\,
(\SR_n\,  E_1 + K_{1,0}(e_1)) &<& 0
\  ,
\\\nonumber
\frac{\partial W_{\infty}}{\partial E_1}(E_1)\,
(\SR_n\,  E_1 + K_{1,\infty}(e_1)) &<&0
\  .
\end{eqnarray}
Consider now the Lyapunov function candidate~:
\begin{equation}\label{b73}
U(\hat {\XR}_n,E_1)\;=\; V(\hat {\XR}_n)\,+\,
\coef
\,W(E_1)\ ,
\end{equation}
%
where $\coef $ is a positive real number to be specified.
Let~:
\begin{eqnarray*}
&\eta(\hat \XR_n, E_1)=&\displaystyle \frac{\partial V}{\partial \widehat\XR_n}(\widehat\XR_n)
\left(\SR_{n}\,  \hat  \XR_n + B_{n}  \phi_n(\hat \XR_n)\,+\, K_1(e_1)\right)\\
&\gamma(E_1) =&\displaystyle -\frac{\partial W}{\partial E_1}
(E_1)\left(\SR_n\,  E_1 \,+\, K_1(e_1)\right)\ .
\end{eqnarray*}
These two functions are continuous and  homogeneous in the bi-limit with associated triples
$\left((r_0,r_0), d_U+\dr_0,\eta_0\right)$, $\left((r_\infty,r_\infty), d_U+\dr_\infty,\eta_\infty\right)$
 and $\left((r_0,r_0), d_U+\dr_0,\gamma_0\right)$, $\left((r_\infty,r_\infty), d_U+\dr_\infty,\gamma_\infty\right)$,
where  $\gamma_0$, $\gamma_\infty$ and $\eta_0$, $\eta_\infty$ are continuous functions.
Furthermore, by (\ref{c16}), $\gamma(E_1)$ is  negative definite. Hence, by (\ref{LP5}), we have~:
\begin{eqnarray*}
\left\{(\hat \XR_n,E_1)\in \RR^{2n}\setminus\{0\}\: :\;
\gamma(E_1) \,=\, 0 \right\}
&\ \subseteq\ &
\left\{(\hat \XR_n,E_1)\in \RR^{2n}\: :\;
\eta(\hat \XR_n, E_1) \,<\, 0\right\}\  ,
\\
\left\{(\hat \XR_n,E_1)\in \RR^{2n}\setminus\{0\}\: :\;
\gamma_0(E_1) \,=\, 0 \right\}
&\ \subseteq\ &
\left\{(\hat \XR_n,E_1)\in \RR^{2n}\: :\;
\eta_0(\hat \XR_n, E_1) \,<\, 0\right\}\  ,
\\
\left\{(\hat \XR_n,E_1)\in \RR^{2n}\setminus\{0\}\: :\;
\gamma_\infty (E_1) \,=\, 0 \right\}
&\ \subseteq\ &
\left\{(\hat \XR_n,E_1)\in \RR^{2n}\: :\;
\eta_\infty (\hat \XR_n, E_1) \,<\, 0\right\}\  .
\end{eqnarray*}
Consequently, by Lemma \ref{3}, there exists a positive real number
$\coef ^*$ such that, for all $\coef  >\coef  ^*$ and all $(\hat \XR_n,E_1)\neq (0,0)$,
the Lyapunov function $U$, defined in (\ref{b73}), satisfies~:\\[1em]
\null \quad$\displaystyle
\frac{\partial U}{\partial \hat {\XR}_n}(\hat {\XR}_n,E_1)
\left(\SR_{n}\,  \hat  \XR_n + B_{n}  \phi_n(\hat \XR_n) \,+\, K_1(e_1)\right)
$\hfill\null\\\null\hfill$\displaystyle
+\; \frac{\partial U}{\partial E_1}(\hat
{\XR}_n,E_1) (E_1)\left(\SR_n\,  E_1 \,+\, K_1(e_1)\right)\;<\;0
$\quad \null \\[1em]
and the same holds for the homogeneous approximations in the $0$-limit and in the $\infty$-limit, hence the claim.
\null \ \null\hfill$\Box$\endgroup
\par\vspace{1em}\noindent
%
\textbf{Proof of Corollary \ref{the_th}~:}\begingroup
\
We write the dynamics of the system \ref{LP2} in the coordinates $\hat {\XR}_n$ and $E_1$ and in the time $\tau$ given in (\ref{b75}).
This yields~:
\begin{equation}\label{c57}
\left\{
\begin{array}{rcl}
\displaystyle\frac{d}{d\tau} \; \widehat \XR_n &=&
\SR_{n}\,  \widehat \XR_{n} + B_{n}  \phi_n(\hat \XR_n)) \,+\, K_1(e_1)
\\[1em]
\displaystyle\frac{d}{d\tau} \; E_1 &=&\SR_n\,  E_1 + K_{1}(e_1)\,+\, \mathfrak D(L)
\end{array}\right.
\end{equation}
with~:
$$
\mathfrak D(L)\;=\;
\left(\frac{\delta_1}{L},\ldots,\frac{\delta_n}{L^{n}}\right) \  .
$$
We denote the solution of this system starting from $(\widehat \XR_n(0),E_1(0))$ in $\RR^{2n}$ at time $\tau$ by $(\widehat \XR_{\tau,n}(\tau),E_{\tau,1}(\tau))$.
We have~:
\begin{equation}\label{c08}
x _i(t)\;=\; L^{i-1} \, \left(\hat \chi _{\tau,i}\left(Lt\right)\,-\,e _{\tau,i}\left(Lt\right)\right)\ .
\end{equation}
The right hand side of  (\ref{c57}) is a vector field which is homogeneous in the bi-limit with weights $((r_0, r_0),(r_\infty, r_\infty))$ for $(\widehat \XR_n,E_1)$ and $(\mathfrak r_0,\mathfrak r_\infty)$ for $\mathfrak D(L)$ where $\mathfrak r_{0,i} = r_{0,i} + \dr_0$ and $\mathfrak r_{\infty,i} = r_{\infty,i} + \dr_\infty$ for each $i$ in $\{1, \dots, n\}$.
%

The time function $\tau \mapsto \delta (\frac{\tau
}{L})$ is considered as an input
and when $\mathfrak D(L)=0$, Theorem \ref{c54} implies global asymptotic stability of the origin of the
system (\ref{c57}) and of its homogeneous approximations.
To complete the proof we show that there exists $L^*$
such that the "input" $\mathfrak D(L)$
satisfies the small-gain condition (\ref{b96})
of Corollary \ref{b36} for all $L>L^*$.
Using equations (\ref{b75}) and (\ref{c08}), assumption (\ref{36}) becomes, for all $0\leq \sigma\leq \tau< LT$, and all $i$ in $\{1, \dots, n\}~$~:
\\[0.7em] \null \hskip -1pt$
\frac{\left|\delta_i\left(\frac{\tau}{L}\right)\right|}{L^{i}}
\leq\max\left\{
\vphantom{\displaystyle \frac{a^{\frac{a}{a}}}{a}}\textstyle
\frac{1}{L^i}\beta _\delta \left(\left|z(\frac{\sigma }{L}))\right|,\frac{\tau-\sigma }{L}\right),
\right.
$\hfill\null\\\null\hfill$
L^{-i}\sup_{\sigma \leq \kappa\leq \tau}\left\{
c_0\sum_{j=1}^i\left |L^{(j-1)}
   (\hat \chi_{\tau,j}(\kappa) - e_{\tau,j}(\kappa))
\right|^\frac{1-\dr_0(n-i-1)}{1-\dr_0(n-j)}
\right.
$ \hfill \null \\
\refstepcounter{equation}\label{b89}$(\theequation)$\null \hfill $
\left.
\left.+ c_\infty \sum_{j=1}^i\,\left |L^{(j-1)}
(\hat \chi_{\tau,j}(\kappa) - e_{\tau,j}(\kappa))
\right|^\frac{1-\dr_\infty(n-i-1)}{1-\dr_\infty(n-j)}\right\}\; \right\}
\  .
$ \\[1em]
Note that when $1\leq j\leq i\leq n$  the function
$s \mapsto \frac{1 - (n-i-1)\,s}{1 - (n-j)\,s}$ is strictly increasing,
mapping $\left(-1,\frac{1}{n-1}\right)$ in $\left(\frac{n-i}{n+1-j}, \frac{i}{j-1}\right)$.
As $\dr_0\,\leq\,\dr_\infty\,<\,\frac{1}{n-1}$, we have for all $1\leq j\leq i\leq n$~:
$$
\frac{1-\dr_0(n-i-1)}{1-\dr_0(n-j)}\;\leq\;\frac{1-\dr_\infty(n-i-1)}{1-\dr_\infty(n-j)}
\;<\;\frac{i}{j-1}\  .
$$
Hence, selecting $L\geq 1$, there exists  a real number $\epsilon>0$ such that~:
$$
L^{-\epsilon}\;\geq\;
L^{(j-1)\frac{1-\dr_\infty(n-i-1)}{1-\dr_\infty(n-j)}-i}\;\geq\;
L^{(j-1)\frac{1-\dr_0(n-i-1)}{1-\dr_0(n-j)}-i}
\  .
$$
This implies
\\[1em] \null \hskip -1pt$\displaystyle
\frac{\left|\delta_i\left(\frac{\tau}{L}\right)\right|}{L^{i}}
\leq\max\left\{
\frac{1}{L^i}\beta _\delta \left(\left|z(\frac{\sigma }{L})\right|,\frac{\tau-\sigma }{L}\right),
\right.
$\hfill\null\\\null\hfill\qquad$
L^{-\epsilon }\sup_{\sigma \leq \kappa\leq \tau}\left\{
c_0\sum_{j=1}^i\left |
   (\hat \chi_{\tau,j}(\kappa) - e_{\tau,j}(\kappa))
\right|^\frac{1-\dr_0(n-i-1)}{1-\dr_0(n-j)}
\right.$\hfill \null \\[-0.5em]\null \hfill $
\left.
\left.
\vphantom{\displaystyle \frac{a^{\frac{a}{a}}}{a}}\textstyle
+ c_\infty \sum_{j=1}^i\,\left |
(\hat \chi_{\tau,j}(\kappa) - e_{\tau,j}(\kappa))
\right|^\frac{1-\dr_\infty(n-i-1)}{1-\dr_\infty(n-j)}\right\}\; \right\}
\  .
$\null\\[0.5em]
%
On the other hand,
the function
$$
(\widehat \XR_n,E_1)\; \mapsto\;
c_0\sum_{j=1}^i\,\left |\hat \chi_j - e_j\right|^\frac{1-\dr_0(n-i-1)}{1-\dr_0(n-j)}
+ c_\infty \sum_{j=1}^i\,\left |\hat \chi_j - e_j\right|^\frac{1-\dr_\infty (n-i-1)}{1-\dr_\infty (n-j)}
$$
is homogeneous in the bi-limit with weights
$(r_0,r_0)$  and $(r_\infty,r_\infty)$ and degrees
$1-\dr_0(n-i-1) = r_{0,i}+\dr_0$ and $1-\dr_\infty (n-i-1)= r_{\infty,i}+\dr_\infty$
(see  (\ref{b60})).
Hence, by Corollary \ref{b25},
there exists a positive real
number $c_1$ such that~:
\\[1em]$\null\quad\displaystyle
c_0\,  \sum_{j=1}^i\,\left |\hat \chi_j - e_j\right|^\frac{1-\dr_0(n-i-1)}{1-\dr_0(n-j)}
+ c_\infty \,  \sum_{j=1}^i\,\left |\hat \chi_j - e_j\right|^\frac{1-\dr_\infty (n-i-1)}{1-\dr_\infty (n-j)}
$\\[0.3em]
\refstepcounter{equation}\label{LP10}(\theequation)\null\hfill$
\leq \; c_1\,\mathfrak{H}\left(
|(\widehat \XR_n,E_1)|_{(r_0,r_0)}^{\dr_0+ r_{0,i}}
, |(\widehat \XR_n,E_1)|_{(r_\infty,r_\infty)}^{\dr_\infty+ r_{\infty,i}}
\right)\  .
\quad$\\[1em]
Hence, by Corollary \ref{b36}
(applied in the $\tau $ time-scale),
there exists $c_G $, such that for any
$L^*$ large enough such that $ c_1 L^{*-\varepsilon }\leq c_G $, the conclusion holds.
\null \ \null\hfill$\Box$\endgroup
\par\vspace{1em}\noindent
\textbf{Proof of Corollary \ref{c17}~:}\begingroup
\
The proof is similar to the previous one with the only difference
that,
when $i$ and $j$ satisfy $3\leq  i+2\leq j\leq n$,  the function
$s \mapsto \frac{1 - (n-i-1)\,s}{1 - (n-j)\,s}$ is strictly decreasing
mapping $\left(-1,\frac{1}{n-1}\right)$ in
$\left(\frac{i}{j-1},\frac{n-i}{n+1-j}\right)$.
Moreover the condition $-1<\dr_\infty\leq \dr_0 <\frac{1}{n-1}$
gives the inequalities
$$
\frac{1-\dr_\infty(n-i-1)}{1-\dr_\infty(n-j)}
\;\geq\;
\frac{1-\dr_0(n-i-1)}{1-\dr_0(n-j)}
\;>\;\frac{i}{j-1}\  .
$$
Hence (\ref{LP10}) holds and by
selecting $L<1$ we obtain the existence of a positive real number $\epsilon$ such that~:
$$
L^{\epsilon}\;\geq\;
L^{(j-1)\frac{1-\dr_0(n-i-1)}{1-\dr_0(n-j)}-i}\;\geq\;
L^{(j-1)\frac{1-\dr_\infty(n-i-1)}{1-\dr_\infty(n-j)}-i}
\  .
$$
From (\ref{c09}), this yields, for all $0\leq \sigma\leq \tau< LT$, and all $i$ in $\{1, \dots, n\}$~:~:
\\[1em] \null \hskip -1pt$\displaystyle
\frac{\left|\delta_i\left(\frac{\tau}{L}\right)\right|}{L^{i}}
\leq\max\left\{
\frac{1}{L^i}\beta _\delta \left(\left|z(\frac{\sigma }{L})\right|,\frac{\tau-\sigma }{L}\right),
\right.
$\hfill\null\\\null\hfill$
L^{\epsilon }\sup_{\sigma \leq \kappa\leq \tau}\left\{
c_0\sum_{j=i+2}^n\left |
   (\hat \chi_{\tau,j}(\kappa) - e_{\tau,j}(\kappa))
\right|^\frac{1-\dr_0(n-i-1)}{1-\dr_0(n-j)}
\right.$\hfill \null \\[-0.5em]\null \hfill $
\left.
\left.
\vphantom{\displaystyle \frac{a^{\frac{a}{a}}}{a}}\textstyle
+ c_\infty \sum_{j=i+2}^n\,\left |
(\hat \chi_{\tau,j}(\kappa) - e_{\tau,j}(\kappa))
\right|^\frac{1-\dr_\infty(n-i-1)}{1-\dr_\infty(n-j)}\right\}\; \right\}
\  .
$\quad \null \\[1em]
From Corollary \ref{b36}, the result holds  for all $L^*$ small enough to satisfy
$ c_1 L^{*\varepsilon }\leq c_G $.
\null \ \null\hfill$\Box$\endgroup\par

\section{Conclusion}
We have presented two new tools that can be useful in nonlinear
control design.
The first one is introduced to formalize the notion
of homogeneous approximation valid both at the origin and
at infinity.
With this formalism we have given several novel results concerning
asymptotic stability, robustness analysis and also finite time convergence
(uniformly in the initial conditions).
The second one is a new recursive design for an observer for a chain
of integrators.
The combination of these two tools allows to obtain a new result
on stabilization by output feedback for systems whose dominant
homogeneous  in the bi-limit part is a chain of integrators.

\section*{Acknowledgments}
The second author is extremely grateful to Wilfrid Perruquetti and Emmanuel Moulay   for the many discussions he has had about the notion of homogeneity in the bi-limit.
Also, all the authors would like to thank the anonymous reviewers
for their comments which were extremely helpful to improve the quality of the paper.

\appendix

\section{Proof of Proposition \ref{prop1}}
\label{c18}
We give only the proof in the $0$-limit case since the $\infty$-limit case is similar.
Let $C$ be an arbitrary compact subset of  $\RR^n\setminus\{0\}$ and $\epsilon $ any  strictly positive real number.
By definition of homogeneity in the $0$-limit, there exists $\lambda_1\,>\,0$ such that we have~:
$$
\left|\frac{\phi(\lambda^{r_{\phi,0}}\diamond x)}{\lambda^{d_{\phi,0}}}  -  \phi_0(x)\right |
\;\leq\; 1 \qquad,\quad \forall \; x\;\in\;C
\  ,\quad
\forall \lambda\in (0, \lambda_1]\ .
$$
Hence, as $\phi_0$ is a continuous function on $\RR^n$, for all
$\lambda$ in $(0, \lambda_1]$, the function
$x\mapsto \frac{\phi(\lambda^{r_0}\diamond\, x)}{\lambda^{d_{\phi,0}}}$
takes its values in a compact set
$C_\phi = \phi_0(C) + B_{1}$ where $B_{1}$ is the unity ball.

Now, as $\zeta_0$ is continuous on the compact subset $C_\phi$,
it is uniformly continuous, i.e. there exists $\nu > 0$ such that~:
$$
|z_1-z_2|\;<\;\nu\qquad \Longrightarrow\qquad
|\zeta_0(z_1)-\zeta_0(z_2)|<\epsilon
\  .
$$
Also there exists
$\mu _\epsilon>0$ satisfying~:
$$
\left|\frac{\zeta(\mu^{r_{\zeta,0}} z)}{\mu^{d_{\zeta,0}}}  -  \zeta_0(z)\right |
\;\leq\; \epsilon \quad,\qquad\forall\;z\;\in\;C_\phi
\  ,\quad \forall \mu\in (0, \mu_\epsilon]\ ,
$$
or equivalently, since $d_{\phi,0}>0$ ~:
$$
\left|\frac{\zeta(\lambda^{d_{\phi,0}} z)}{\lambda^\frac{d_{\phi,0}\,{d_{\zeta,0}}}{r_{\zeta,0}}}
 -  \zeta_0(z)\right | \;\leq\; \epsilon \quad,\qquad\forall\;z\;\in\;C_\phi
\  ,\quad
\forall \lambda \in \left(0,
 \mu_\epsilon^\frac{r_{\zeta,0}}{d_{\phi,0}}\right]\ .
$$
Similarly, there exits $\lambda _\nu $ such that~:
$$
\left|\frac{\phi(\lambda^{r_{\phi,0}}\diamond x)}{\lambda^{d_{\phi,0}}}
-  \phi_0(x)\right | \;\leq\; \nu \qquad,\quad \forall \; x\;\in\;C
\  ,\quad
\forall \lambda\in (0, \lambda_\nu]\ .
$$
It follows that~:
\begin{eqnarray*}
\left|\frac{\zeta( \phi(\lambda^{r_{\phi,0}}\diamond x))}{\lambda^\frac{d_{\phi,0}\,{d_{\zeta,0}}}{r_{\zeta,0}}}  -  \zeta_0\left(\phi_0(x)\right)\right | &\leq& \left|\frac{\zeta( \phi(\lambda^{r_{\phi,0}}\diamond x))}{\lambda^\frac{d_{\phi,0}\,{d_{\zeta,0}}}{r_{\zeta,0}}}  -  \zeta_0\left(\frac{\phi(\lambda^{r_{\phi,0}}\diamond x)}{\lambda^{d_{\phi,0}}}\right)\right | \\\nonumber
&&\qquad+\;  \left|\zeta_0\left(\frac{\phi(\lambda^{r_{\phi,0}}\diamond x)}{\lambda^{d_{\phi,0}}}\right) -  \zeta_0\left(\phi_0(x)\right)\right |\ ,\\
&\leq& 2\,\epsilon \qquad,\quad \forall \; x\;\in\;C
\  ,\quad
\forall \lambda\in \min\left\{\lambda _1,\lambda_\nu,
 \mu_\epsilon^\frac{r_{\zeta,0}}{d_{\phi,0}}\right\}\ .
\end{eqnarray*}
This establishes homogeneity in the $0$-limit of the function $\zeta\circ \phi$.

\section{Proof of Proposition \ref{b32}}
\label{c21}
We give only the proof in the $0$-limit case since the $\infty$-limit case is similar.
The function $\phi$ being a bijection, we can assume
without loss of generality that it is a strictly increasing function
(otherwise we take $-\phi$). This together with homogeneity in the
$0$-limit, imply that $\varphi _0$ is strictly positive.
Moreover, for each $\delta >0 $, there exists $t_{0}(\delta)\,>\,0$ such that~:
$$
\left | \frac{\phi(t)}{t^{d_0}} -  \varphi_0\right| \;\leq\; \delta\quad,\qquad\forall \; t\,\in\,(0,t_0(\delta)]\ .
$$
By letting $\lambda\;=\; \phi(t)$, this gives~:
$$
\varphi_0 - \delta  \;\leq\;\frac{\lambda}{\phi^{-1}(\lambda)^{d_0}}
\;\leq\; \varphi_0 + \delta\quad,\qquad\forall \;
\lambda\,\in\,(0,\phi (t_0(\delta))]\;,\quad \forall \,\delta \,>\,0\ .
$$
Since for $\delta<\varphi_0$ the term on the left is strictly positive, these inequalities
give~:
$$
  \left(\frac{1}{\varphi_0 + \delta}\right)^\frac{1}{d_0}\;\leq\;\frac{\phi^{-1}(\lambda)}{\lambda^\frac{1}{d_0}}  \;\leq\; \left(\frac{1}{\varphi_0 - \delta}\right)^\frac{1}{d_0}\;,\quad\forall \; \lambda\,\in\,(0,\phi^{-1}(t_0(\delta))]\,,\; \forall \delta \,\in\,(0,\varphi _0)\ .
$$
Then since the function
$\delta\,\mapsto\,\left(\frac{1}{\varphi_0 - \delta}\right)^\frac{1}{d_0}$
is continuous at zero, for every $\epsilon_1>0$, there exists $\delta_1(\epsilon_1)>0$ satisfying~:
$$
 \left(\frac{1}{\varphi_0 }\right)^\frac{1}{d_0}\;-\; \epsilon_1
\; \leq \; \left(\frac{1}{\varphi_0 + \delta_1(\epsilon_1)}\right)^\frac{1}{d_0}
\; \leq \; \left(\frac{1}{\varphi_0 - \delta_1(\epsilon_1)}\right)^\frac{1}{d_0}
\; \leq \;  \left(\frac{1}{\varphi_0 }\right)^\frac{1}{d_0}\;+\; \epsilon_1
\  .
$$
This yields~:
$$
\left | \frac{\phi^{-1}(\lambda)}{\lambda^\frac{1}{d_0}} -  \left(\frac{1}{\varphi_0}\right)^\frac{1}{d_0}\right| \;\leq\;
\epsilon_1
\qquad \forall \lambda \in (0,\lambda _-(\epsilon _1)]\  ,
$$
with $\lambda _-(\epsilon _1)=\phi (t_0(\delta_1(\epsilon_1)))$.
With a similar argument we get~:
$$
\left | \frac{\phi^{-1}(-\lambda)}{\lambda^\frac{1}{d_0}} +  \left(\frac{1}{\varphi_0}\right)^\frac{1}{d_0}\right| \;\leq\; \epsilon_1
\qquad \forall \lambda \in (0,\lambda _+(\varepsilon _1)]\ ,
$$
for some $\lambda _+>0$. Let $\lambda _0=\min\{\lambda _-,\lambda _+\}$.

Now, for $x\neq 0$ and $\lambda >0$, we have~:
$$
\left | \frac{\phi^{-1}(\lambda x)}{\lambda^\frac{1}{d_0}} -  \left(\frac{x}{\varphi_0}\right)^\frac{1}{d_0}\right|
= |x|^\frac{1}{d_0}
\left | \frac{\phi^{-1}(\lambda x)}{(x\lambda)^\frac{1}{d_0}} -  \left(\frac{1}{\varphi_0}\right)^\frac{1}{d_0}\right|
\  .$$
Therefore, for any
compact set $C$ of $\RR\setminus\{0\}$ and any $\epsilon >0$, by letting
$\epsilon _1=\frac{\epsilon}{\max_{x\in C}|x|^\frac{1}{d_0}}$,
we have~:
$$
|x|^{\frac{1}{d_0}} \epsilon _1 \; \leq \; \epsilon
\quad ,\qquad
0\; <\; |\lambda x|\; \leq\; \lambda_0\left(\epsilon_1\right)
\qquad
\forall \lambda \in \left(0, \frac{\lambda_0\left(\epsilon_1\right)}{\max_{x\in C}|x|}\right]
\; ,\  \forall x\in C
$$
and therefore~:
$$
\left | \frac{\phi^{-1}(\lambda x)}{\lambda^\frac{1}{d_0}} -  \left(\frac{x}{\varphi_0}\right)^\frac{1}{d_0}\right|
\;\leq\;\epsilon
\qquad
\forall \lambda \in \left(0, \frac{\lambda_0\left(\epsilon_1\right)}{\max_{x\in C}|x|}\right]
\; ,\  \forall x\in C\ .
$$
This establishes homogeneity in the $0$-limit of the function $\phi^{-1}$.

\section{Proof of Lemma \ref{3}}
\label{c20}
The proof of this lemma  is divided into three parts.
\begin{enumerate}
\item We first show, by contradiction, that there exists a real number $c_0$
satisfying~:
$$
\eta_0(\theta) \;-\; c\,\gamma_0(\theta) \;<\; 0\qquad
\forall \theta  \in  S_{r_0}\  ,\quad
\forall c\,\geq\,c_0\ .
$$
Suppose there is no such $c_0$.
This means there is a  sequence $(\theta_i)_{i\in \NN}$ in $S_{r_0}$ which
satisfies~:
$$
\eta_0(\theta_i) \;-\; i\,\gamma_0(\theta_i) \;\geq\; 0\qquad,\quad\forall\; i\;\in\;\NN\ .
$$
The sequence $(\theta_i)_{i\in \NN}$ lives in a compact set.
Thus we can extract a convergent subsequence $(\theta_{i_\ell})_{\ell\in \NN}$
which converges to a point denoted $\theta_\infty$.

As  the functions $\eta_0$ and $\gamma_0$ are bounded on
$S_{r_0}$ and $\gamma_0$ takes non-negative values\footnote{%
%
Indeed, if we had $\gamma _0(x) <0$ for some $x$ in
$\RR^n\setminus\{0\}$, by letting $\epsilon = -\frac{\gamma
_0(x)}{2}$, the homogeneity in the $0$-limit of $\gamma $ would give
a real number $\lambda > 0 $ satisfying
$ \frac{\gamma(\lambda^{r_0}\diamond x)}{\lambda^{d_0}} \leq
\gamma_0(x)+ \epsilon = \frac{\gamma
_0(x)}{2} <0 $. This contradicts the fact that $\gamma$ takes nonnegative
values only. Also by continuity we have $\gamma _0(0) \geq 0$.
%
},
$\gamma_0(\theta_{i_\ell})$ must go to $0$
as $i_\ell$ goes to infinity.
Since the functions $\eta_0$ and $\gamma_0$ are continuous, we
get $\gamma_0(\theta_\infty)\,=\,0$ and $\eta_0(\theta_\infty) \,\geq\, 0$,
which is impossible.
Consequently, there exist $c_0$ and $\varepsilon _0>0$ such that~:
\begin{equation}
\label{LP3}
\eta_0(\theta) \;-\; c\,\gamma_0(\theta) \;\leq \; -\varepsilon _0\;
<\; 0\qquad \forall \theta\in S_{r_0}
\  ,\quad
\forall c\,\geq\,c_0\ .
\end{equation}
Moreover, since the functions $\eta_0$ and $\gamma_0$ are homogeneous in the standard sense
(see
Remark \ref{LP12}), we have
 the second inequality in (\ref{b53}).

Following the same argument, we can find positive real numbers $c_\infty$
and $\varepsilon _\infty $ such that~:
\begin{equation}
\label{LP4}
\eta_\infty(\theta) \;-\; c\,\gamma_\infty(\theta) \;<\; -\varepsilon _\infty
\qquad \forall \theta\in S_{r_\infty}
\  ,\quad
\forall c\,\geq\,c_\infty
\end{equation}
and the third inequality in (\ref{b53}) holds.

In the following,  let~:
$$
c_1\;=\; \max\{c_0,c_\infty \}
\quad ,\qquad
\varepsilon _1\;=\; \min\{\varepsilon _0,\varepsilon _\infty \}
\  .
$$
\item
Since $\eta$ and $\gamma $ are
homogeneous in the $0 $-limit,
there exists $\lambda _0$ such that,
for all $\lambda \in (0,\lambda_{0}]$
and all $\theta\,\in\, S_{r_0}$,  we have~:
$$
\eta(\lambda^{r_0} \diamond \theta)
\; \leq \;
\lambda^{d_0}\,  \eta _0(\theta )
\;+\;
\lambda^{d_0}\, \frac{\varepsilon _1}{4}
\  ,\quad
\lambda^{d_0}\,  \gamma  _0(\theta )
\;-\;
\lambda^{d_0}\, \frac{\varepsilon _1}{4c_1}
\; \leq \;
\gamma (\lambda^{r_0} \diamond \theta)
\  ,
$$
which gives readily
$$
\eta(\lambda^{r_0} \diamond \theta) \;-\; c_1\,\gamma(\lambda^{r_0}
\diamond \theta) \; \leq  \;
\lambda^{d_0}\,  \eta _0(\theta )
\;+\;
\lambda^{d_0}\, \frac{\varepsilon _1}{2} - c_1\lambda^{d_0}\,  \gamma  _0(\theta )
\ .
$$
Using (\ref{LP3}), we get
$$
\eta(\lambda^{r_0} \diamond \theta) \;-\; c_1\,\gamma(\lambda^{r_0}
\diamond \theta) \; \leq \;
-\lambda^{d_0}\, \frac{\varepsilon _1}{2}
\qquad
\forall \lambda \in (0,\lambda_{0}]\; ,\  \forall\theta\,\in\,
S_{r_0}
\ ,
$$
and therefore, since
$\gamma $ takes non negative values,
$$
\eta(\lambda^{r_0} \diamond \theta)  - c\,\gamma(\lambda^{r_0}
\diamond \theta) \,\leq \, -\lambda ^{d_0}\frac{\varepsilon _1}{2} \ ,
\quad
\forall \lambda \in (0,\lambda_{0}]\; ,\  \forall\theta\,\in\,
S_{r_0}
\; ,\
\forall c\geq c_1
\ .
$$
Similarly,
there exists $\lambda_\infty $ satisfying~:
$$
\eta(\lambda^{r_\infty} \diamond \theta)  - c\,\gamma(\lambda^{r_\infty}
\diamond \theta) \,\leq \, -\lambda ^{d_\infty}\frac{\varepsilon _1}{2}
\quad
\forall \lambda \in [\lambda_\infty,+\infty)\; ,\  \forall\theta\,\in\,
S_{r_\infty }
\; ,\
\forall c\geq c_1
\  .
$$
Consequently, for each $c\,\geq \,c_1$,  the set
$$\{x \,\in\,\RR^n \setminus\{0\}\;|\; \eta(x) \;-\; c\,\gamma(x) \;\geq\; 0 \} \ ,$$
if not empty, must be a subset of
$$
C \;=\; \left\{x \,\in\,\RR^n \;:\;    |x|_{r_0}\;\geq\;
\lambda_0 \right\}
\; \bigcup\;
\left\{x \,\in\,\RR^n \;:\;  |x|_{r_\infty}\;\leq\; \lambda_\infty \right\}
\  .
$$
which is compact and does not contain the origin.

\item Suppose now that for all $c$ the first inequality in (\ref{b53}) is not true,
this means that, for all integer $c$ larger then $c_1$
there exists $x_c$ in $\RR^n$ satisfying~:
$$
\eta(x_c) \;-\; c\,\gamma(x_c) \;\geq\; 0
$$
and therefore $x_c$ is in $C$. Since $C$ is a compact set, there is a
convergent subsequence $(x_{c_\ell})_{\ell\in\NN}$ which converges to a
point denoted $x^*$ different from zero. And as above, we must have
$\gamma (x^*)=0$ and $\eta (x^*)\geq 0$. But this contradicts the
assumption, namely
$$
\left\{\  x\in \RR^n\setminus\{0\}\  ,\quad \gamma (x) = 0 \  \right\} \qquad
\Rightarrow \qquad \eta(x) < 0\ .
$$
\end{enumerate}

\section{Proof of Proposition \ref{prop2}}
\label{c14}
Because the vector field $f$ is homogeneous in the $\infty$-limit, its approximating
vector field $f_\infty$ is homogeneous in the standard sense (see Remark \ref{LP12}).
Let $d_{V_\infty}$ be a positive real number larger than $r_{\infty,i}$, for all $i$ in $\{1, \dots, n\}$.
Following Rosier \cite{Rosier}, there exists a $C^1$, positive
definite, proper and
homogeneous function $V_\infty~:\RR^n\rightarrow\RR_+$, with weight
$r_\infty$ and degree $d_{V_\infty}$, satisfying~:
\begin{equation}\label{b17}
\frac{\partial V_\infty}{\partial x}(x)f_\infty(x) \;<\; 0\qquad,\quad \forall \;x\;\neq\;0\ .
\end{equation}
From Point P1 in Section \ref{b55}, we know that the function $x\mapsto\frac{\partial V_\infty}{\partial x}(x)f(x)$
is homogeneous  in the $\infty$-limit with associated triple $\left
(r_\infty,\dr_\infty+d_{V_\infty},\frac{\partial V_\infty}{\partial
x}(x)f_\infty(x) \right)$.
Let
$$
\epsilon_\infty\;=\; -\frac{1}{2}\max_{\theta\,\in\,S_{r_\infty}} \left\{ \frac{\partial V_\infty}{\partial x}(\theta)f_\infty(\theta)\right\}\ ,
$$
and note that, by inequality (\ref{b17}), $\epsilon_\infty$ is a strictly positive real number.
By definition of homogeneity  in the $\infty$-limit, there exists
$\lambda_\infty$ such that~:
$$
\left|\frac{\frac{\partial V_\infty}{\partial x}(\lambda^{r_\infty}\diamond \theta)f(\lambda^{r_\infty}\diamond \theta)}{\lambda^{d_{V_\infty}+\dr_\infty} } -
\frac{\partial V_\infty}{\partial x}(\theta)f_\infty(\theta)\right|
\;\leq\;\epsilon_\infty
\qquad \forall \theta\in S_{r_\infty}
\; ,\
\forall \lambda \geq \lambda_\infty
\ .
$$
This yields~:
\begin{eqnarray*}
\frac{\partial V_\infty}{\partial x}(\lambda^{r_\infty}\diamond \theta)f(\lambda^{r_\infty}\diamond \theta)
&\leq&  \lambda^{d_{V_\infty}+\dr_\infty} \left(\frac{\partial V_\infty}{\partial x}(\theta)f_\infty(\theta)\,+\,\epsilon_\infty\right)\ ,\\
&\leq&  -\,\lambda^{d_{V_\infty}+\dr_\infty} \, \epsilon_\infty
\qquad \forall \theta\in S_{r_\infty}
\; ,\
\forall \lambda \geq \lambda_\infty
\end{eqnarray*}
or in other words~:
\begin{equation}\label{b15}
\frac{\partial V_\infty}{\partial x}(x)\,f(x)\,<\,0\quad ,\qquad
\qquad \forall \,  x\: : |x|_{r_\infty}\,\geq\,\lambda_\infty\ .
\end{equation}
This establishes global asymptotic stability of the compact set~:
$$
\CR_\infty\;=\;\{x\,:\,V_\infty(x)\;\leq\;v_\infty\}\ ,
$$
where $v_\infty $ is given by~:
$$
v_\infty\;=\;\max_{|x|_{r_\infty}\,=\,\lambda_\infty} \{ V_{\infty}(x)\}
\  .
$$

\section{Proof of Theorem \ref{b26}}
\label{c13}
The proof is divided in three steps.
First, we define three Lyapunov functions $V_0$, $V_m$ and $V_\infty$.
Then we build another Lyapunov function $V$ from these three ones.
Finally we show that its derivative along the trajectories of the system
(\ref{c19}) and its homogeneous approximations are negative definite.
\begin{enumerate}
\item
As established in the proof of Proposition \ref{prop2}, there exist
a positive real number $\lambda_\infty$ and a $C^1$ positive definite,
proper and homogeneous function
$V_\infty~:\RR^n\rightarrow\RR_+$, with weight $r_\infty$ and degree
$d_{V_\infty}$ satisfying (\ref{b15}).
Similarly, there exist a number $\lambda_0>0$ and a $C^1$ positive definite, proper and
homogeneous function $V_0~:\RR^n\rightarrow\RR_+$, with weight $r_0$ and degree $d_{V_0}$,
satisfying~:
\begin{equation}\label{b22}
\frac{\partial V_0}{\partial x}(x)\,f(x)\,<\,0\quad ,\qquad
\forall\,x\;:\;0\,  <\,  |x|_{r_0}\,\leq\,\lambda_0 \ .
\end{equation}
Finally, global asymptotic stability of the origin of the system $\dot x = f(x)$ implies the
existence of a $C^1$, positive definite  and proper  function $ V_m~:\RR^n\rightarrow\RR_+$
satisfying~:
\begin{equation}\label{b50}
\frac{\partial V_m}{\partial x}(x)\,f(x)\;<\;0 \quad, \qquad \forall \, x\,\neq\,0\ .
\end{equation}
\item
Now we build a function $V$ from the functions
$V_m$, $V_\infty$ and $V_0$. For this, we follow a technique used by
Mazenc in \cite{Mazenc}
(see also \cite{liu-chitour-sontag}).
Let $v_\infty$  and $v_0$ be two strictly positive real numbers such that
$v_0<v_\infty $ and
$$
v_\infty\;\geq\;\max _{x:\,  |x|_{r_\infty}\,\leq\,\lambda_\infty} V_m(x)
\quad,\qquad v_0\;\leq\;\min _{x:\,  |x|_{r_0}\,\geq\,\lambda_0} V_m(x)
\ .
$$
This implies~:
\begin{eqnarray*}
\{x\in\RR^n\: :\; V_m(x)\,\geq\, v_\infty\}&\; \subseteq\; &
\{x\in\RR^n\: :\; |x|_{r_\infty}\,\geq\,\lambda_\infty\}
\  ,
\\
\{x\in\RR^n\: :\; V_m(x)\,\leq\, v_0\}&\; \subseteq\; &
\{x\in\RR^n\: :\; |x|_{r_0}\,\leq\,\lambda_0\}
\  .
\end{eqnarray*}
Let $\omega_0$ and $\omega_\infty$ be defined as~:
$$
\omega_0\;=\;\min_{x\,:\,\frac{1}{2}\,v_0 \leq V_m (x)\leq v_0 }\frac{V_m(x)}{V_0 (x)}
\quad,\qquad
\omega_\infty \;=\;\max_{x\,:\,v_\infty \leq V_m (x)\leq 2\,v_\infty }\frac{V_m(x)}{V_\infty (x)}
\ .
$$
We have~:
\begin{eqnarray*}
\omega_\infty \,V_\infty (x)\;-\;V_m(x) &\geq &0 \qquad,\quad \forall \,x\;:\; v_\infty \leq V_m (x)\leq 2\,v_\infty\ , \\
V_m(x) \;-\; \omega_0 \,V_0 (x)  &\geq & 0 \qquad,\quad \forall \,x\;:\;\,\frac{1}{2}\,v_0 \leq V_m (x)\leq v_0
\ .
\end{eqnarray*}

Let~:
\\[1em]\noindent$\displaystyle
V(x) \; = \; \omega_\infty \, \varphi_\infty (V_m (x))V_\infty(x) +$\hfill\null\\[1em]
\null\hfill$
\left[1 - \varphi_\infty (V_m(x))\right]\varphi_0 (V_m (x))\,V_m(x) + \omega_0 \,\left[1- \varphi_0 (V_m (x))\right] V_0(x)
$\\[1em]
where $\varphi_0 $ and $\varphi_\infty $ are $C^1$ non decreasing functions satisfying~:
\begin{eqnarray}\label{b23}
\varphi_0 (s) \; = \; 0 \qquad \forall \, s \leq \frac{1}{2}\,v_0
\quad & , &\qquad
\varphi_0 (s) \; = \; 1 \qquad \forall \, s \geq  v_0  \ .
\\\label{b24}
\varphi_\infty (s) \; = \; 0 \qquad \forall \, s \leq v_\infty
\quad& , &\qquad
\varphi_\infty (s) \; = \; 1 \qquad \forall \, s \geq 2 v_\infty  \ .
\end{eqnarray}
Then  $V$ is  $C^1$, positive definite and proper.
Moreover, by construction~:
$$
V (x)
=\left\{
\begin{array}{l@{\hskip -2cm}l}
\omega_0\,V_0(x)
   & \forall x\,:\, V_m(x)\,\leq\,\frac{1}{2}v_0\ ,
\\[0.5em]
\varphi_0 (V_m (x))\,V_m(x)+\omega_0 \,\left[1- \varphi_0 (V_m (x))\right] V_0(x)
\\
   & \forall x\,:\, \frac{1}{2}v_0\,\leq\,V_m(x)\,\leq\,v_0\ ,
\\[0.5em]
V_m(x)
   & \forall x\,:\, v_0\,\leq\,V_m(x)\,\leq\,v_\infty\ ,
\\[0.5em]
\omega_\infty \, \varphi_\infty (V_m (x))V_\infty(x)
+ \left[1 - \varphi_\infty (V_m(x))\right]\,V_m(x)
\\
   & \forall x\,:\, v_\infty\,\leq\,V_m(x)\,\leq\,2\,v_\infty\ ,
\\[0.5em]
\omega_\infty\,V_\infty(x)
   & \forall x\,:\, V_m(x)\,\geq\,2\,v_\infty\ .
\end{array}\right.
$$

Thus for each $i$ in $\{1, \dots, n\}$~:
\begin{equation}\label{c47}
\frac{\partial V}{\partial x_i} (x) \;=\; \omega_\infty\,\frac{\partial V_\infty}{\partial x_i}(x)\quad,\qquad \forall x\,:\, V_m(x)\,>\,2\,v_\infty\ ,
\end{equation}
and
\begin{equation}\label{c46}
\frac{\partial V}{\partial x_i} (x) \;=\; \omega_0\,\frac{\partial V_0}{\partial x_i}(x)\quad,\qquad \forall x\,:\, V_m(x)\,<\,\frac{1}{2}v_0\ .
\end{equation}
Since $\frac{\partial V_\infty}{\partial x_i}$ and $\frac{\partial
V_0}{\partial x_i}$ are homogeneous in the standard sense, this
proves that for each $i$ in $\{1, \dots, n\}$,  $\frac{\partial V}{\partial x_i}$
is homogeneous in the bi-limit, with weights $r_0$ and $r_\infty $ and degrees $d_{V_0}-r_{0,i}$ and
$d_{V_\infty}-r_{\infty,i}$.
\item It remains to show that the Lie derivative of $V$ along $f$ is negative definite.
To this end note that,  for all $x$ such that
$\frac{1}{2}v_0\,\leq\,V_m(x)\,\leq\,v_0$,
\\[1em]$\displaystyle
\frac{\partial V}{\partial x} (x)f(x)
\;=\;\varphi_0'(V_m (x))\,[V_m(x) - \omega_0 \,V_0(x)]
\frac{\partial V_m}{\partial x}(x) f(x)
$\hfill\null\\
$\null\hfill\displaystyle
+ \omega_0\left[1- \varphi_0 (V_m (x))\right]
\frac{\partial V_0}{\partial x} (x) f(x)+
\varphi_0(V_m (x))\frac{\partial V_m}{\partial x} (x)f(x)
$\\[1em]
and, for all $x$ such that $v_\infty\,\leq\,V_m(x)\,\leq\,2\,v_\infty$,
\\[1em]$\displaystyle
\frac{\partial V}{\partial x} (x)f(x)
\;=\;\varphi_\infty'(V_m (x))\,[\omega_\infty \,V_\infty(x) - V_m(x)]
\frac{\partial V_m}{\partial x} (x) f(x)
$\hfill\null\\
$\null\hfill\displaystyle
+ \omega_\infty\,\varphi_\infty (V_m (x))\frac{\partial V_\infty}{\partial x}(x) f(x)
+  \left[1- \varphi_\infty (V_m (x))\right]\frac{\partial V_m}{\partial x} (x) f(x)
$\\[1em]
By (\ref{b15}), (\ref{b22}), (\ref{b50}), (\ref{b23}) and
(\ref{b24}), these inequalities  imply~:
$$
\frac{\partial V}{\partial x}(x)\,f(x) \;<\;0 \quad,\qquad \forall\, x\;\neq\;0\ .
$$
which proves the claim.
\end{enumerate}

\section{Proof of Corollary \ref{b51}}
\label{c03}
Recall equation (\ref{c41}) and consider the functions $\eta _1~:\RR^n\times\RR^m\rightarrow \RR$ and
$\gamma _1~:\RR^n\times\RR^m\rightarrow \RR_+$ defined as~:
$$
\eta _1(x,\delta)\,  =\,   \frac{\partial V}{\partial x}(x)\,[f(x,\,\delta) - \frac{1}{2}\,f(x,\,0)]
\  ,\quad
\gamma _1(x,\delta)
\,  =\,     \sum_{j=1}^m
\mathfrak{H}\left(|\delta_j|^\frac{d_{V_0}+\dr_0}{\mathfrak{r}_{0,j}}, |\delta_j|^\frac{d_{V_\infty}+\dr_\infty}{\mathfrak{r}_{\infty,j}}\right)
\: .
$$
These functions are homogeneous in the bi-limit with weights $r_{0}$ and
$r_{\infty}$ for $x$ and
$\mathfrak{r}_{0}$ and $\mathfrak{r}_{\infty}$ for $\delta$ and degrees $d_{V_0} + \dr_0$ and $d_{V_\infty} + \dr_\infty$.
Since the function $x\mapsto \frac{\partial V}{\partial x}(x)\,f(x,0) $ is negative definite, then~:
$$
\{(x,\delta) \in \RR^{n+m}\setminus\{0\}\: :\;  \gamma _1(x,\delta)\;=\;0\}
\quad \subseteq\quad \{(x,\delta) \in \RR^{n+m}\: :\; \eta _1(x,\delta)\;<\;0\}\ .
$$
Moreover, since the homogeneous approximations
of $\eta $
is negative definite, we get~:
\begin{eqnarray*}
\{(x,\delta) \in \RR^{n+m}\setminus\{0\}\: :\; \gamma
_{1,0}(x,\delta)\;=\;0\}&\quad  \subseteq\quad  &
\{(x,\delta) \in \RR^{n+m}\: :\;
\eta _{1,0}(x,\delta)\;<\;0\}\\
\{(x,\delta) \in \RR^{n+m}\setminus\{0\}\: :\;  \gamma
_{1,\infty}(x,\delta)\;=\;0\}&\quad  \subseteq\quad  &
\{(x,\delta) \in \RR^{n+m}\: :\;
\eta _{1,\infty}(x,\delta)\;<\;0\}
\end{eqnarray*}
Hence, by Lemma \ref{3}, there exists a positive real number
 $c_\delta$ such that~:
\begin{equation}\label{c51}
\frac{\partial V}{\partial x}(x)\,\left[f(x,\,\delta) - \frac{1}{2}\,f(x,\,0)\right] \;\leq\;
c_\delta\,\sum_{j=1}^m\,
\mathfrak{H}\left(|\delta_j|^\frac{d_{V_0}+\dr_0}{\mathfrak{r}_{0,j}}, |\delta_j|^\frac{d_{V_\infty}+\dr_\infty}{\mathfrak{r}_{\infty,j}}\right)\  .
\end{equation}
Consider now the functions $\eta _2~:\RR^n\rightarrow\RR_+$ and $\gamma _2~:\RR^n\rightarrow\RR_+$ defined as~:
$$\eta _2(x)\,=\,
\mathfrak{H}\left(V(x)^{\frac{d_{V_0}+\dr_0}{d_{V_0}}}, V(x)^\frac{d_{V_\infty}+\dr_\infty}{d_{V_\infty}}\right)
\quad,\qquad
\gamma _2(x)=-\frac{1}{2}\,\frac{\partial V}{\partial x}(x)\,f(x,0)\ .$$
They are homogeneous in the bi-limit with weights $r_{0}$ and
$r_{\infty}$ and degrees $d_{V_0}+\dr_0$ and $d_{V_\infty}+\dr_\infty$.
Since $\gamma _2$ and its homogeneous approximations are positive definite, by Corollary \ref{b25},
there exists a positive real number
$c_V$ such that~:
\begin{equation}\label{c52}
\frac{1}{2}\,\frac{\partial V}{\partial x}(x)\,f(x,0) \;\leq\;
-c_V\,\mathfrak{H}\left(V(x)^{\frac{d_{V_0}+\dr_0}{d_{V_0}}}, V(x)^\frac{d_{V_\infty}+\dr_\infty}{d_{V_\infty}}\right)\ .
\end{equation}
The two inequalities (\ref{c51}) and (\ref{c52}) yield the claim.

\section{Proof of Corollary \ref{b36}}
\label{c04}
Let $d_{V_0}$ and $d_{V_\infty}$ be such that the assumption of Theorem \ref{b26} holds.
For each $i$ in $\{1,\ldots,m\}$,
let $\mu _i~:\RR_+\rightarrow\RR_+$ be the strictly increasing function
defined as (see  (\ref{c41}))~:
\begin{equation}
\label{LP15}
\mu_i(s)\;=\;
\mathfrak{H}\left(s^{q_i},s^{p_i}
\right)
\  ,
\end{equation}
where~:
$$
p_i=\frac{\dr_\infty+d_{V_\infty}}{\mathfrak{r}_{\infty,i}}
\quad ,\qquad
q_i=\frac{\dr_0+d_{V_0}}{\mathfrak{r}_{0,i}}
\  .
$$
%
We first prove that
the inequality given by  Corollary \ref{b51}\  implies
that the system (\ref{c26}), with $\delta $ as input and $x$ as
output is input-to-state stable with
a linear
gain between $\sum_{i=1}^m\mu_i(|\delta _i|)$ and
$\mathfrak{H}\left(|x|_{r_0}^{\dr_0+d_{V_0}},|x|_{r_\infty}^{\dr_\infty+d_{V_\infty}}\right)$.
To do so we introduce the
function $\alpha~:\RR_+\rightarrow\RR_+$ as~:
$$
\alpha(s)\;=\;\mathfrak{H}\left(s^\frac{\dr_0+d_{V_0}}{d_{V_0}},s^\frac{\dr_\infty+d_{V_\infty}}{d_{V_\infty}}\right)\quad,
\qquad s\,\geq\, 0
\ .
$$
This function is a bijection, strictly increasing, and homogeneous in the bi-limit with approximating functions $s^\frac{d_{V_0}+\dr_0}{d_{V_0}}$
and $s^\frac{d_{V_\infty}+\dr_\infty}{d_{V_\infty}}$.
Moreover, from Proposition \ref{prop1}, the function $x\mapsto \alpha(V(x))$ is
 positive definite and homogeneous in the bi-limit with associated weights $r_0$ and $r_\infty$ and degrees $\dr_0+d_{V_0}$ and $\dr_\infty+d_{V_\infty}$.
Moreover its approximating homogeneous functions $V_0(x)^\frac{d_{V_0}+\dr_0}{d_{V_0}}$ and $V_\infty(x)^\frac{d_{V_\infty}+\dr_\infty}{d_{V_\infty}}$ are positive definite as well.
Hence, we get from Corollary \ref{b25} the existence of a positive real number $c_1$ satisfying~:
\begin{equation}
\label{LP14}
\mathfrak{H}\left(|x|_{r_0}^{\dr_0+d_{V_0}},|x|_{r_\infty}^{\dr_\infty+d_{V_\infty}}\right) \;\leq\;
c_1\, \alpha(V(x))\quad,\qquad \forall \;x\;\in\;\RR^n\ .
\end{equation}
On the other hand,
from inequality (\ref{b98}) in Corollary \ref{b51}, we have
the property~:\\[0.5em]
$\displaystyle
\left\{(x,\delta)\,\in\,\RR^n\times\RR^m\: :\;
\alpha (V(x))\; \geq \; 2\,  \frac{c_\delta }{c_V}\,  \sum_{i=1}^m\mu_i(|\delta _i|)\right\}
$\\[0.1em]
\refstepcounter{equation}\label{LP13}(\theequation)
\null\hfill$\displaystyle\subseteq \left\{(x,\delta)\,\in\,\RR^n\times\RR^m\: :\;
\frac{\partial V}{\partial x}(x)\,f(x,\delta)
\; \leq\;   -\frac{c _V}{2}\,\alpha(V(x))\right\}\ .
$
\par\vspace{1em}\noindent
In the following, let
$t\in [0,T)\mapsto (x(t),\delta (t),z(t))$, be any
function which satisfies (\ref{c26}) on $[0,T)$
and (\ref{LP11}) and (\ref{b96}) for all $0\leq s\leq t\leq T$.
From \cite{Sontag-Wang}, we know the inclusion (\ref{LP13}) implies the
existence of
a class
$\KR\LR$ function $\beta _V$ such that, for all $0\leq s\leq
t\leq T$,
\begin{equation}\label{b95}\null \qquad
V(x(t)) \,\leq\, \max\left\{
\beta _V(V(x(s)),t-s)\,  ,\,  \sup_{s\leq \kappa \leq t}
\left\{\alpha^{-1}\left(\frac{2c_\delta }{c_V}\sum_{j=1}^m\, \mu _j(|\delta_j(\kappa)|)
\right)\right\}\right\}
\end{equation}
With $\alpha$ acting on both sides of inequality (\ref{b95}),
(\ref{LP14}) gives,
for all $0\leq s\leq t\leq T$,
\\[0.5em]
$\displaystyle
\mathfrak{H}\left(|x(t)|_{r_0}^{\dr_0+d_{V_0}},|x(t)|_{r_\infty}^{\dr_\infty+d_{V_\infty}}\right)
$\hfill\null\\[0.3em]
\refstepcounter{equation}(\theequation)\label{c43}\null\hfill$\displaystyle
\leq\;\max\left\{
c _1\,\alpha\circ\beta _V(V(x(s)),t-s)\,  ,\,  \frac{2c_1c_\delta }{c_V}\,
\sup_{s\leq \kappa \leq t}
\left\{\sum_{j=1}^m\, \mu _j(|\delta_j(\kappa)|)\right\}
\right\}\  .
$\\[0.5em]
This is the linear gain property required.
To conclude the proof it remains to show the existence of $c_G$ such that a small gain
property is satisfied.

First,  note that the function $x\mapsto \mathfrak{H}\left(|x|_{r_0}^{\dr_0+d_{V_0}},|x|_{r_\infty}^{\dr_\infty+d_{V_\infty}}\right)$ is positive definite and homogeneous in the bi-limit with weights $r_0$ and $r_\infty$,
and degrees $\dr_0+d_{V_0}$ and $\dr_\infty+d_{V_\infty}$.
By Proposition \ref{prop1}, for $i$ in $\{1, \dots, m\}$ the same holds with the function
$x\mapsto \mu _i\left(\mathfrak{H}\left(|x|_{r_0}^{\mathfrak{r}_{0,i}},|x|_{r_\infty}^{\mathfrak{r}_{\infty,i}}\right)
\right)$.
Hence, by Corollary \ref{b25}, there exists a positive real number $c_2$
satisfying~:
\begin{equation}\label{c42}
\mu _i\left(
\mathfrak{H}\left(|x|_{r_0}^{\mathfrak{r}_{0,i}},|x|_{r_\infty}^{\mathfrak{r}_{\infty,i}}\right)
\right)\;\leq\;
c_2\,\mathfrak{H}\left(|x|_{r_0}^{\dr_0+d_{V_0}},|x|_{r_\infty}^{\dr_\infty+d_{V_\infty}}\right)
\quad \forall \;x \in\RR^n\ .
\end{equation}
Let $C_i$ for $i$ in $\{1, \dots, m\}$ be the class $\KR _\infty$ functions defined as
$$
C_i(c)=\max\{c^{q_i}, c^{p_i}\} +c^\frac{p_iq_i}{q_i+p_i}+c^{p_i+q_i}
\  .
$$
From (\ref{LP15}), we get, for each $s>0$ and $c>0$,
$$
\frac{\mu_i(c s)}{\mu_i(s)} \;=\; c^{q_i} \frac{(1+s^{q_i})(1+c^{p_i} s^{p_i})}{(1+s^{p_i})(1+c^{q_i}s^{q_i})}\\
\;\leq\; c^{q_i}
\left[\frac{1+c^{p_i} s^{p_i+q_i}}{1+c^{q_i} s^{p_i+q_i}}
+ \frac{s^{q_i}}{1+c^{q_i}s^{q_i+p_i}}
+\frac{c^{p_i} s^{p_i}}{1+s^{p_i}}\right]\ .
$$
where~:
$$
c^{q_i}
\frac{1+c^{p_i} s^{p_i+q_i}}{1+c^{q_i} s^{p_i+q_i}}
\leq \max\{c^{q_i}, c^{{p_i}}\}
\  ,\quad
\frac{c^{q_i}s^{q_i}}{1+c^{q_i}s^{q_i+p_i}}
\leq c^\frac{p_iq_i}{q_i+p_i}
\  ,\quad
\frac{c^{q_i}c^{p_i} s^{p_i}}{1+s^{p_i}}
\leq c^{p_i+q_i}
\  .
$$
Hence, by continuity at $0$, we have~:
\begin{equation}\label{c40}
\mu_i(c\,s)\;\leq\; C_i(c)\,\mu_i(s)
\qquad \forall (c,s)\in \RR_+^2\ .
\end{equation}
Consider the positive real numbers $c_1$, $c_2$, $c_\delta$ and $c_V$
previously introduced, and select $c_G$ in $\RR_+$ satisfying~:
\begin{equation}\label{c39}
c _G\; <\; \min_{1\leq i\leq m}C _i^{-1}\left(\frac{c_V}{2\,m\,c_1\,c_2\,c_\delta}\right)
\ .
\end{equation}
To show that such a selection for $c_G$ is appropriate, observe
that by
(\ref{c42}) and (\ref{c40}) and $\mu _i$ acting on both sides of the inequality
(\ref{b96}), we get for each $i$ in $\{1, \dots, m\}$ and
all $0\leq s \leq t \leq T$,
\\[0.5em]
$\displaystyle
\mu _i(|\delta _i(t)|)
\,\leq\, \max\Big\{\mu_i\circ\beta_\delta(|z(s)|, t-s)\,  ,\,
$\hfill\null\\\null\hfill$\displaystyle
C _i(c _G)\,c_2\; \sup_{s\leq \kappa \leq t}  \left\{\,\mathfrak{H}\left(|x(\kappa)|_{r_0}^{\dr_0+d_{V_0}},|x(\kappa)|_{r_\infty}^{\dr_\infty+d_{V_\infty}}\right)
\right\}\Big\}\ .
$\\[0.5em]
Consequently~:
\\[0.5em]
$\displaystyle
\sum_{i=1}^m\mu_i(|\delta _i(t)|)
\,\leq\,  \max\Big\{m\,  \max_{1\leq i \leq
m}\{\mu_i\circ\beta_\delta (|z(s)|, t-s)\}\,  ,\,
$\hfill\null\\
\refstepcounter{equation}\label{LP16}(\theequation)\null\hfill$
\left(m\max_{1\leq i\leq m}C_i(c _G)\,c_2\right)\,\sup_{s\leq \kappa \leq t}  \left\{\,\mathfrak{H}\left(|x(\kappa)|_{r_0}^{\dr_0+d_{V_0}},|x(\kappa)|_{r_\infty}^{\dr_\infty+d_{V_\infty}}\right)\right\}\Big\}
$\ .\\[0.5em]
Since (\ref{c39}) yields~:
$$
\frac{2c_1c_\delta }{c_V}\,  m\max_{1\leq i\leq m}C_i(c _G)\,c_2\; <\; 1
\ ,
$$
the existence of the function $\beta_x$ follows
from (\ref{LP11}), (\ref{c43}), (\ref{LP16})
and the (proof of the) small gain
Theorem \cite{Jiang-Teel-Praly}.

\section{Proof of Corollary \ref{b64}}
\label{c05}
To begin with observe that the
continuity of $f_0$, at least, on $\RR^n\setminus\{0\}$ implies~:
$$
|\dr_0|\;=\; -\dr_0\; \leq \; \min_{1\leq i\leq n}r_{0,i}\; \leq \; \max_{1\leq i\leq n}r_{0,i}
\; <\; d_{V_0}
\  .
$$
Then, let $V$ be the function given in Theorem \ref{b26}
and, since $\dr_0 < 0 < \dr_\infty$, the function
$\phi(x)=V(x)^\frac{d_{V_0}+\dr_0}{d_{V_0}} \;+\; V(x)^\frac{d_{V_\infty}+\dr_\infty}{d_{V_\infty}}$
is homogeneous in the bi-limit with weights $r_0$ and $r_\infty$, degrees $d_{V_0}+\dr_0$ and $d_{V_\infty}+\dr_\infty$ and approximating functions $V(x)^\frac{d_{V_0}+\dr_0}{d_{V_0}}$ and  $V(x)^\frac{d_{V_\infty}+\dr_\infty}{d_{V_\infty}}$.
Moreover, the function $\zeta(x)\,=\,-\frac{\partial V}{\partial x}(x)\,f(x)$ is homogeneous in the bi-limit with the same weights and degrees as $\phi$.
Furthermore, since the function $\zeta$ and its homogeneous approximations are positive definite,
Corollary \ref{b25} yields a strictly positive real number $c$
such that~:
\begin{equation}
\frac{\partial V}{\partial x}(x)\,f(x) \;\leq\;
-c\,\left(V(x)^\frac{d_{V_0}+\dr_0}{d_{V_0}} \;+\; V(x)^\frac{d_{V_\infty}+\dr_\infty}{d_{V_\infty}}\right)
\qquad \forall x\in\RR^n\ .
\end{equation}
Let $x_{ic}$ in $\RR^n\setminus\{0\}$ be the initial condition of a solution of the system $\dot x = f(x)$, and
$V_{x_{ic}}~:\RR_+\rightarrow\RR_+$ be the function of time given by the
evaluation of $V$ along this solution.
Then~:
$$
\dot {\overparen{V_{x_{ic}}(t)}}\;\leq\; -c\,
V_{x_{ic}}(t)^\frac{d_{V_\infty}+\dr_\infty}{d_{V_\infty}}
\qquad \forall t\geq 0
\  ,
$$
from which we get~:
$$
V_{x_{ic}}(t)
\; \leq \;
\frac{1}{
\left(
\frac{\dr_\infty }{d_{V_\infty}}\,  ct
+V(x_{ic})^{-\frac{\dr_\infty}{d_{V_\infty}}}
\right)^{\frac{d_{V_\infty}}{\dr_\infty}}
}
\; \leq \;
\frac{1}{
\left(
\frac{\dr_\infty }{d_{V_\infty}}\,  ct
\right)^{\frac{d_{V_\infty}}{\dr_\infty}}
}
\qquad \forall t> 0
\  .
$$
Therefore, setting $
T_1\;=\; \frac{d_{V_\infty}}{c\dr_\infty }
$,
we have~:
$$V_{x_{ic}}(t)\;\leq\;1\qquad \forall t\geq T_1
\quad ,\qquad
\forall x_{ic} \in \RR^n\ ,$$
and~:
$$
\dot {\overparen{V_{x_{ic}}(t)}}\;\leq\;
-c\,V_{x_{ic}}(t)^\frac{d_{V_0}-|\dr_0|}{d_{V_0}}
\qquad \forall t\geq 0
\  .
$$
As a result, we get~:
\begin{eqnarray*}
V_{x_{ic}}(t)& \leq&
\max\left\{
\left(-\frac{|\dr_0|}{d_{V_0}}\,  c(t-T_1)+V_{x_{ic}}(T_1)^\frac{|\dr_0|}{d_{V_0}}\right)^\frac{d_{V_0}}{|\dr_0|},0\right\}
\  ,
\\
&\leq&
\max\left\{\left(1-\frac{|\dr_0|}{d_{V_0}}\, c (t-T_1)\right)^\frac{d_{V_0}}{|\dr_0|},0\right\}
\qquad \forall t\geq T_1
\  .
\end{eqnarray*}
Therefore, setting
$
T_2\;=\;
\frac{d_{V_0}}{c|\dr_0|}
$,
yields~:
$$V_{x_{ic}}(t)\;=0\qquad \forall t\; \geq \;
T_1+T_2=
\frac{1}{c}\left(\frac{d_{V_\infty}}{\dr_\infty}+\frac{d_{V_0}}{|\dr_0|}\right)
\quad ,\qquad \forall x_{ic} \in \RR^n\ ,
$$
hence the claim.

\end{document}